 \newtheorem{thm}{Theorem}[section]
 \newtheorem{cor}[thm]{Corollary}
 \newtheorem{lem}[thm]{Lemma}
 \newtheorem{prop}[thm]{Proposition}
 \theoremstyle{definition}
 \newtheorem{defn}[thm]{Definition}
 \theoremstyle{remark}
 \newtheorem{rem}[thm]{Remark}
 \numberwithin{equation}{section}
\newcommand{\ti}{\breve{\i}}
\newcommand{\vds}{\breve{p}}
\newcommand{\N}{{\mathbb N}}
\newcommand{\R}{{\mathbb R}}
\newcommand{\Z}{{\mathbb Z}}
\newcommand{\LL}{\mathcal{L}}
\newcommand{\PP}{\mathcal{P}}
\newcommand{\ZZ}{\mathcal{Z}}
\newcommand{\id}{\operatorname{id}}
\newcommand{\im}{{\operatorname{im}}}
\newcommand{\ev}{{\operatorname{ev}}}
\newcommand{\cov}{{\operatorname{cov}}}
\newcommand{\curv}{{\operatorname{curv}}}
\newcommand{\Ad}{\operatorname{Ad}}
\newcommand{\Hom}{\operatorname{Hom}}
\newcommand{\pr}{\operatorname{pr}}
\newcommand{\Ext}{\operatorname{Ext}}
\newcommand{\Tor}{\operatorname{Tor}}
\newcommand{\dom}{\operatorname{dom}}
\newcommand{\Hdr}{H_{dR}}
\newcommand{\g}{\mathfrak{g}}
\newcommand{\Ul}{{\mathrm{U}(1)}}       
\newcommand{\OO}{\mathrm{O}}
\newcommand{\Spin}{\mathrm{Spin}}
\newcommand{\String}{\mathrm{String}}
\newcommand{\cl}{\mathrm{cl}}
\renewcommand{\phi}{\varphi}
\begin{document}
%%%%%%%%%%%%%%%%%%%%%%%%%%%%%%%%%%%%%%%%%%%%%%%%%%%%%%%%%%%%%%%%%%%%%%%%%

\title[Cheeger-Chern-Simons theory]{Cheeger-Chern-Simons theory and differential String classes}
\author[C.~Becker]{Christian Becker}
\address{%
Universit\"at Potsdam\\ 
Institut f\"ur Mathematik\\ 
Karl-Liebknecht-Str.~24--25\\ 
14476 Potsdam\\ 
Germany
}
\email{becker@math.uni-potsdam.de}

\subjclass{Primary 53C08, Secondary 55N20}

\keywords{Cheeger-Simons character, differential character, Chern-Simons theo\-ry, differential cohomology, transgression, geometric String structure, String geometry, Wess-Zumino model, Dijkgraaf-Witten correspondence, Hopf theorem}

\date{\today}

\begin{abstract}
We construct new concrete examples of relative differential characters, which we call {\em Cheeger-Chern-Simons characters}.
They combine the well-known Cheeger-Simons characters with Chern-Simons forms.
In the same way as Cheeger-Simons characters generalize Chern-Simons invariants of oriented closed manifolds, Cheeger-Chern-Simons characters generalize Chern-Simons invariants of oriented manifolds with boundary.

We study the differential cohomology of compact Lie groups $G$ and their classifying spaces $BG$.
We show that the even degree differential cohomology of $BG$ canonically splits into Cheeger-Simons characters and topologically trivial characters.
We discuss the transgression in principal $G$-bundles and in the universal bundle. 
We introduce two methods to lift the universal transgression to a differential cohomology valued map. 
They generalize the Dijkgraaf-Witten correspondence between $3$-dimensional Chern-Simons theories and Wess-Zumino-Witten terms to fully extended higher order Chern-Simons theories. 
Using these lifts, we also prove two versions of a differential Hopf theorem.

Using Cheeger-Chern-Simons characters and transgression, we introduce the notion of differential trivializations of universal characteristic classes.
It gene\-ralizes well-established notions of differential String classes to arbitrary degree. 
Specializing to the class $\frac{1}{2} p_1 \in H^4(B\Spin_n;\Z)$ we recover isomorphism classes of geometric String structures on $\Spin_n$-bundles with connection and the corresponding Spin structures on the free loop space.  

The Cheeger-Chern-Simons character associated with the class $\frac{1}{2} p_1$ together with its transgressions to loop space and higher mapping spaces defines a Chern-Simons theory, extended down to points.
Differential String classes provide trivializations of this extended Chern-Simons theory.
This setting immediately generalizes to arbitrary degree: for any universal characteristic class of principal $G$-bundles, we have an associated Cheeger-Chern-Simons character and extended Chern-Simons theory.
Differential trivialization classes yield trivializations of this extended Chern-Simons theory.
\end{abstract}

\maketitle
\tableofcontents

%%%%%%%%%%%%%%%%%%%%%%%%%%%%%%%%%%%%%%%%%%%%%%%%%%%%%%%%%%%%%%%%%%%%%%%%%
\section{Introduction}
%%%%%%%%%%%%%%%%%%%%%%%%%%%%%%%%%%%%%%%%%%%%%%%%%%%%%%%%%%%%%%%%%%%%%%%%%

The present article contributes to the program of \emph{String geometry} and its higher order generalizations.
By String geometry we understand the study of geometric structures on a smooth manifold $X$ that correspond to so-called Spin structures on the free loop space $\LL X$.

As the name suggests, the notion of String geometry originates in mathematical physics.
String structures on a manifold $X$ were introduced by Killingback \cite{K87} in the context of anomaly cancellation for the world sheet action of strings.
Like Spin structures account for supersymmetric point particles being well-defined, String structures are supposed to play an analagous role for supersymmetric strings. 

Nonlinear sigma models, i.e.~$2$-dimensional field theories on $X$, may be considered as $1$-dimensional field theories on the free loop space $\LL X$.
As in the case of the manifold $X$ itself, one needs a notion of Spin structures on the loop space $\LL X$ to formulate the fermionic part of the theory.
Such a notion was introduced in \cite{K87} and further elaborated on in \cite{CP89}.

Actually, consistency of the supersymmetric sigma model requires slightly stronger geometric structures than Spin structures on $\LL X$, so-called geometric String structures on $X$.
In \cite{ST04}, geometric String structures are defined as triviali\-zations of the extended Chern-Simons theory associated with a given principal $\Spin_n$-bundle with connection $\pi:(P,\theta) \to X$.
A geometric notion of String connections that satisfies this definition was established by Waldorf in \cite{W13} in terms of trivializations of bundle 2-gerbes with compatible connections. 
The full picture of the correspondence between Spin structures on $\LL X$ and String structures on $X$ was established recently in \cite{W14} as an equivalence of categories between (geometric) String structures on $X$ and (superficial geometric) fusion Spin structures on $\LL X$.
Geometric String structures are classified up to isomorphism by certain degree $3$ differential cohomology classes, called differential String classes.

In the present paper, we generalize the notion of (isomorphism classes of) differential String structures to arbitrary universal characteristic classes for principal $G$-bundles.
Instead of bundle (2-)gerbes we use differential characters to represent differential cohomology classes.
While the use of bundle (2-)gerbes is limited to low degrees in cohomology, our approach applies to universal characteristic classes of arbitrary degree.
We introduce a notion of differential trivializations of characteristic classes and discuss trivializations of the corresponding extended higher order Chern-Simons theories.

Now let us describe the above mentioned geometric structures in more detail.
Given a principal $\Spin_n$-bundle $\pi:P \to X$, the loop space functor yields a principal $\LL \Spin_n$-bundle $\pi:\LL P \to \LL X$.
To construct associated vector bundles, one needs nice representations of the loop group.
However, all positive energy representations of the loop group $\LL \Spin_n$ are projective \cite{PS86}.
Thus one aims at lifting the structure group of the loop bundle from $\LL \Spin_n$ to its universal central extension $\widehat{\LL \Spin_n}$.
The obstruction to such lifts is a certain cohomology class in $H^3(\LL X;\Z)$, most easily described as the transgression to loop space of the class $\frac{1}{2} p_1(P) \in H^4(X;\Z)$.
Actual bundle lifts are referred to as \emph{Spin structures} on the loop bundle.

Instead of lifts of loop bundles one may also consider trivializations of the class $\frac{1}{2}p_1(P)$ on the manifold $X$ itself.
This is the program of \emph{String geometry}.
On the one hand, $\frac{1}{2}p_1(P)$ is the obstruction to lift the structure group of $\pi:P \to X$ from $\Spin_n$ to its $3$-connected cover $\String_n$.
As is well known, $\String_n$ cannot be realized as a finite dimensional Lie group, but as an infinite dimensional Fr\'echet Lie group \cite{NSW13}.
From this perspective, String geometry may be regarded as the study of principal $\String_n$-bundles (with connection) lifting the given principal $\Spin_n$-bundle (with connection).
Actual such lifts may be called (geometric) String structures in the Lie theo\-retic sense.
On the other hand, $\frac{1}{2}p_1(P)$ is the characteristic class of a higher cate\-gorical geometric structure, the so-called \emph{Chern-Simons bundle $2$-gerbe} \cite{CJMSW05,W13}.
From this perspective, String geometry is the study of trivializations of the Chern-Simons bundle $2$-gerbe toge\-ther with compatible connections.
Actual such triviali\-zations (with connection) are called (geometric) String structures in the gerbe theoretic sense.

Isomorphism classes of $\Spin_n$-bundles $\pi:P\to X$ are in 1-1 correspondence with homotopy classes of maps $f:X \to B\Spin_n$ to the classifying space.
Likewise, isomorphism classes of $\String_n$-lifts $\pi:P\to X$ are in 1-1 correspondence with homotopy classes of lifts 
\begin{equation*}
\xymatrix{
&& B\String_n \ar[d] \\
X \ar@{.>}[urr]^{\widetilde f} \ar[rr]_f && B\Spin_n .
}
\end{equation*}
By work of Redden \cite{R11}, the latter are also in 1-1 correspondence with certain cohomology classes in $H^3(P;\Z)$, called \emph{String classes}.
The set of all String classes on $\pi:P \to X$ is a torsor for the cohomology $H^3(X;\Z)$ of the base. 
String classes are recovered by String structures, in both senses mentioned above.
Associated with a String class on a compact riemannian manifold $(X,g)$ is a canonical $3$-form $\varrho \in \Omega^3(X)$ which trivializes the class $\frac{1}{2}p_1$, see \cite{R11}.
Transgression to loop space maps a String class to the Chern class of a line bundle over $\LL P$.
The total space of this line bundle is the total space of the $\widehat{\LL \Spin_n}$-lift of the loop bundle $\pi:\LL P \to \LL X$.

String connections as described above refine String classes to differential cohomology classes.
More precisely, isomorphism classes of geometric String structures are in 1-1 correspondence with differential String classes \cite{W14}.
The set of all differential String classes is a torsor for the differential cohomology $\widehat H^3(X;\Z)$ of the base \cite{W13}.

String classes are a special case of the more general concept of trivialization classes for universal characteristic classes $u \in H^*(BG;\Z)$ for principal $G$-bundles \cite{R11}.
For a principal $G$-bundle with connection $(P,\theta) \to X$, we expect to have canonical refinements of trivialization classes to differential cohomology classes.
In analogy with differential String classes, an appropriate notion of differential trivialization classes should satisfy the following two properties:
firstly, restriction to any fiber should yield a canonical class in the differential cohomology of the Lie group $G$; secondly, the set of all differential differential trivialization classes should be a torsor for the action of the differential cohomology $\widehat H^*(X;\Z)$ of the base.

In the present article, we establish a notion of differential trivialization classes for universal characteristic classes of principal $G$-bundles (with connection) and show that it satisfies these two properties.
This notion is not as obvious as it may seem: taking the set of all differential cohomology classes with characteristic class a trivialization class is far too large, even after imposing the condition to restrict to a fixed differential cohomology class along the fibers.
Associated with a differential trivialization class in our sense is a differential form on the base $\varrho \in \Omega^{*}(X)$ which trivializes the given characteristic class $u(P) \in H^*(X;\Z)$.
Specializing to the characteristic class $\frac{1}{2}p_1 \in H^4(B\Spin_n;\Z)$ we recover differential String classes in the sense of \cite{W14} and their canonical $3$-forms on $X$.

On a compact Riemannian manifold $(X,g)$, we find another equivalent charac\-terization of geometric String structures:
By Hodge theory and adiabatic limits \cite{R11}, one obtains a canonical $3$-form $\varrho \in \Omega^3(X)$ such that $CS_\theta(\frac{1}{2}p_1) - \pi^*\varrho \in \Omega^3(P)$ represents a given String class.
We show that this setting uniquely determines a dif\-feren\-tial String class with differential form $\varrho$.

The main tool in our concept of differential trivialization classes is the notion of \emph{Cheeger-Chern-Simons characters} $\widehat{CCS}_\theta \in \widehat H^*(\pi;\Z)$.
These are new concrete examples of relative differential characters in the sense of \cite{BT06}.
Differential characters were introduced by Cheeger and Simons in \cite{CS83} as certain $\Ul$-valued characters on the group of smooth singular cycles in $X$. 
The ring of differential characters on a manifold $X$ is nowadays called the \emph{differential cohomology} of $X$.
We denote it by $\widehat H^*(X;\Z)$. 
Out of a differential character $h \in \widehat H^k(X;\Z)$ one obtains a smooth singular cohomology class $c(h) \in H^k(X;\Z)$ -- its \emph{characteristic class} -- and a differential $k$-form $\curv(h) \in \Omega^k(X)$ with integral periods -- its \emph{curvature}. 
Both the characteristic class and the curvature map are well-known to be surjective. 
In this sense, differential characters are refinements of smooth singular cohomology classes by differential forms.

As a particular example, Cheeger and Simons construct certain even degree differential characters on the base $X$ of a principal $G$-bundle with connection $(P,\theta)\to X$ with curvature given by the Chern-Weil forms of the connection $\theta$.
The construction thus lifts the Chern-Weil map to a differential character valued map.
We refer to these particular characters as \emph{Cheeger-Simons characters} and denote them by $\widehat{CW}_\theta$ to emphasize their relation to the Chern-Weil map.
They are also called differential characteristic classes by some authors \cite{B12,H13}.

Relative differential characters were introduced by Brightwell and Turner in \cite{BT06} as differential characters on the mapping cone cycles of a smooth map $\varphi:A \to X$.
Thus it would also be appropriate to call them mapping cone characters.
A relative character $h \in \widehat H^k(\varphi;\Z)$ determines an absolute character $\vds(h) \in \widehat H^k(X;\Z)$, see \cite{BB13}.
Out of a relative character $h \in \widehat H^k(\varphi;\Z)$ one obtains an additional differential form $\cov(h) \in \Omega^{k-1}(A)$ -- its \emph{covariant derivative}.
Relative differential characters in $\widehat H^k(\varphi;\Z)$ may be regarded as sections along the map $\varphi:A \to X$ of the absolute characters $\vds(h) \in \widehat H^k(X;\Z)$.
An absolute character in $\widehat H^k(X;\Z)$ has sections along a smooth map $\varphi$ if and only if its characteristic class vanishes upon pull-back by $\varphi$; see \cite{BB13} for further details.

Bundle gerbes on $X$ provide a particular class of examples of relative differential characters: any bundle gerbe $\mathcal G$ with connection, defined by a submersion $\pi:Y \to X$, determines a relative differential character $h_{\mathcal G} \in \widehat H^3(\pi;\Z)$ with covariant derivative the curving $B \in \Omega^2(Y)$ of the bundle gerbe, see \cite{BB13}.
The absolute character $\vds(h_{\mathcal G}) \in \widehat H^3(X;\Z)$ corresponds to the stable isomorphism class of the bundle gerbe.
Analogously, bundle 2-gerbes with connection, defined by a submersion $\pi:Y \to X$, determine relative characters in $\widehat H^4(\pi;\Z)$ with curvature given by the curvature $4$-form of the bundle 2-gerbe and covariant derivative given by the curving $B \in \Omega^3(Y)$.

The Cheeger-Simons construction mentioned above is refined in the present article to a relative differential character valued map, which factorizes through the Cheeger-Simons map.
We call the resulting relative characters \emph{Cheeger-Chern-Simons characters} and denote them by $\widehat{CCS}_\theta$. 
Two elementary observations lead to the construction of these Cheeger-Chern-Simons characters:
First of all, universal characteristic classes of principal $G$-bundles vanish upon pull-back to the total space.
Thus Cheeger-Simons characters admit sections along the bundle projection $\pi:P \to X$.
Secondly, the pull-back of a Chern-Weil form $CW_\theta$ to the total space has a canonical trivialization: the associated Chern-Simons form $CS_\theta$.
Putting these observations together, we obtain our notion of Cheeger-Chern-Simons characters:
Let $G$ be a Lie group with finitely many components and $(P,\theta) \to X$ a principal $G$-bundle with connection.
Associated with an invariant homogeneous polynomial $\lambda$ of degree $k$ on the Lie algebra $\mathfrak g$ and a corresponding universal characteristic class $u \in H^{2k}(BG;\Z)$ is a natural relative differential character $\widehat{CCS}_\theta(\lambda,u) \in \widehat H^{2k}(\pi;\Z)$.
It is uniquely determined by the requirement that it projects to the Cheeger-Simons character $\widehat{CW}_\theta(\lambda,u)$ and that its covariant derivative is given by the Chern-Simons form $CS_\theta(\lambda)$.
The construction of the Cheeger-Chern-Simons character $\widehat{CCS}_\theta(\lambda,u)$ relies on the same arguments as the construction of the Cheeger-Simons character $\widehat{CW}_\theta$ in \cite{CS83}.
We also discuss multiplicativity and dependende upon the connection.

In the same way as Cheeger-Simons characters $\widehat{CW}_\theta$ generalize Chern-Simons invariants of oriented closed manifolds, Cheeger-Chern-Simons characters $\widehat{CCS}_\theta$ generalize Chern-Simons invariants of oriented manifolds with boundary.
Specializing to the universal characteristic class $u=\frac{1}{2}p_1 \in H^4(B\Spin_n;\Z)$, the Cheeger-Chern-Simons character $\widehat{CCS}_\theta(\frac{1}{2}p_1)$ coincides with the relative differential cohomology class $h_\mathcal{C\!S} \in \widehat H^4(\pi;\Z)$ represented by the so-called Chern-Simons bundle $2$-gerbe $\mathcal{C\!S}$ on $X$, constructed in \cite{CJMSW05}. 

We use Cheeger-Chern-Simons characters for two kinds of applications: to introduce our notion of differential trivializations of universal characteristic classes of principal $G$-bundles and to define trivializations of higher order Chern-Simons theories.
For both these applications we need two kinds of transgression:
transgression to loop space and higher mapping spaces by means of fiber integration, and transgression in fiber bundles from the base space to the fiber.

Transgression of differential cohomology classes to loop space was constructed by various authors in several models of differential cohomology \cite{B93,BKS10,DL05,GT00,HLZ03,HS05,L06}.
In \cite{BB13} we study fiber integration and transgression of absolute and relative differential cohomology systematically.
We prove that fiber integration is uniquely determined by naturality and compatibility with differential forms and construct particularly nice and geometric fiber integration and transgression maps.  

Transgression in the cohomology of fiber bundles from the base to the fiber is a classical tool, mainly used to study the algebraic topology of principal bundles \cite{B53}.
To the best of our knowledge, there are no known generalizations to differential cohomology.
We discuss two different ways to construct the cohomology transgression by appropriate diagram chase arguments: one using the inclusion of the fiber in the total space, the other one using the mapping cone cohomology of the bundle projection.
We show that only the latter nicely generalizes to differential cohomo\-logy.

For an arbitrary fiber bundle, the transgression map is defined as usual on the transgressive elements in the differential cohomology of the base and takes values in a quotient of the differential cohomology of the fiber.
On a universal principal $G$-bundle, all differential cohomology classes of the base are transgressive.
We establish two different methods to lift the transgression to a map $\widehat T:\widehat H^*(BG;\Z) \to \widehat H^*(G;\Z)$ which takes values in differential cohomology of the fibers, not just a quotient.
We use these lifts to prove two versions of a differential Hopf theorem: the differential cohomology of a compact Lie group $G$ is generated, up to certain topologically trivial characters, by trangressed Cheeger-Simons characters on $BG$.

The cohomology transgression $T:H^4(BG;\Z) \to H^3(G;\Z)$ for compact Lie groups has a well-known interpretation in mathematical physics terms \cite{DW90}:
elements of $H^4(BG;\Z)$ classify topological charges of $3$-dimensional Chern-Simons theories, while elements of $H^3(G;\Z)$ are topological charges of two-dimensional sigma models on $G$, i.e.~Wess-Zumino models.
By work of Dijkgraaf and Witten, the correspondence between $3$-dimensional Chern-Simons theories and Wess-Zumino models on $G$ is determined by the transgression, applied to the topological charges.
Our differential cohomology transgression $\widehat T: \widehat H^*(BG;\Z) \to \widehat H^*(G;\Z)$ generalizes the Dijkgraaf-Witten correspondence to fully extended higher order Chern-Simons theories.
These higher order Chern-Simons theories are constructed from higher degree Cheeger-Simons and Cheeger-Chern-Simons characters by means of transgression to loop space and higher mapping spaces.
Trivializations of extended higher order Chern-Simons theories are constructed in terms of transgression of differential String classes (in the case of $3$-dimensional Chern-Simons theories) and universal differential trivializations (in the general case).

The article is organized as follows:
Section~\ref{sec:hat_CCS} reviews the well-known Chern-Weil, Chern-Simons and Cheeger-Simons constructions and constructs \emph{Cheeger-Chern-Simons characters}.
Section~\ref{sec:transgression} discusses several notions of \emph{transgression} for Cheeger-Simons characters: transgression to loop space and transgression in fiber bundles.
It further establishes basic results on the differential cohomology of compact groups and their classifying spaces.
Section~\ref{sec:diff_triv} introduces \emph{differential trivializations} of universal charac\-teristic classes of principal $G$-bundles.
Section~\ref{sec:diff_string} specializes to the case of \emph{differential String classes} and discusses trivializations of fully extended higher order Chern-Simons theories.  
The appendices provide background information on differential characters, bundle $2$-gerbes and transgression.

\pagebreak[4]

\noindent
\emph{Acknowledgement.}\\
\noindent
It is a great pleasure to thank Corbett Reddden and Konrad Waldorf for very helpful discussion.
Part of the paper was written during a stay at the \emph{Erwin Schr\"odinger International Institute for Mathematical Physics (ESI)} in Vienna.
The author thanks \emph{ESI} for hospitality.
Financial support is gratefully acknowledged to the \emph{DFG Sonderforschungsbereich ``Raum Zeit Materie''}, to the \emph{DFG Network ``String Geometry''}, and to \emph{ESI}. 

%%%%%%%%%%%%%%%%%%%%%%%%%%%%%%%%%%%%%%%%%%%%%%%%%%%%%%%%%%%%%%%%%%%%%%%%%
\section{Cheeger-Chern-Simons theory}\label{sec:hat_CCS}
%%%%%%%%%%%%%%%%%%%%%%%%%%%%%%%%%%%%%%%%%%%%%%%%%%%%%%%%%%%%%%%%%%%%%%%%%

Throughout this section let $G$ be a Lie group with finitely many components.
Let $\mathfrak g$ be its Lie algebra.
Let $(P,\theta) \to X$ be a principal $G$-bundle with connection.
More explicitly, $P$ and $X$ are smooth manifolds (not necessarily finite dimensional\footnote{In an infinite dimensional setting, one may take $P$ and $X$ to be either Banach or Fr\'echet manifolds; see \cite{H82} for an overview of Fr\'echet manifolds and also \cite{KM97} for the de Rham complex on several categories of infinite dimensional manifolds, including Fr\'echet manifolds.}) together with a smooth right $G$-action on $P$, a diffeomorphism $P/G \xrightarrow{\approx} X$ and a connection $1$-form $\theta \in \Omega^1(P;\mathfrak g)$.
Let $\lambda$ be an invariant polynomial on $\mathfrak g$, homogeneous of degree $k$.
By the classical \emph{Chern-Weil construction}, the polynomial $\lambda$ associates with $(P,\theta)$ the Chern-Weil form $CW_\theta(\lambda) \in \Omega^{2k}(X)$.
In fact, the Chern-Weil form $CW_\theta(\lambda)$ is closed and its de Rham cohomology class $[CW_\theta(\lambda)] \in H^{2k}_{dR}(X)$ does not depend upon the connection $\theta$.

The Chern-Weil construction has two well-known refinements, the Chern-Simons-construction \cite{CS74} and the Cheeger-Simons construction \cite{CS83}:
The pull-back of the Chern-Weil form $CW_\theta(\lambda)$ along the bundle projection $\pi:P \to X$ is an exact form.
The Chern-Simons form $CS_\theta(\lambda) \in \Omega^{2k-1}(P)$, constructed in \cite{CS74}, satisfies $dCS_\theta(\lambda) = \pi^*CW_\theta(\lambda)$.
Moreover, the Chern-Weil construction has a unique lift to the differential cohomology $\widehat H^{2k}(X;\Z)$, constructed in \cite{CS83}.
In other words, the Chern-Weil form $CW_\theta(\lambda)$ is the curvature of a differential character $\widehat{CW}_\theta(\lambda) \in \widehat H^{2k}(X;\Z)$.   

In this section, we further refine these well-known constructions: we show that there is a canonical natural relative differential character for the bundle projection $\pi:P \to X$ with covariant derivative $CS_\theta(\lambda)$, which maps to $\widehat{CW}_\theta(\lambda)$ under the map $\vds: \widehat H^{2k}(\pi;\Z) \to \widehat H^{2k}(X;\Z)$.
Since this construction combines the Chern-Simons form $CS_\theta(\lambda)$ with the Cheeger-Simons differential character $\widehat{CW}_\theta(\lambda)$, we call it the \emph{Cheeger-Chern-Simons} construction.

For convenience of the reader, we review the classical Chern-Weil and Chern-Simons constructions.
The basic notions of relative and absolute differential characters are reviewed in Appendix~\ref{app:characters}.

\subsection{The Chern-Weil, Chern-Simons and Cheeger-Simons constructions}\label{sec:CW_CS_CCS}
In this section, we briefly review the results of the classical Chern-Weil and Chern-Simons constructions.
We review in more detail the refinement of the classical Chern-Weil map to a differential character valued map, constructed by Cheeger and Simons in \cite{CS83}.
We term this the \emph{Cheeger-Simons construction}.

\subsubsection{Universal bundles and connections}\label{sec:universal}
Let $G$ be a Lie group with finitely many components.
Let $\mathfrak g$ be its Lie algebra.
We denote by $\pi_{EG}:EG \to BG$ the universal principal $G$-bundle over the classifying space of $G$.
Let $x \in BG$ and denote by $EG_x:= \pi_{EG}^{-1}(x)$ the fiber of $EG$ over $x$.

Any principal $G$-bundle $\pi:P \to X$ can be written as pull-back of the universal bundle via a pull-back diagram
\begin{equation*}
\xymatrix{
P = f^*EG \ar^(0.55){F}[rr] \ar_{\pi}[d] && EG \ar^{\pi_{EG}}[d] \\
X \ar_(0.55){f}[rr] && BG \,. \\
} 
\end{equation*}
A map $f:X \to BG$ such that $f^*EG = P$ as principal $G$-bundles is called a classifying map for $P \to X$.

The pull-back bundle $\pi_{EG}^*EG \to EG$ is trivial as a principal $G$-bundle.
A trivilization is given by the tautological section that maps any point $p \in EG$ to itself, considered as a point in the fiber $(\pi_{EG}^*EG)_{p}=EG_{\pi(p)}$ over $\pi(p) \in BG$.

A universal characteristic class for principal $G$-bundles is a cohomology class in $H^*(BG;\Z)$.
Since the total space $EG$ of the universal principal $G$-bundle is contractible, we have $H^*(EG;\Z) =\{0\}$.
Thus any universal characteristic class $u \in H^*(BG;\Z)$ satisfies $\pi^*u =0$.

Connections on a principal $G$-bundle $P \to M$ can also be induced via pull-back from certain universal data, called \emph{universal connections}:
Narasimhan and Ramanan \cite{NR61} construct universal connections on $n$-classifying principal $G$-bundles.\footnote{A principal $G$-bundle $EG_n \to BG_n$ is called $n$-classifying, if any principal $G$-bundle on any manifold $M$ of dimension $\leq n$ is a pull-back of $EG_n \to BG_n$.}
A universal connection for principal $G$-bundles in the sense of \cite{NR61} is a family of universal connections on $n$-classifying $G$-bundles $EG_n \to BG_n$ (or a direct limit of those).

While $n$-classifying bundles can be realized as finite dimensional smooth manifolds, the universal principal $G$-bundle cannot. 
As a topological bundle, $\pi_{EG}:EG \to BG$ is given by the Milnor construction \cite{M56}.
It can also be given a smooth structure either as a Banach manifold \cite{S80} or as a differentiable space \cite{M79}.\footnote{We expect that $\pi_{EG}:EG \to BG$ can also be realized as a Fr\'echet manifold and that one can construct universal connections as connection $1$-forms in the Fr\'echet manifold sense. 
But to the best of our knowledge, no such statement exists in the literature so far.}
In both cases, universal connections are realized as connection $1$-forms on $EG$ in the respective sense.

Now for any principal $G$-bundle $\pi:(P,\theta) \to X$ with connection, there exists a smooth map $f:M \to BG$ such that $(P,\theta) \to X$ and $f^*(EG,\Theta) \to X$ are isomorphic as principal $G$-bundles with connection over $X$.
See also \cite{K82,R82} for further aspects of universal connections.
Another construction of universal connections on universal principal $G$-bundles has appeared more recently in \cite{BHS12}.

A map $f:X \to BG$ such that $(P,\theta)=f^*(EG,\Theta)$ is called a classifying map for $(P,\theta) \to X$ as principal $G$-bundle with connection.
Note that neither universal connections nor classifying maps for a bundle with connection are unique.

A completely different point of view on classifying spaces, universal bundles and universal connections is taken in \cite{FH13}.
By means homotopical algebra, Freed and Hopkins construct a universal principal $G$-bundle, denoted $E_\nabla G \to B_\nabla G$, where both the total space and base are simplicial sheaves (instead of infinite dimensional manifolds).
In this setting, there is a \emph{canonical} universal connection for principal $G$-bundles, which is a $\g$-valued $1$-form on $E_\nabla G$.
Moreover, this universal connection induces \emph{unique} classifying maps for principal $G$-bundles with connection.

Although it would be interesting to study the Cheeger-Simons and Cheeger-Chern-Simons characters in terms of this new notion of universal connection, we will not pursue this approach in the present paper.
For a fairly general exposition of generalized differential cohomology in terms of homotopical algebra, we refer to \cite{B12}.

\subsubsection{The Chern-Weil construction}\label{sec:Chern-Weil}
Let $G$ be a Lie group with finitely many components.
Let $\mathfrak g$ be its Lie algebra.
Following the notation of \cite{CS83}, we set
$$
I^k(G) := \Big\{ \lambda: \underbrace{\g \otimes \ldots \otimes \g}_{k} \to \R \,\Big|\, \mbox{$\lambda$ symmetric, multilinear, $\Ad_G$-invariant} \Big\}
$$ 
for the space of $\Ad_G$-invariant symmetric multilinear real valued functions from the $k$-fold tensor product of $\g$.
Such functions are called invariant homogeneous polynomials of degree $k$ on $\g$.

Let $\pi:(P,\theta) \to X$ be a principal $G$-bundle with connection.
We denote by $F_\theta \in \Omega^2(X,\Ad(P))$ the curvature $2$-form of the connection $\theta$.
The {\em Chern-Weil map} $CW_\theta:I^k(G) \to \Omega^{2k}(X)$ associates to an invariant polynomial $\lambda \in I^k(G)$ the {\em Chern-Weil form}\footnote{In \cite{CS74} this form is denoted $P(\theta)$ where $P$ denotes an invariant polynomial on $\g$.} 
\begin{equation}\label{eq:def_CW}
CW_\theta(\lambda) 
:= 
\lambda(F_\theta^k)
=
\lambda(\underbrace{F_\theta  \wedge \ldots \wedge F_\theta}_{k}) \in \Omega^{2k}(X) \,. 
\end{equation}
The Chern-Weil form $CW_\theta(\lambda)$ is a closed differential form whose de Rham cohomology class does not depend upon the choice of connection $\theta$.

More precisely, given two connections $\theta_0$, $\theta_1$, the Chern-Weil forms $CW_{\theta_0}$, $CW_{\theta_1}$ differ by the differential of the Chern-Simons form $CS(\theta_0,\theta_1;\lambda)$.
The construction of the Chern-Simons form is reviewed in Section~\ref{sec:Chern-Simons} below.

\subsubsection{The Cheeger-Simons construction}\label{sec:Cheeger-Simons}
As above, let $G$ be a Lie group with finitely many components.
Let $\g$ be its Lie algebra.
Let $\Theta$ be a fixed universal connection on the universal principal $G$-bundle $\pi_{EG}:EG \to BG$.
We denote the corresponding universal Chern-Weil map to the real cohomology of $BG$ by 
$$
CW: I^k(G) \to H^{2k}(BG;\R)\,, \quad \lambda \mapsto [\lambda(F_\Theta^k)]_{dR} \in H^{2k}_{dR}(BG) \cong H^{2k}(BG;\R) \,.
$$
For any principal $G$-bundle with connection $\pi:(P,\theta) \to X$,  we have the commutative diagram:
\begin{equation*}
\xymatrix{
I^k(G) \ar[rr]^{CW} \ar[d]_{CW_\theta} && H^{2k}(BG;\R) \ar[d]^{cl_\R} && H^{2k}(BG;\Z) \ar[ll] \ar[d]^{cl_\Z} \\
\Omega^{2k}_\cl(X) \ar[rr]_{dR} && H^{2k}(X;\R) && H^{2k}(X;\Z) \ar[ll]\,.
}
\end{equation*}
The maps $cl_\R$ and $cl_\Z$ are induced by a classifying map $f:X \to BG$ for the bundle with connection $(P,\theta)$.
They do not depend upon the choice of classifying map.
The map $dR:\Omega^{2k}_\cl(X) \to H^{2k}(X;\R)$ is the projection $\Omega^{2k}_\cl(X) \to \Hdr^{2k}(X)$, followed by the de Rham isomorphism. 
The horizontal maps in the right square are the change of coefficients maps induced by the inclusion $\Z \hookrightarrow \R$.
For an integral cohomology class $u \in H^{2k}(X;\Z)$, we denote by $u_\R$ its image in $H^{2k}(X;\R)$. 

The question arises whether the Chern-Weil form $CW_\theta(\lambda)$ may be represented as curvature of an appropriate differential character (in case it has integral periods).
The question is answered affirmatively in \cite{CS83} by what we term the Cheeger-Simons construction.
Following the notation established there, we put:
\begin{equation*}\label{eq:def:K2k}
K^{2k}(G;\Z) := \big\{ (\lambda,u) \in I^k(G) \times H^{2k}(BG;\Z) \;\big|\; CW(\lambda) = u_\R \big\} 
\end{equation*}
for the set of pairs of invariant polynomials and integral universal characteristic classes that match in real cohomology.
Moreover, denote by
\begin{equation*}
R^n(X;\Z) := \big\{ (\omega,w) \in \Omega^n_0(X) \times H^n(X;\Z) \;\big|\; [\omega]_{dR} = w_\R \big\} 
\end{equation*}
the set of pairs of closed forms with integral periods $\omega$ and smooth singular cohomology classes $w$ that match in real cohomology.

It is shown in \cite{CS83} that the Chern-Weil map $CW_\theta$ has a unique natural lift to a differential character valued map in the following sense: 
For any $k \geq 1$ and any principal $G$-bundle with connection $\pi:(P,\theta) \to X$, there exists a unique natural map $\widehat{CW}_\theta$ such that the diagram
\begin{equation}\label{eq:diag_def_CWhat}
\xymatrix{
&& \widehat H^{2k}(X;\Z) \ar[d]^{(\curv,c)} \\
K^{2k}(G;\Z) \ar@{.>}[urr]^{\widehat{CW}_\theta} \ar[rr]_{CW_\theta \times cl_\Z} && R^{2k}(X;\Z)
}
\end{equation}
commutes.
We call the differential character $\widehat{CW}_\theta(\lambda,u) \in \widehat H^{2k}(X;\Z)$ the \emph{Cheeger-Simons character} associated with $(\lambda,u) \in K^{2k}(G;\Z)$.\footnote{In \cite{CS83} this character is denoted $S_{P,u}(\alpha)$ where $P$ denotes an invariant polynomial, $u$ a universal characteristic class and $\alpha$ a principal $G$-bundle with connection.}
By diagram \eqref{eq:diag_def_CWhat}, the curvature of the Cheeger-Simons character is the Chern-Weil form  
\begin{align}
\curv(\widehat{CW}_\theta(\lambda,u)) 
&= CW_\theta(\lambda)  \label{eq:curv_CW} \\
\intertext{and its characteristic class is given by} 
c(\widehat{CW}_\theta(\lambda,u))
&= c_\Z(u) = f^*u = u(P). \label{eq:c_CW}
\end{align}
Naturality of the map $\widehat{CW}_\theta$ means that for any smooth map $g:X' \to X$ and the pull-back bundle $g^*(P,\theta) \to X'$, we have
\begin{align}
g^*\big(\widehat{CW}_\theta(\lambda,u)\big) = \widehat{CW}_{g^*\theta}(\lambda,u)  \in \widehat H^{2k}(X';\Z).
\end{align}
We call the map $\widehat{CW}_\theta: K^*(G;\Z) \to \widehat H^*(X;\Z)$ the \emph{Cheeger-Simons construction}. 
By \cite[Cor.~2.3]{CS83}, it is a ring homomorphism with respect to the ring structures of $K^*(G;\Z)$ and $\widehat H^*(X;\Z)$.

\subsubsection{The Chern-Simons construction}\label{sec:Chern-Simons}
As before, let $\pi:(P,\theta) \to X$ be a principal $G$-bundle with connection and $\lambda \in I^k(G)$ an invariant polynomial.
The pull-back bundle $\pi^*P \to P$ has a tautological section $\sigma_\mathrm{taut}$, which maps any point $p \in P$ to itself, now considered as a point in the fiber $(\pi^*P)_p = P_{\pi(p)}$.
The tautological section $\sigma_\mathrm{taut}$ yields a trivialization  $P \times G \xrightarrow{\cong} \pi^*P $, $(p,g) \mapsto \sigma_\mathrm{taut}(p) \cdot g$.
In particular, $\pi^*P$ carries a canonical flat connection $\theta_\mathrm{taut}$, obtained from the trivial connection on $P \times G$ by pull-back via the inverse of the trivialization. 
Since the de Rham cohomology classes of Chern-Weil forms do not depend upon the choice of connection, all Chern-Weil classes of $\pi^*P$ vanish.
In particular, the pull-backs $\pi^*CW_\theta(\lambda)=CW_{\pi^*\theta}(\lambda) \in \Omega^{2k}(P)$ of Chern-Weil forms are exact forms on $P$.

The {\em Chern-Simons form} of a connection $\theta$ was first constructed in \cite{CS74} as an invariant $(2k-1)$-form on $P$ whose differential is the pull-back $\pi^*CW_\theta(\lambda)$ of the Chern-Weil form.
In fact, there are two different notions of Chern-Simons forms, closely related to one another: the Chern-Simonms form of two connections $\theta_0,\theta_1$ is a $(2k-1)$-form on the base $CS(\theta_0,\theta_1;\lambda) \in \Omega^{2k-1}(X)$ with differential the difference of the corresponding Chern-Weil forms, while the Chern-Simons form for one connection $\theta$ is a $(2k-1)$-form on the total space $CS_\theta(\lambda) \in \Omega^{2k-1}(P)$ with differential the pull-back of the Chern-Weil form.    

Denote by $\mathcal A(P)$ the space of connections on the principal $G$-bundle $P \to X$.
It is an affine space for the vector space $\Omega^1(X;\Ad(P))$ of $1$-forms on the base $X$ with values in the associated bundle $\Ad(P):= P \times_{\Ad} \g$.
In particular, $\mathcal A(P)$ is path connected.

Let $\theta_0,\theta_1$ be connections on $\pi:P \to X$.
Let $\theta:[0,1] \to \mathcal A(P)$ be a smooth path joining them.
It defines a connection $\theta \in \mathcal A(P \times [0,1])$ on the principal $G$-bundle $P \times [0,1] \to X \times [0,1]$. 
Integrating the Chern-Weil form of this connection over the fiber of the trivial bundle $X \times [0,1] \to X$, we obtain a $(2k-1)$-form $\fint_{[0,1]} CW_{\theta(s)}(\lambda) \in \Omega^{(2k-1)}(X)$.
By the fiberwise Stokes theorem\footnote{The orientation conventions for fiber bundles with boundary are as in \cite[Ch.~4, 7]{BB13}.}, we have:
\begin{equation}\label{eq:CS_two_CW}
CW_{\theta_1}(\lambda) - CW_{\theta_0}(\lambda)
=
-d\fint_{[0,1]} CW_{\theta}(\lambda)
+ \fint_ {[0,1]} \underbrace{dCW_{\theta}(\lambda)}_{=0}.
\end{equation}

Since $\mathcal A(P)$ is an affine space, there is a canonical path joining two connections $\theta_0$ and $\theta_1$, namely the straight line $\theta(t) := (1-t)\theta_0 + t \theta_1$ from $\theta_0$ to $\theta_1$.
The \emph{Chern-Simons form} for two connections is the $(2k-1)$-form on the base, obtained as above, for the straight line:
\begin{equation}\label{eq:def_CS_two}
CS(\theta_0,\theta_1;\lambda) 
:=
-\fint_{[0,1]} CW_{(1-t)\theta_0 + t \theta_1}(\lambda) \in \Omega^{(2k-1)}(X). 
\end{equation}
As above, it satisfies $dCS(\theta_0,\theta_1;\lambda) = CW_{\theta_1}(\lambda) - CW_{\theta_0}(\lambda)$.

The \emph{Chern-Simons form} $CS_\theta(\lambda) \in \Omega^{2k-1}(P)$ for one connection $\theta$ on the bundle $P \to X$ is defined as the Chern-Simons form for the two connections $\theta_\mathrm{taut}$ and $\pi^*\theta$ on the pull-back bundle $\pi^*P \to P$:
\begin{equation}\label{eq:def_CS_one}
CS_\theta(\lambda) 
:=
CS(\theta_\mathrm{taut},\pi^*\theta;\lambda).
\end{equation}
Since the tautological connection is flat, we have $dCS_\theta(\lambda) \stackrel{\eqref{eq:CS_two_CW}}{=} CW_{\pi^*\theta}(\lambda) = \pi^*CW_\theta(\lambda)$.

We call the map $CS_\theta:I^k(G) \to \Omega^{2k-1}(P)$, $\lambda \mapsto CS_\theta(\lambda)$, the \emph{Chern-Simons construction}.\footnote{In \cite{CS74} the invariant polynomials are denoted by $P$. The corresponding Chern-Simons form is denoted $TP(\theta)$ to emphasize its relation to the transgression of the Chern-Weil form.}
Up to an exact remainder, the Chern-Simons form $CS_\theta(\lambda)$ is the unique natural $(2k-1)$-form on $P$ with differential $\pi^*CW_\theta(\lambda)$.
In fact, any two $(2k-1)$-forms on $EG$ with differential $\pi_{EG}^*CW_\Theta(\lambda)$ differ by an exact form, since $EG$ is contractible.
 
In general, the Chern-Simons form $CS_\theta(\lambda)$ is an invariant form on $P$, but it is neither horizontal nor closed.
It depends in a well-known manner upon the connection $\theta$ (see \cite[Prop.~3.8]{CS74} and Section~\ref{subsec:depend_conn_1} below).
The pull-back of the Chern-Simons form $CS_\theta(\lambda)$ to the fiber $P_x \cong G$ over any point $x \in X$ does not depend upon the choice of connection $\theta$.
It can be expressed solely (and explicitly) in terms of the Maurer-Cartan form of $G$.
Moreover, for low dimensional bases $X$, the Chern-Simons form $CS_\theta(\lambda)$ is independent of the connection $\theta$ and is itself a closed form, see \cite[Thm.~3.9]{CS74}.

Let $\Theta$ be a universal connection on the universal principal $G$-bundle $\pi_{EG}:EG \to BG$.
Then we have $dCS_\Theta(\lambda) = \pi_{EG}^*CW_\Theta(\lambda) = \curv({\pi_{EG}}^*\widehat{CW}_\Theta(\lambda,u))$.
From the exact sequence \eqref{eq:short_ex_sequ} and contractibility of $EG$ we conclude $\iota(CS_\Theta(\lambda))={\pi_{EG}}^*\widehat{CW}_\Theta(\lambda,u)$.
Thus the Chern-Simons form provides a topological trivialization of the pull-back of the Cheeger-Simons character $\widehat{CW}_\Theta(\lambda,u)$ to $EG$.

Now let $(P,\theta) \to X$ be a principal $G$-bundle with connection.
Let $f:X \to BG$ be a classifying map for $(P,\theta)$ and denote by $F:(P,\theta) = f^*(EG,\Theta) \to (EG,\Theta)$ the induced map of bundles with connection.
Then we have:
\begin{align}\label{eq:pi_CW_CS}
\iota(CS_\theta(\lambda))
&=
F^*\iota(CS_\Theta(\lambda)) \notag \\
&=
F^*{\pi_{EG}}^*\widehat{CW}_\Theta(\lambda,u) \notag \\
&=
\pi^*f^*\widehat{CW}_\Theta(\lambda,u) \notag \\
&=
\pi^*\widehat{CW}_\theta(\lambda,u) \,.
\end{align}
Here $\iota:\Omega^{2k-1}(EG) \to \widehat H^{2k}(EG;\Z)$ denotes topological trivialization of differential characters, as explained in Appendix~\ref{app:characters}.

\subsubsection{The Chern-Simons action}\label{sec:action}
Let $(P,\theta) \to X$ be a principal $G$-bundle with connection.
Let $f:M \to X$ be a smooth map.  
Suppose that the pull-back bundle $\pi:f^*P \to M$ admits a section $\sigma:M \to f^*P$, hence $\pi \circ \sigma = \id_M$.
Then we obtain a trivialization $M \times G \to f^*P$ by $(x,g) \mapsto \sigma(x) \cdot g$.
Thus the bundle $\pi: f^*P \to M$ can be represented by a constant map $f:M \to BG$ and hence all its characteristic classes vanish.

In particular, any Cheeger-Simons character $\widehat{CW}_{f^*\theta}(\lambda,u)$ is topologically trivial.
In fact, topological trivializations are given by pull-back via $\sigma$ of the corresponding Chern-Simons form $CS_{f^*\theta}(\lambda)$.
Namely, from \eqref{eq:pi_CW_CS} we obtain:
\begin{equation}
\widehat{CW}_{f^*\theta}(\lambda,u)
=
\sigma^*(\pi^*\widehat{CW}_{f^*\theta}(\lambda,u))
=
\iota(\sigma^*CS_{f^*\theta}(\lambda)) \,.
\end{equation}
Since the left hand side is independent of the choice of section $\sigma$, the same holds for the right hand side.

In particular, if $M$ is a closed oriented $(2k-1)$ manifold and the pull-back bundle $f^*P \to M$ admits sections, then we obtain the Chern-Simons invariant of $M$ by evaluating the Cheeger-Simons character on the fundamental class:
$$
\big(\widehat{CW}_{f^*\theta}(\lambda,u)\big)[M]
=
\exp \Big( 2\pi i \int_M \sigma^*CS_{f^*\theta}(\lambda) \Big) \,.
$$
This happens e.g.~if $G$ is simply connected and $M$ is a closed oriented $3$-manifold, for in this case, any principal $G$-bundle $\pi:f^*P \to M$ admits sections.
In this sense, the Cheeger-Simons character $\widehat{CW}_\theta(\lambda,u)$ generalizes the classical Chern-Simons invariants of closed oriented $3$-manifolds. 

In Section~\ref{subsec:CS_action_bound}, we generalize this observation to the Chern-Simons action of oriented manifolds with boundary.

\subsection{The Cheeger-Chern-Simons construction}\label{sec:Cheeger-Chern-Simons}
In this section, we combine the Cheeger-Simons and Chern-Simons constructions to a relative differential character valued map.
This map will be called the Cheeger-Chern-Simons construction.

As above, let $G$ be a Lie group with finitely many components.
Fix a classifying connection $\Theta$ on the universal principal $G$-bundle $\pi_{EG}:EG \to BG$. 
Let $\pi:(P,\theta) \to G$ be principal $G$-bundle with connection and let $f:X \to BG$ be a classifying map for the bundle with connection.
Since $EG$ is contractible, universal characteristic classes for principal $G$-bundles vanish upon pull-back to the total space.
Thus any differential character on $X$ with characteristic class a universal characteristic class for principal $G$-bundles is topologically trivial along the bundle projection.

This holds in particular for the Cheeger-Simons character $\widehat{CW}_\theta(\lambda,u) \in \widehat H^{2k}(X;\Z)$: since $u \in H^{2k}(BG;\Z)$, we have 
$$
\pi^*c(\widehat{CW}_\theta(\lambda,u)) 
\stackrel{\eqref{eq:c_CW}}{=} 
\pi^*cl_\Z u 
= 
f^*\pi^*u 
=
0 \,.
$$
From the exact sequence \eqref{eq:long_ex_sequ} we conclude that $\widehat{CW}_\theta(\lambda,u)$ admits sections along $\pi$.
A \emph{canonical} such section will be obtained by the Cheeger-Chern-Simons construction below.  

\subsubsection{Prescribing the covariant derivative}
To begin with, we lift the map \mbox{$CW_\theta \times c_\Z$} from to $R^{2k}(\pi;\Z)$:

\begin{prop}\label{lem:lift_CCS}
Let $(P,\theta) \to X$ be a principal $G$-bundle with connection.
Then the Chern-Weil map $CW_\theta$ has a canonical natural lift $CCS_\theta$ such that the diagram
\begin{equation*}
\xymatrix{
&& R^{2k}(\pi;\Z) \ar[dd] \\
&& \\
K^{2k}(BG;\Z) \ar@{.>}[uurr]^{CCS_\theta} \ar[rr]_{CW_\theta \times c_\Z} && R^{2k}(X;\Z) 
} 
\end{equation*}
commutes.
\end{prop}

\begin{proof} 
Since $EG$ is contractible, the long exact sequence for the mapping cone complex of the bundle projection $\pi_{EG}:EG \to BG$ reads: 
$$
\cdots \to
\underbrace{H^{2k-1}(EG;\Z)}_{=\{0\}} 
\to
H^{2k}(\pi;\Z) 
\xrightarrow{\cong}
H^{2k}(BG;\Z) 
\to 
\underbrace{H^{2k}(EG;\Z)}_{=\{0\}} 
\to \cdots
$$
In particular, we obtain isomorphisms $p:H^{2k}(\pi_{EG};\Z) \xrightarrow{\cong} H^{2k}(BG;\Z)$.
For a universal characteristic class $u \in H^{2k}(BG;\Z)$, we denote by $\tilde u:= p^{-1}(u) \in H^{2k}(\pi_{EG};\Z)$ its pre-image under this isomorphism.

Now let $\pi:(P,\theta) \to X$ be a principal $G$-bundle with connection.
We define the lift $CCS_{\theta}: K^{2k}(BG;\Z) \to R^{2k}(\pi;\Z)$ by 
\begin{equation}
CCS_\theta(\lambda,u)
:=(CW_\theta(\lambda),CS_\theta(\lambda),cl_\Z(\tilde u)) \,.
\end{equation}
As above, $cl_\Z:H^{2k}(\pi_{EG};\Z) \to H^{2k}(\pi;\Z)$ denotes the pull-back with the classifying map $f:X \to BG$.
In terms of a universal connection $\Theta$ on $\pi_{EG}:EG \to BG$ we have:
\begin{align*}
CCS_\theta(\lambda,u)
&=
f^*CCS_\Theta(\lambda,u)\\
&=
f^*(CW_\Theta(\lambda),CS_\Theta(\lambda),\tilde u) \,.
\end{align*}
Clearly, the composition of the map $CCS_\theta$ with the forgetful map $R^{2k}(\pi;\Z) \to R^{2k}(X;\Z)$ yields the map $CW_\theta \times cl_\Z$.
By construction, the map $CCS_\theta$ is natural with respect to pull-back of bundles with connection by smooth maps $g:X' \to X$.

It remains to check that $CCS_\Theta$ indeed takes values in $R^{2k}(\pi;\Z)$.
Let $(\lambda,u) \in K^{2k}(G;\Z)$.
We show that $(CW_\Theta(\lambda),CS_\Theta(\lambda)) \in \Omega^{2k}(\pi_{EG})$ is $d_{\pi_{EG}}$-closed with integral periods:\footnote{This essentially follows from the proof of \cite[Prop.~3.15]{CS83}, but for convenience of the reader, we give the full argument.} 
By definition of the Chern-Weil and Chern-Simons forms, we have:
$$
d_{\pi_{EG}}(CW_\Theta(\lambda),CS_\Theta(\lambda))
=
(dCW_\Theta(\lambda),{\pi_{EG}}^*CW_\Theta(\lambda) - dCS_\Theta(\lambda)) 
=
0.
$$
Since $(\lambda,u) \in K^{2k}(G;\Z)$, the Chern-Weil form $CW_\Theta(\lambda)$ has inte\-gral periods.
The $\Ul$-valued cocycle $\exp(2\pi i \, CW_\Theta(\lambda)) \in Z^{2k}(BG;\Ul)$ vanishes on integral cycles and represents the trivial class in $H^{2k}(BG;\Ul) \cong \Hom(H_{2k}(BG;\Z),\Ul)$.
Hence there is a $\Ul$-valued cochain $w \in C^{2k-1}(BG;\Ul)$ satisfying $\exp(2\pi i \, CW_\Theta(\lambda)) = \delta w$.
We then have:
\begin{align*}
\delta (\pi^*w)
&= \pi^*(\delta w) \\
&= \pi^*(\exp(2\pi i \, CW_\Theta(\lambda)) \\
&= \exp(2\pi i \, \pi^*CW_\Theta(\lambda)) \\
&= \exp(2\pi i \, dCS_\Theta(\lambda) ) \\
&= \delta \exp(2\pi i \, CS_\Theta(\lambda)).
\end{align*}
Thus $(\exp(2\pi i \, CS(\lambda)) - \pi^*w)$ is a cocycle in $Z^{2k-1}(EG;\Ul)$.
Since the total space $EG$ is contractible, we find a cochain $v \in C^{2k-2}(EG;\Ul)$ such that \mbox{$(\exp(2\pi i CS(\lambda)) - \pi^*w) = \delta v$}.

Now let $(s,t) \in C_{2k}(\pi_{EG};\Z)$ be a relative cycle.
Then we have:
\begin{align*}
\exp\Big( 2\pi i \int_{(s,t)} (CW_\Theta(\lambda),CS_\Theta(\lambda)) \, \Big) 
&= (\delta w,\pi^*w + \delta v)(s,t) \\
&= (w,v)(\partial_\pi(s,t)) \\
&=1
\end{align*}
Clearly, this implies $\int_{(s,t)} (CW_\Theta(\lambda),CS_\Theta(\lambda)) \in\Z$.
Hence the pair of differential forms \mbox{$(CW_\Theta(\lambda),CS_\Theta(\lambda)) \in \Omega^{2k}(\pi_{EG})$} has integral periods.  
In particular, the image of the relative de Rham class $[CW(\lambda),CS(\lambda)]_{dR}$ under the de Rham isomorphism lies in the image of the reduction of coefficients map $H^{2k}(\pi_{EG};\Z) \to H^{2k}(\pi_{EG};\R)$, $\tilde u \mapsto \tilde u_\R$.

It remains to show that $(CW_\Theta(\lambda),CS_\Theta(\lambda))$ and $\tilde u_\R$ match in real cohomology, i.e.~that $dR(CW_\Theta(\lambda),CS_\Theta(\lambda)) = \tilde u_\R \in H^{2k}(\pi_{EG};\R)$.
This follows from the commutative diagram
\begin{equation*}
\xymatrix{
\Omega^{2k}_0(\pi_{EG}) \ar[rr]^(0.45){dR} \ar[d]_{pr_1} && H^{2k}(\pi_{EG};\R) \ar^p[d]_{\cong} && H^{2k}(\pi_{EG};\Z) \ar[ll] \ar[d]^{\cong} \\
\Omega^{2k}_0(BG) \ar[rr]_(0.45){dR} && H^{2k}(BG;\R) && H^{2k}(BG;\Z). \ar[ll] \\
}
\end{equation*}
Explicitly, we have $p(dR(CW_\Theta(\lambda),CS_\Theta(\lambda))) = dR(CW_\Theta(\lambda)) = u_\R$ from \eqref{eq:def:K2k}
and hence $dR(CW_\Theta(\lambda),CS_\Theta(\lambda)) = \tilde u_\R$. 
\end{proof}

\subsubsection{The Cheeger-Chern-Simons character}
In the same way as the Cheeger-Simons construction $\widehat{CW}_\theta$ lifts the map Chern-Weil construction $CW_\theta \times cl_\Z$ to a differential character valued map, there is a canonical natural lift of $CCS_\theta$ to a relative differential character valued map $\widehat{CCS}_\theta$.
The relative differential character $\widehat{CCS}_\theta(\lambda,u)$ trivializes the differential character $\widehat{CW}(\lambda,u)$ along the bundle projection $\pi$ like the Chern-Simons form $CS_\theta(\lambda)$ trivializes the Chern-Weil form $\pi^*CW(\lambda)$ in de Rham cohomology.
It is the unique natural section of the Cheeger-Simons character with prescribed covariant derivative equal to the Chern-Simons form. 

\begin{thm}[Cheeger-Chern-Simons construction]\label{thm:CCS}
Let $G$ be a Lie group with finitely many components.
Let $(\lambda,u) \in K^{2k}(G;\Z)$.
For any principal $G$-bundle with connection $\pi:(P,\theta) \to X$, there exists a unique relative differential character $\widehat{CCS}_\theta(\lambda,u) \in \widehat H^{2k}(\pi;\Z)$ such that the following holds:

\noindent
The curvature and covariant derivative of $\widehat{CCS}_\theta(\lambda,u)$ are given by the Chern-Weil and Chern-Simons form:
\begin{equation}\label{eq:CCS_comm_1}
(\curv,\cov)(\widehat{CCS}_\theta(\lambda,u)) 
= (CW_\theta(\lambda),CS_\theta(\lambda)) \,.  
\end{equation}
The \emph{Cheeger-Chern-Simons character} $\widehat{CCS}_\theta(\lambda,u)$ trivializes the Chern-Simons character $\widehat{CW}_\theta(\lambda,u)$ along the bundle projection $\pi$:
\begin{equation}\label{eq:CCS_comm_2}
\vds_\pi (\widehat{CCS}_\theta(\lambda,u)) = \widehat{CW}_\theta(\lambda,u) \,. 
\end{equation}
The \emph{Cheeger-Chern-Simons construction} $\widehat{CCS}_\theta$ is natural with respect to pull-back by smooth maps, i.e., for any smooth map $f:X' \to X$ and the pull-back bundle $f^*(P,\theta)$, we have:\footnote{Here we denote the pull-back along the pull-back diagram of $f:X \to X'$ simply by $f^*$. Strictly speaking, we would have $(f,F)^*$, where $F:f^*P \to P$ is the induced bundle map on the pull-back bundle. Likewise, the pull-back connection $f^*\theta$ is given by the connection $1$-form $F^*\theta \in \Omega^1(f^*P)$.}
\begin{equation}\label{eq:CCS_comm_3}
f^*\widehat{CCS}_\theta(\lambda,u) = \widehat{CCS}_{f^*\theta}(\lambda,u) \,. 
\end{equation}
From \eqref{eq:CCS_comm_1} and \eqref{eq:CCS_comm_2}, we obtain the commutative diagram:
\begin{equation}\label{eq:diag_CCS_CCS}
 \xymatrix{
&&& & \widehat H^{2k}(\pi;\Z) \ar[dd]^{(\curv,\cov,c)} \ar[dl]^{\vds_\pi} \\
K^{2k}(G;\Z) \ar[rrr]^(0.6){\widehat{CW}_\theta} \ar@{.>}[rrrrd]_{CCS_\theta} \ar[rrr] \ar@{.>}[rrrru]^{\widehat{CCS}_\theta} \ar[ddrrr]_{CW\times cl_\Z} &&& \widehat H^{2k}(X;\Z) \ar[dd]  & \\
 &&& & R^{2k}(\pi;\Z) \ar[dl] \\
 &&&  R^{2k}(X;\Z) \,. & 
} 
\end{equation}%
In particular, we have 
\begin{equation}\label{eq:CCS_CCS}
(\curv,\cov,c)(\widehat{CCS}_\theta(\lambda,u)) 
=
CCS_\theta(\lambda,u)
=
(CW_\theta(\lambda),CS_\theta(\lambda),u(P))\,.
\end{equation}
\end{thm}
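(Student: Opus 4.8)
The plan is to build $\widehat{CCS}_\theta(\lambda,u)$ first on the universal bundle and then to propagate it by naturality, using the identity $\iota(CS_\Theta(\lambda)) = \pi_{EG}^*\widehat{CW}_\Theta(\lambda,u)$ from Section~\ref{sec:Chern-Simons}. I will rely throughout on the structure of relative characters recalled in Appendix~\ref{app:characters}: the map $(\curv,\cov,c)\colon \widehat H^{2k}(\pi;\Z)\to R^{2k}(\pi;\Z)$, the projection $\vds_\pi\colon \widehat H^{2k}(\pi;\Z)\to\widehat H^{2k}(X;\Z)$, and the identification of $\ker(\curv,\cov)$ with the group $H^{2k-1}(\pi;\Ul)$ of flat relative characters. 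I would prove uniqueness first, as it simultaneously furnishes the well-definedness of the universal construction.

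For uniqueness one must observe that conditions \eqref{eq:CCS_comm_1} and \eqref{eq:CCS_comm_2} alone do not pin down a character on a fixed bundle: the difference of two solutions lies in $H^{2k-1}(\pi;\Ul)$ and is annihilated by $\vds_\pi$, and such characters form a torsor modelled on $H^{2k-2}(P;\Ul)/\pi^*H^{2k-2}(X;\Ul)$, which is generically nontrivial. It is exactly naturality \eqref{eq:CCS_comm_3} that removes this ambiguity. Indeed, any natural family satisfying \eqref{eq:CCS_comm_1}--\eqref{eq:CCS_comm_2} equals the pull-back $f^*\widehat{CCS}_\Theta(\lambda,u)$ of its value on the universal bundle along a classifying map $f\colon X\to BG$ with $f^*(EG,\Theta)\cong(P,\theta)$, whose existence was recorded in Section~\ref{sec:universal}. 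Hence it suffices to prove uniqueness on $\pi_{EG}\colon EG\to BG$. There, the difference of two solutions lies in $H^{2k-1}(\pi_{EG};\Ul)$ and is killed by $\vds_\pi$; but the mapping-cone sequence with $\Ul$-coefficients together with $H^{*}(EG;\Ul)=\{0\}$ shows that $\vds_\pi\colon H^{2k-1}(\pi_{EG};\Ul)\to H^{2k-1}(BG;\Ul)$ is an isomorphism for every $k\geq1$. Thus the difference vanishes, and the natural family is unique.

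For existence, recall from Section~\ref{sec:Chern-Simons} that $CS_\Theta(\lambda)$ is a topological trivialization of $\pi_{EG}^*\widehat{CW}_\Theta(\lambda,u)$. By the correspondence between sections of an absolute character along $\pi_{EG}$ and topological trivializations of its pull-back \cite{BB13}, this trivialization defines a relative character $\widehat{CCS}_\Theta(\lambda,u)\in\widehat H^{2k}(\pi_{EG};\Z)$ with $\vds_\pi(\widehat{CCS}_\Theta(\lambda,u))=\widehat{CW}_\Theta(\lambda,u)$ and $\cov(\widehat{CCS}_\Theta(\lambda,u))=CS_\Theta(\lambda)$. Its curvature is then $\curv(\widehat{CW}_\Theta(\lambda,u))=CW_\Theta(\lambda)$, and since $\vds_\pi$ intertwines characteristic classes its own characteristic class must be $\tilde u$; so $(\curv,\cov,c)(\widehat{CCS}_\Theta(\lambda,u))=CCS_\Theta(\lambda,u)$, the value supplied by Proposition~\ref{lem:lift_CCS}. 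For a general bundle I would set $\widehat{CCS}_\theta(\lambda,u):=f^*\widehat{CCS}_\Theta(\lambda,u)$. Independence of the classifying map follows from homotopy invariance of the pull-back: two classifying maps of a bundle with connection are smoothly homotopic through such maps, and since $\curv$ and $\cov$ of $\widehat{CCS}_\Theta$ are determined by the connection, the correction term in the homotopy formula (fiber integration over $[0,1]$) vanishes; equivalently, both pull-backs satisfy \eqref{eq:CCS_comm_1}--\eqref{eq:CCS_comm_2} and hence agree by the uniqueness above.

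It then remains to read off the stated properties. Naturality \eqref{eq:CCS_comm_3} holds by construction; \eqref{eq:CCS_comm_1} follows from $F^*CS_\Theta(\lambda)=CS_\theta(\lambda)$ and $f^*CW_\Theta(\lambda)=CW_\theta(\lambda)$ together with naturality of $\curv$ and $\cov$; and \eqref{eq:CCS_comm_2} follows from naturality of $\vds_\pi$ and $f^*\widehat{CW}_\Theta(\lambda,u)=\widehat{CW}_\theta(\lambda,u)$. Combining these with Proposition~\ref{lem:lift_CCS}, and using that the characteristic class projects under $p$ to $c(\widehat{CW}_\theta(\lambda,u))=u(P)$, yields the commutative diagram \eqref{eq:diag_CCS_CCS} and the formula \eqref{eq:CCS_CCS}. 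I expect the main obstacle to be the uniqueness statement itself: it cannot hold on a single bundle from \eqref{eq:CCS_comm_1}--\eqref{eq:CCS_comm_2} alone, so the argument must exploit naturality to descend to $EG$, where the vanishing of flat relative characters rests on the contractibility of $EG$ and the mapping-cone sequence with $\Ul$-coefficients; a secondary technical point is the homotopy invariance underlying well-definedness of the universal pull-back.
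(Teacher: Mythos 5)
Your overall architecture is the paper's: reduce to the universal bundle via naturality, construct the character on $\pi_{EG}:EG\to BG$, and pull back along a classifying map. The mechanism on the universal bundle is genuinely different, though. For uniqueness the paper shows $H^{2k-1}(\pi_{EG};\R)=\{0\}$ (using $H^{\mathrm{odd}}(BG;\R)=\{0\}$, i.e.\ Cartan's theorem) so that $(\curv,\cov,c)$ is an isomorphism by \eqref{eq:sequ_R}, and then pins down the characteristic class via \eqref{eq:CCS_comm_2}; you instead note that the difference of two solutions is a flat relative character annihilated by $\vds_{\pi_{EG}}$, and that $\vds_{\pi_{EG}}$ is injective on flat relative characters because $H^{2k-2}(EG;\Ul)$ contributes nothing to the mapping cone sequence. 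This is correct (modulo the degenerate case $k=1$, where $H^0(EG;\Ul)\neq 0$ and one must argue that the connecting map is zero) and buys you independence from the structure theory of $H^*(BG;\R)$. For existence the paper explicitly inverts $(\curv,\cov,c)$, while you invoke the existence of a section with prescribed covariant derivative from \cite{BB13}; the paper's remark following the theorem confirms this route, but the hypothesis of that result --- integrality of the periods of the pair $(CW_\Theta(\lambda),CS_\Theta(\lambda))$ --- is precisely the content of Proposition~\ref{lem:lift_CCS}, which you should cite at that point rather than only at the end.

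The weak step is your argument for independence of the classifying map. Your fallback clause --- that both pull-backs satisfy \eqref{eq:CCS_comm_1}--\eqref{eq:CCS_comm_2} and hence agree ``by the uniqueness above'' --- contradicts your own (correct) opening observation that on a fixed bundle these two conditions determine the character only up to the generically nontrivial group $H^{2k-2}(P;\Ul)/\pi^*H^{2k-2}(X;\Ul)$; your uniqueness was proved only for natural families, which presupposes exactly the well-definedness at issue. The primary argument is also incomplete: even granting a homotopy through classifying maps for the same connection, the correction term in \eqref{eq:homotopy_rel} is $\iota_\pi$ applied to a $d_\pi$-closed pair of forms --- the slice-wise constancy of curvature and covariant derivative makes the fiber integral closed (indeed exact in the first slot), not zero --- and $\iota_\pi$ kills such a pair only if it has integral periods, which you do not address. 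To be fair, the paper's proof simply fixes one classifying map per bundle and is silent on this point, so you are not worse off than the source; but since you raised the issue you should either close it or explicitly flag it as inherited from the Cheeger--Simons construction.
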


\begin{proof}
We first prove uniqueness.
By the requirement \eqref{eq:CCS_comm_3} that the Cheeger-Chern-Simons construction be natural with respect to pull-back of principal $G$-bundles with connections, it is uniquely determined by the map $\widehat{CCS}_\Theta: K^{2k}(G;\Z) \to \widehat H^{2k}(\pi_{EG};\Z)$ on the universal principal $G$-bundle $\pi_{EG}:EG \to BG$ with a fixed universal connection $\Theta$.
We show that this map is uniquely determined by \eqref{eq:CCS_comm_1} and \eqref{eq:CCS_comm_2}.

It is well-known that $H^{2k-1}(BG;\R) = \{0\}$ for any $k \geq 1$.
For convenience if the reader we briefly sketch the argument:
By assumption, $G$ has finitely many components.
Thus it has a maximal compact subgroup $K$ such that $K \subset G$ is a homotopy equivalence \cite[Ch.~XV.3]{H65}.
The induced map of classifying spaces yields isomorphisms $H^*(BG;\R) \cong H^*(BK;\R)$.
Let $K_0 \subset K$ be the connected component.
The induced map $BK_0 \to BK$ is a finite covering.
Let $T \subset K_0$ be a maximal torus.
Then we have $H^*(BK_0;\R) \cong H^*(BT;\R)^W$, where $W=$ is the Weyl group of $K_0$ \cite{H65}.
Since $H^*(BT;\R)$ is an exterior algebra in even degree generators, also $H^*(BG;\R)$ is generated by elements of even degree.

Now consider the long exact sequence of the mapping cone complex of the bundle projection $\pi_{EG}:EG \to BG$: 
\begin{equation*}
\cdots \to \underbrace{H^{2k-2}(EG;\R)}_{=\{0\}} \to H^{2k-1}(\pi_{EG};\R) \to \underbrace{H^{2k-1}(BG;\R)}_{=\{0\}} \to H^{2k-1}(EG;\R) \to \cdots
\end{equation*}
Thus $H^{2k-1}(\pi_{EG};\R) = \{0\}$ and the exact sequence \eqref{eq:sequ_R} reads:
\begin{equation}
0
\to 
\underbrace{\frac{H^{2k-1}(\pi_{EG};\R)_{\phantom{\R}}}{H^{2k-1}(\pi_{EG};\Z)_\R}}_{=\{0\}}
\to
\widehat H^{2k}(\pi_{EG};\Z) 
\xrightarrow{(\curv,\cov,c)}
R^{2k}(\pi_{EG};\Z) 
\to 
0 \,.\label{eq:ex_sequ_R_EG}
\end{equation}
Hence the map $(\curv,\cov,c): \widehat H^{2k}(\pi_{EG};\Z) \to R^{2k}(\pi_{EG};\Z)$ is an isomorphism.

Now let $\widehat{CCS}_\Theta:K^{2k}(G;\Z) \to \widehat H^{2k}(\pi_{EG};\Z)$ be any map satisfying \eqref{eq:CCS_comm_1} and \eqref{eq:CCS_comm_2}.
Let $(\lambda,u) \in K^{2k}(BG;\Z)$ and $\tilde u \in H^{2k}(\pi_{EG};\Z)$ as in the proof of Lemma~\ref{lem:lift_CCS}.
By \eqref{eq:CCS_comm_2} we have $c(\vds (\widehat{CCS}_\Theta(\lambda,u))) = c(\widehat{CW}(\lambda,u)) =u$.
The isomorphism $p:H^{2k}(\pi_{EG};\Z) \to H^{2k}(BG;\Z)$, $\tilde u \mapsto u$, from the mapping cone exact sequence yields the identification $c(\widehat{CCS}_\Theta(\lambda,u)) = \tilde u$.
Together with \eqref{eq:CCS_comm_1}, we obtain 
$\widehat{CCS}_\Theta(\lambda,u) = (\curv,\cov,c)^{-1} (CW_\Theta(\lambda),CS_\Theta(\lambda),\tilde u)$.
Thus the Cheeger-Chern-Simons map $\widehat{CCS}_\Theta: K^{2k}(G;\Z) \to \widehat H^{2k}(\pi_{EG};\Z)$ for a universal bundle with universal connection $\pi_{EG}:(EG,\Theta) \to BG$ is the unique lift in the diagram
\begin{equation*}
\xymatrix{
&& \widehat H^{2k}(\pi_{EG};\Z) \ar[d]^{(\curv,\cov,c)} \\
K^{2k}(G;\Z) \ar[rr]_{CCS} \ar@{.>}[rru]^{\widehat{CCS}} && R^{2k}(\pi_{EG};\Z) \,. 
} 
\end{equation*}
In other words, $\widehat{CCS} = (\curv,\cov,c)^{-1} \circ CCS$.

To prove existence, we define the Cheeger-Chern-Simons map by the above formula and show that this construction satisfies the requirements: 
Let $\pi:(P,\theta) \to X$ be a principal $G$-bundle with connection.
Fix a classifying map $f:X \to BG$ such that $(P,\theta) = f^*(EG,\Theta)$.
Then put:
\begin{align}\label{eq:def_CCS}
\widehat{CCS}_\theta(\lambda,u) 
&:= 
f^*\widehat{CCS}_\Theta(\lambda,u) \notag \\
&=
f^*\big((\curv,\cov,c)^{-1}(CW_\Theta(\lambda),CS_\Theta(\lambda),\tilde u)\big) \,.
\end{align}
By the very definition, the construction is natural with respect to pull-back of $G$-bundles with connection, hence it satisfies \eqref{eq:CCS_comm_3}.
Moreover, 
\begin{align*}
(\curv,\cov)(\widehat{CCS}_\theta(\lambda,u)) 
&= 
f^*(\curv,\cov)(\widehat{CCS}_\Theta(\lambda,u)) \\
&= 
f^*(CW_\Theta(\lambda),CS_\Theta(\lambda)) \\
&=
(CW_\theta(\lambda),CS_\theta(\lambda)) \,. 
\end{align*}
This shows \eqref{eq:CCS_comm_1}.

Exactness of the sequence 
$$
0
\to
\underbrace{\frac{H^{2k-1}(BG;\R)\phantom{_\R}}{H^{2k-1}(BG;\Z)_\R}}_{=0} 
\to
\widehat H^{2k}(BG;\Z)
\to
R^{2k}(BG;\Z)
\to
0
$$
implies that the Cheeger-Simons character $\widehat{CW}_\Theta(\lambda,u)$ is uniquely determined by its curvature $\curv(\widehat{CW}_\Theta(\lambda,u)) = CW(\lambda)$ and characteristic class $c(\widehat{CW}_\Theta(\lambda,u)) = u$.
Since $c(\vds(\widehat{CCS}_\Theta(\lambda,u))) = u$ and $\curv(\vds(\widehat{CCS}_\Theta(\lambda,u))) = CW(\lambda)$ we conclude $\vds(\widehat{CCS}_\Theta(\lambda,u)) = \widehat{CW}_\Theta$.
By naturality of the Cheeger-Simons construction, we thus obtain:
$$
\vds(\widehat{CCS}_\theta) 
= 
\vds(f^*\widehat{CCS}_\Theta) 
=
f^*\vds(\widehat{CCS}_\Theta) 
= 
f^*(\widehat{CW}_\Theta)
=
\widehat{CW}_\theta \,.
$$
This proves \eqref{eq:CCS_comm_2}.

Finally, diagram \eqref{eq:diag_CCS_CCS} and formula \eqref{eq:CCS_CCS} are immediate from \eqref{eq:def_CCS}. 
\end{proof}

\begin{rem}
Existence of sections of the Cheeger-Simons character $\widehat{CW}_\theta(\lambda,u)$ with prescribed covariant derivative $CS_\theta(\lambda)$ also follows from \cite[Prop.~75]{BB13}, since the pair $(CW_\theta(\lambda),CS_\theta(\lambda)) \in \Omega^{2k}(\pi_{EG})$ is closed with integral periods. 
\end{rem}

\subsubsection{Multiplicativity}
It is well-known that the Cheeger-Simons construction is multiplicative: it defines a ring homomorphism $\widehat{CW}:K^{2*}(G;\Z) \to \widehat H^{2*}(X;\Z)$.
In \cite{BB13} we show that for any smooth map $\varphi:A \to X$, the graded group $\widehat H^*(\varphi;\Z)$ is a right module over the ring $\widehat H^*(X;\Z)$.
It is easy to see that the Cheeger-Chern-Simons construction is (almost) multiplicative with respect to this module structure:

\begin{prop}[Multiplicativity]
Let $\pi:(P,\theta) \to X$ be a principal $G$-bundle with connection.
Let $(\lambda_1,u_1)\in K^{2k_1}(G;\Z)$ and $(\lambda_2,u_2)\in K^{2k_2}(G;\Z)$.
Let $k:= k_1 + k_2$.
Then there exists a differential form $\varrho \in \Omega^{(2k-2)}(P)$ such that we have:
\begin{equation}\label{eq:CCS_mult}
\widehat{CCS}_\theta(\lambda_1,u_1) * \widehat{CW}_\theta(\lambda_2,u_2)
=
\widehat{CCS}_\theta(\lambda_1\cdot \lambda_2,u_1 \cup u_2) + \iota_\pi(0,\varrho). 
\end{equation}
\end{prop}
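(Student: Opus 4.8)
The strategy is to reduce the multiplicativity statement to the universal bundle $(EG,\Theta)\to BG$, exactly as in the proof of Theorem~\ref{thm:CCS}, and then use the fact that on the universal bundle the map $(\curv,\cov,c)$ is an isomorphism, so that both sides of \eqref{eq:CCS_mult} are determined by their curvature, covariant derivative, and characteristic class. First I would recall how the module product $*$ behaves on curvature, covariant derivative, and characteristic class: by the properties of the ring/module structure on $\widehat H^*(\varphi;\Z)$ established in \cite{BB13}, the product of a relative character with an absolute character multiplies curvatures and characteristic classes in the expected way, while the covariant derivative of $\widehat{CCS}_\theta(\lambda_1,u_1)*\widehat{CW}_\theta(\lambda_2,u_2)$ will be $CS_\theta(\lambda_1)\wedge \pi^*CW_\theta(\lambda_2)$, up to the sign conventions of the module structure.

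Next I would compute $(\curv,\cov,c)$ of both sides. For the left-hand side, using multiplicativity of the Chern-Weil construction ($CW_\theta(\lambda_1)\wedge CW_\theta(\lambda_2)=CW_\theta(\lambda_1\cdot\lambda_2)$), multiplicativity of $\widehat{CW}$ \cite[Cor.~2.3]{CS83}, and cup-product on characteristic classes, one gets curvature $CW_\theta(\lambda_1\cdot\lambda_2)$, characteristic class $u_1\cup u_2$ (via the identification $p$ of the mapping cone sequence), and covariant derivative $CS_\theta(\lambda_1)\wedge\pi^*CW_\theta(\lambda_2)$. For the right-hand side, $\widehat{CCS}_\theta(\lambda_1\cdot\lambda_2,u_1\cup u_2)$ contributes curvature $CW_\theta(\lambda_1\cdot\lambda_2)$, covariant derivative $CS_\theta(\lambda_1\cdot\lambda_2)$, and characteristic class $u_1\cup u_2$ by \eqref{eq:CCS_CCS}, while the topologically trivial term $\iota_\pi(0,\varrho)$ contributes zero curvature, zero characteristic class, and covariant derivative $\varrho$ (up to sign, from the defining properties of $\iota_\pi$ in Appendix~\ref{app:characters}). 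Comparing, the curvatures and characteristic classes of both sides already agree, so the entire identity reduces to a single equation among differential forms on $P$:
\begin{equation*}
CS_\theta(\lambda_1)\wedge\pi^*CW_\theta(\lambda_2)
=
CS_\theta(\lambda_1\cdot\lambda_2) + \varrho \,.
\end{equation*}

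The point is then that this equation \emph{defines} the correction form $\varrho$; the content is to verify that the resulting $\varrho$ is a genuine form, i.e.\ that the two sides differ by an exact-modulo-topologically-trivial discrepancy that lands in the image of $\iota_\pi$. On the universal bundle, since $(\curv,\cov,c)$ is an isomorphism onto $R^{2k}(\pi_{EG};\Z)$, the difference of the two characters is a topologically trivial character with vanishing curvature, hence lies in the image of $\iota_{\pi_{EG}}$ and is represented by some $\varrho_\Theta\in\Omega^{2k-2}(EG)$; one then sets $\varrho:=f^*\varrho_\Theta=F^*\varrho_\Theta$ for a classifying map $f$ and pulls back the whole identity by naturality \eqref{eq:CCS_comm_3} together with naturality of $*$ and $\iota_\pi$.

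\textbf{The main obstacle.}
The delicate step is not existence but identifying $\varrho$ explicitly and controlling it, since $CS_\theta(\lambda_1)\wedge\pi^*CW_\theta(\lambda_2)$ and $CS_\theta(\lambda_1\cdot\lambda_2)$ are both natural primitives of $\pi^*CW_\theta(\lambda_1\cdot\lambda_2)$ but need not coincide: they agree in cohomology but differ by a closed form on $P$, and the word ``almost'' in the statement signals precisely that this difference $\varrho$ is generally nonzero. I would therefore check carefully that the two transgressed Chern-Simons forms are both $d_\pi$-closed primitives so their difference is closed, and that on the contractible $EG$ this closed form is exact and hence genuinely lies in the image of $\iota_{\pi_{EG}}$; the sign bookkeeping in the module product $*$ and in $\iota_\pi$ (following the orientation conventions of \cite[Ch.~4,7]{BB13}) is the part most likely to require care.
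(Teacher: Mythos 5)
Your approach is the same as the paper's: reduce to the universal bundle, where $(\curv,\cov,c)$ is injective on $\widehat H^{2k}(\pi_{EG};\Z)$, and compare curvature, covariant derivative and characteristic class of the two sides; the only content is then the failure of multiplicativity of the Chern--Simons form. The one slip is in your displayed reduction: since $d_\pi(0,\varrho)=(0,-d\varrho)$, the covariant derivative of $\iota_\pi(0,\varrho)$ is $-d\varrho$, not $\varrho$, so the equation pinning down $\varrho$ is $CS_\Theta(\lambda_1)\wedge\pi_{EG}^*CW_\Theta(\lambda_2)=CS_\Theta(\lambda_1\cdot\lambda_2)-d\varrho$ rather than $\cdots+\varrho$ (note the degrees: the difference of the two Chern--Simons expressions lives in $\Omega^{2k-1}(EG)$, while $\varrho$ must lie in $\Omega^{2k-2}(EG)$). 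Thus $\varrho$ is a \emph{primitive} of that difference, and its existence is exactly the observation you make at the end: both terms are primitives of $\pi_{EG}^*CW_\Theta(\lambda_1\cdot\lambda_2)$, so their difference is closed and hence exact on the contractible space $EG$. With that correction your argument coincides with the paper's proof.
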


\begin{proof}
It suffices to prove this for the universal $G$-bundle $\pi_{EG}:EG \to BG$ with universal connection $\Theta$.
Relative differential characters in $\widehat H^{2*}(\pi_{EG};\Z)$ are uniquely determined by their curvature, covariant derivative and characteristic class.
Hence it suffices to compare those data for the two sides of \eqref{eq:CCS_mult}.

The Chern-Weil map $CW_\Theta$ is multiplicative while the Chern-Simons map $CS_\Theta$ is multiplicative only up to an exact form. 
Thus there exists a differential form $\varrho \in \Omega^{(2k-2)}(EG)$ such that $CS_\theta(\lambda_1) \wedge \pi_{EG}^*CW_\theta(\lambda_2) = CS_\theta(\lambda_1 \cdot \lambda_2) - d\varrho$.
This yields:
\begin{align*}
(\curv,\cov)\big(\widehat{CCS}_\Theta&(\lambda_1,u_1) \,*\, \widehat{CW}_\Theta(\lambda_2,u_2)\big) \\
&=
(\curv,\cov)(\widehat{CCS}_\Theta(\lambda_1,u_1)) \wedge \pi_{EG}^*\curv(\widehat{CW}_\Theta(\lambda_2,u_2)) \\
&=
\big(CW_\Theta(\lambda_1) \wedge CW_\Theta(\lambda_2),CS_\Theta(\lambda_1) \wedge \pi_{EG}^*CW_\Theta(\lambda_2)\big) \\
&=
\big(CW_\Theta(\lambda_1 \cdot \lambda_2),CS_\Theta(\lambda_1 \cdot \lambda_2)\big) + d_{\pi_{EG}}(0,\varrho) \\
&=
(\curv,\cov)(\widehat{CCS}_\Theta(\lambda_1 \cdot \lambda_2)+\iota_{\pi_{EG}}(0,\varrho))  \\
\end{align*}
and
\begin{align*}
c(\widehat{CCS}_\Theta(\lambda_1,u_1) * \widehat{CW}_\Theta(\lambda_2,u_2))
&=
c(\widehat{CCS}_\theta(\lambda_1,u_1)) \cup \pi_{EG}^*c(\widehat{CW}_\Theta(\lambda_2,u_2)) \\
&= \tilde u_1 \cup \pi^* u_2 \\
&=
\widetilde{u_1 \cup u_2} \\
&= 
c(\widehat{CCS}_\Theta(\lambda_1\cdot \lambda_2,u_1 \cup u_2)+\iota_{\pi_{EG}}(0,\varrho)) \,. \qedhere
\end{align*}
\end{proof}

\subsection{The Chern-Simons action}\label{subsec:CS_action_bound}
In the same way as the Cheeger-Simons character $\widehat{CW}_\theta(\lambda,u)$ generalizes the classical Chern-Simons action along oriented closed $(2k-1)$-manifolds, the Cheeger-Chern-Simons character generalizes the classical Chern-Simons action along oriented manifolds with boundary: 

Let $\pi:(P,\theta) \to X$ be a principal $G$-bundle with connection and $(\lambda,u) \in K^{2k}(G;\Z)$.
Let $M$ be a compact oriented $(2k-1)$-manifold with boundary, and denote by $i_{\partial M}:\partial M \to M$ the inclusion of the boundary. 
Let $f:M \to X$ be a smooth map and $F: f^*P \to P$ the induced bundle map.
Let $\sigma:M \to f^*P$ be a smooth section of the pull-back bundle.
Put $g:= F \circ \sigma|_{\partial M}:\partial M \to P$.
This way we obtain a map of pairs $(M,\partial M)\xrightarrow{(f,g)}(X,P)$.
The character $(f,g)^*\widehat{CCS}_\theta(\lambda,u) \in \widehat H^{2k}(i_{\partial M};\Z)$ is topologically trivial, since $H^{2k}(i_{\partial M};\Z)=0$ for dimensional reasons.
We factorize the map $(f,g)$ as 
$$
\xymatrix{
& (f^*P,f^*P) \ar[dr]^{(\pi,\id)} & \\
(M,f^*P) \ar[ur]^{(\sigma,\id)} \ar[rr]_{(\id,\id)} && (M,f^*P) \ar[d]^{(f,F)} \\
(M,\partial M) \ar[u]^{(\id,\sigma|_{\partial M})} \ar[rr]_{(f,g)} && (X,P)
}
$$
Note that differential characters in $\widehat H^{2k}(\id_{f^*P};\Z)$ are uniquely determined by their covariant derivative.
Thus we have 
\begin{align*}
(\pi,\id_{f^*P})^*(f,F)^* \widehat{CCS}_\theta(\lambda,u) 
&= 
\iota_{\id}(F^*CS_\theta(\lambda),0) \\
\intertext{and hence}
(f,g)^*\widehat{CCS}_\theta(\lambda,u) 
&= 
\iota_{i_{\partial M}}(\sigma^*CS_{f^*\theta}(\lambda),0). \\
\intertext{This yields}
\big(\widehat{CCS}_\theta(\lambda,u)\big)((f,g)_*[M,\partial M]) 
&=
\big((f,g)^*\widehat{CCS}_\theta(\lambda,u)\big)[M,\partial M] \\
&=
\exp \Big( 2\pi i \int_M \sigma^*CS_{f^*\theta} \Big).
\end{align*}
Since the left hand side only depends upon $\sigma|_{\partial M}$, so does the right hand side.

\subsection{Dependence upon the connection}\label{subsec:depend_conn_1}
As above let $(P,\theta) \to X$ be a principal $G$-bundle with connection and $(\lambda,u) \in K^{2k}(G;\Z)$.
In this section we discuss the dependence of the Cheeger-Chern-Simons character $\widehat{CCS}_\theta(\lambda,u)$ upon the connection $\theta$.  
We first review the well-known dependencies of the Chern-Weil form $CW_\theta(\lambda)$, the Chern-Simons form $CS_\theta(\lambda)$ and the Cheeger-Simons character $\widehat{CW}_\theta(\lambda,u)$ upon the connection $\theta$.

Let $\theta_0,\theta_1 \in \mathcal A(P)$ be connections on $\pi:P \to X$.
As explained in Section~\ref{sec:Chern-Simons}, the Chern-Weil forms for the two connections differ by the differential of the Chern-Simons form: 
\begin{equation*}
CW_{\theta_1}(\lambda) - CW_{\theta_0}(\lambda) 
\stackrel{\eqref{eq:CS_two_CW}}{=}
d CS(\theta_0,\theta_1;\lambda). %\label{eq:CW_two_conn}
\end{equation*}
Analogously, we find for the Chern-Simons forms of the two connections:
\begin{align}
CS_{\theta_1}(\lambda) - CS_{\theta_0}(\lambda)
&=
d \Big(\underbrace{\fint_{[0,1]} CS_{(1-t)\theta_0 + t \theta_1}(\lambda)}_{:=-\alpha(\theta_0,\theta_1;\lambda)}\Big) - \fint_{[0,1]} dCS_{(1-t)\theta_0 + t \theta_1} \notag \\
&=
-d \alpha(\theta_0,\theta_1;\lambda) - \fint_{[0,1]} \pi^*CW_{(1-t)\theta_0 + t \theta_1} \notag \\
&\stackrel{\eqref{eq:def_CS_two}}{=}
-d \alpha(\theta_0,\theta_1;\lambda) + \pi^*CS(\theta_0,\theta_1;\lambda).
\end{align}
Combining the formulae for the Chern-Weil and Chern-Simons forms, we thus have
\begin{align}
(CW_{\theta_1}(\lambda),CS_{\theta_1}(\lambda)) - &(CW_{\theta_0}(\lambda),CS_{\theta_0}(\lambda)) \notag \\
&=
d_\pi(CS(\theta_0,\theta_1,\lambda),\alpha(\theta_0,\theta_1,\lambda)).
\end{align}
Now consider the Cheeger-Simons characters for two connections $\theta_0,\theta_1 \in \mathcal A(P)$.
Choose smooth classifying maps $f_i:X \to BG$, $i=0,1$, for the bundle with connection $\theta_i$.
Let $\theta(t):= (1-t) \theta_0 + t \theta_1$ be the straight line joining the two connections, and $f_t:X \to BG$ a smooth family of smooth classifying maps for the connections $\theta_t$, $t \in [0,1]$.
Then the map $F:X \times [0,1] \to BG$, $F(t,\cdot) := f_t$, is a smooth homotopy from $f_0$ to $f_1$.
The homotopy formula \eqref{eq:homotopy_abs} yields\footnote{Note that by the orientation conventions, we have $\fint_{[0,1]} \omega = (-1)^{k-1} \int_0^1 \omega_s ds$ for any $k$-form $\omega$.}:
\begin{align*}
\widehat{CW}_{\theta_1}(\lambda,u) - \widehat{CW}_{\theta_0}(\lambda,u)
&=
f_1^*\widehat{CW}_{\Theta}(\lambda,u) - f_0^*\widehat{CW}_{\Theta}(\lambda,u) \\
&\stackrel{\eqref{eq:homotopy_abs}}{=}
\iota \Big( \int_0^1 F^* CW_\Theta(\lambda) \Big) \\
&=
\iota \Big( -\fint_{[0,1]} CW_{(1-t)\theta_0 + t \theta_1}(\lambda) \Big) \\
&=
\iota \big( CS(\theta_0,\theta_1;\lambda)\big).
\end{align*}

We obtain the analogous result for dependence of the Cheeger-Chern-Simons character upon the connection:
\begin{prop}[Dependence upon the connection]
Let $\pi:P \to X$ be a principal $G$-bundle with connections $\theta_0$, $\theta_1 \in \mathcal A(P)$.
Let $(\lambda,u) \in K^{2k}(G;\Z)$.
Then we have:
\begin{equation}
\widehat{CCS}_{\theta_1}(\lambda,u) - \widehat{CCS}_{\theta_0}(\lambda,u)
=
\iota_\pi(CS(\theta_0,\theta_1;\lambda),\alpha(\theta_0,\theta_1;\lambda)). \label{eq:CCS_two_conn}
\end{equation} 
\end{prop}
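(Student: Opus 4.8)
The plan is to reproduce, in the relative setting, the computation just carried out for the absolute Cheeger--Simons character $\widehat{CW}_\theta(\lambda,u)$, replacing the homotopy formula \eqref{eq:homotopy_abs} by its mapping-cone analog. First I would interpolate between the two connections by a single bundle with connection: set $\bar P := P \times [0,1]$, $\bar X := X \times [0,1]$, let $\bar\pi : \bar P \to \bar X$ be the projection, and equip $\bar P$ with the connection $\bar\theta$ determined by the straight line $\theta(t) := (1-t)\theta_0 + t\theta_1$. For $s \in \{0,1\}$ let $j_s : X \hookrightarrow X \times [0,1]$, $x \mapsto (x,s)$, denote the two end inclusions, covered by the obvious bundle maps. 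Since $j_s^*(\bar P, \bar\theta) = (P, \theta_s)$, naturality \eqref{eq:CCS_comm_3} gives $\widehat{CCS}_{\theta_s}(\lambda,u) = j_s^* \widehat{CCS}_{\bar\theta}(\lambda,u)$, so the left-hand side of \eqref{eq:CCS_two_conn} is the difference of the two end restrictions of the single relative character $\widehat{CCS}_{\bar\theta}(\lambda,u) \in \widehat H^{2k}(\bar\pi;\Z)$.

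Next I would apply the relative homotopy formula (the mapping-cone analog of \eqref{eq:homotopy_abs}, cf.~\cite{BB13}) to $\widehat{CCS}_{\bar\theta}(\lambda,u)$, fiber integrating over $[0,1]$. Exactly as in the absolute case, this expresses the difference $j_1^* - j_0^*$ of the end restrictions as $\iota_\pi$ applied to the fiber integral over $[0,1]$ of the curvature and covariant derivative. By \eqref{eq:CCS_comm_1} the latter are the Chern--Weil and Chern--Simons forms $\big(CW_{\theta(t)}(\lambda), CS_{\theta(t)}(\lambda)\big)$ of the interpolating connection, so the difference equals $\iota_\pi$ of the pair of fiber integrals $\big(\fint_{[0,1]} CW_{\theta(t)}(\lambda), \fint_{[0,1]} CS_{\theta(t)}(\lambda)\big)$.

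It then remains to identify this pair with $\big(CS(\theta_0,\theta_1;\lambda), \alpha(\theta_0,\theta_1;\lambda)\big)$, which is precisely the content of the two definitions: $CS(\theta_0,\theta_1;\lambda) = -\fint_{[0,1]} CW_{(1-t)\theta_0 + t\theta_1}(\lambda)$ by \eqref{eq:def_CS_two}, and $\alpha(\theta_0,\theta_1;\lambda) = -\fint_{[0,1]} CS_{(1-t)\theta_0 + t\theta_1}(\lambda)$ by its defining equation above. Feeding these into the homotopy formula, with the sign relating fiber integration to the slice-wise integral handled exactly as in the derivation of $\widehat{CW}_{\theta_1}(\lambda,u) - \widehat{CW}_{\theta_0}(\lambda,u) = \iota(CS(\theta_0,\theta_1;\lambda))$ above, yields \eqref{eq:CCS_two_conn}.

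I expect the main obstacle to be the careful bookkeeping of signs and orientations in the mapping-cone fiber integration. The two components $CW$ and $CS$ sit in different degrees ($2k$ and $2k-1$), so the convention $\fint_{[0,1]}\omega = (-1)^{k-1}\int_0^1 \omega_s\,ds$ introduces opposite signs in the two slots; one must verify against the orientation conventions of \cite{BB13} that the relative homotopy formula reproduces both $CS(\theta_0,\theta_1;\lambda)$ and $\alpha(\theta_0,\theta_1;\lambda)$ with exactly the signs fixed by their definitions. A purely algebraic alternative would note that the difference has vanishing characteristic class (the two classifying maps for $P$ are homotopic, hence induce equal maps on mapping-cone cohomology), so it equals $\iota_\pi(\beta)$ for some $\beta$, and then pin $\beta$ down via the already-established identity $(CW_{\theta_1},CS_{\theta_1}) - (CW_{\theta_0},CS_{\theta_0}) = d_\pi(CS(\theta_0,\theta_1;\lambda),\alpha(\theta_0,\theta_1;\lambda))$; however, this only determines $\beta$ up to a $d_\pi$-closed form and leaves the integral-period ambiguity unresolved, which is exactly why the fiber-integration route, producing the trivializing form directly, is preferable.
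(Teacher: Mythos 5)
Your proposal is correct and follows essentially the same route as the paper: the paper likewise applies the relative homotopy formula \eqref{eq:homotopy_rel} to the straight-line interpolation $\theta_t=(1-t)\theta_0+t\theta_1$ (phrased there via a homotopy of classifying maps into $(BG,EG)$ rather than via the cylinder bundle $P\times[0,1]\to X\times[0,1]$, but this is only a difference in packaging), and then identifies the resulting pair of fiber integrals with $(CS(\theta_0,\theta_1;\lambda),\alpha(\theta_0,\theta_1;\lambda))$ from their definitions, with exactly the degree-dependent sign bookkeeping you flag.
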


\begin{proof}
As above choose classifying maps $f_t$ for the connections $\theta_t:=(1-t)\theta_0 + t \theta_1$ for $t \in [0,1]$.
Denote by $F:(X,P)\times [0,1] \to (BG,EG)$ the induced homotopy from $f_0$ to $f_1$. 
Using the homotopy formula \eqref{eq:homotopy_rel} for relative characters, we find:
\begin{align*}
\widehat{CCS}_{\theta_1}(\lambda,u) - \widehat{CCS}_{\theta_0}(\lambda,u)
&\stackrel{\phantom{\eqref{eq:homotopy_rel}}}{=}
f_1^*\widehat{CCS}_{\Theta}(\lambda,u) - f_0^*\widehat{CCS}_{\Theta}(\lambda,u) \\
&\stackrel{\eqref{eq:homotopy_rel}}{=}
\iota_\pi \Big(\! \int_0^1 (F^*CW_\Theta(\lambda),-F^*CS_\Theta(\lambda)) \!\Big) \\
&\stackrel{\phantom{\eqref{eq:homotopy_rel}}}{=}
\iota_\pi \Big(\! -\fint_{[0,1]} (CW_{(1-t)\theta_0 + t\theta_1},CS_{(1-t)\theta_0 + t\theta_1}(\lambda)) \!\Big) \\
&\stackrel{\phantom{\eqref{eq:homotopy_rel}}}{=}
\iota_\pi \Big(\! CS(\theta_0,\theta_1;\lambda),\alpha(\theta_0,\theta_1;\lambda) \!\Big). \qedhere
\end{align*}
\end{proof}

%%%%%%%%%%%%%%%%%%%%%%%%%%%%%%%%%%%%%%%%%%%%%%%%%%%%%%%%%%%%%%%%%%%%%%%%%
\section{Transgression}\label{sec:transgression}
%%%%%%%%%%%%%%%%%%%%%%%%%%%%%%%%%%%%%%%%%%%%%%%%%%%%%%%%%%%%%%%%%%%%%%%%%

In this section we discuss transgression of Cheeger-Simons characters.
On the one hand we have the usual transgression of absolute and relative characters to (free and based) loop spaces.
On the other hand, we derive a generalization of the transgression map in the universal principal $G$-bundle from integral cohomology to Cheeger-Simons characters.
The two transgressions coincide only topologically under the homotopy equivalence between $G$ and $\mathcal L_0(BG)$.
A further notion of transgression may be obtained from work of Murray and Vozzo \cite{MV10} in combination with fiber integration of differential characters.

\subsection{Transgression to loop space}
In \cite[Ch.~9]{BB13} we construct transgression of (absolute) differential characters on $X$ to mapping spaces and in particular to the free loop space $\LL X$:
$$
\tau: \widehat H^*(X;\Z) \to \widehat H^{*}(\LL X;\Z), \quad h \mapsto \widehat\pi_!(\ev^*h) \,.  
$$
Here $\ev:\LL X \times S^1 \to X$, $(\gamma,t) \mapsto \gamma(t)$, denotes the evaluation map and $\widehat\pi_!$ the fiber integration for the trivial bundle $\pi:\LL X \times S^1 \to \LL X$.
In \cite[II, Ch.~5]{BB13} we generalize the concept of fiber integration to relative differential characters.
For a smooth map $\varphi:A \to X$, we thus have the transgression map
$$
\tau: \widehat H^*(\varphi;\Z) \to \widehat H^{*}(\LL(X,A);\Z), \quad h \mapsto \widehat\pi_!(\ev^*h) \,.
$$   
Here $\LL(X,A)$ denotes the Fr\'echet manifold of (pairs of) smooth maps $\gamma: S^1 \to (X,A)$ and $\ev: \LL(X,A) \times S^1 \to (X,A)$, $(\gamma,t) \mapsto \gamma(t)$, denotes the evaluation map.

Note that both transgression maps have natural restrictions to maps with target the based loop spaces.
We will apply these transgression maps in Section~\ref{sec:diff_string} to the Cheeger-Chern-Simons characters.

\subsection{Transgression along smooth maps}
In this section we briefly review the notion of cohomology transgression along a smooth map $f:E \to X$.
Then we specialize to the case of a fiber bundle $\pi:E \to X$.
Note that for principal bundles, two different notions of transgression are present in the literature.
We follow the one used in \cite{CS74,HL74,CS83}, where transgression is a degree $-1$ homomorphism from a subset of the cohomology of the base into a quotient of the cohomology of the fiber.
This transgression is inverse to the one in \cite{B53}.
The former is called suspension by some authors, since it is closely related to the suspension isomorphism.
Moreover, it is related to the transgression to loop space as considered in the previous section.
We review these relations in Appendix~\ref{app:transgression} below.

Let $f:E \to X$ be a smooth map, and let $x \in \im(f)$.
Let $E_x:= f^{-1}(x)$ be the fiber over $x$.
Denote by $i_x: \{x\} \hookrightarrow X$ and $i_{E_x}: E_x \hookrightarrow E$ the natural inclusions.
The relative cohomology $H^*(E,E_x;\Z)$ is the same as the mapping cone cohomology $H^*(i_{E_x};\Z)$ of the inclusion, and similarly for $i_x$.
There are two ways to define the cohomology transgression
$$
T_f: H^*(X;\Z) \supset \dom(T) \to \frac{H^{*}(E_x;\Z)}{i_{E_x}^*H^{*}(E;\Z)},
$$
one using cohomology relative to the fiber $H^*(E,E_x;\Z)$, the other one using the mapping cone cohomology $H^*(f;\Z)$. 
We review both, since they differ in differential cohomology:

A cohomology class $u \in H^*(X;\Z)$, $*\geq 1$, is called \emph{transgressive} if $f^*u=0$. 
The domain of the transgression is the set of trangressive elements in $H^*(X;\Z)$.
By the commutative diagram
\begin{equation}\label{eq:diagram_T_F_rel}
\xymatrix{
&& H^*(X,\{x\};\Z) \ar@<-0.3ex>[d]_{f^*} \ar@<0.3ex>@{.>}[d] \ar[r] & H^*(X;\Z) \ar[d]_{f^*}  \ar@<0.6ex>@{.>}[l] \\
H^{*}(E;\Z) \ar[r] & H^{*}(E_x;\Z) \ar[r] & H^*(E,E_x;\Z) \ar[r] \ar@<0.6ex>@{.>}[l] & H^*(E;\Z),
}
\end{equation}
the set of transgressive classes maps under pull-back by $f$ to the image of $H^{*}(E_x;\Z) \to H^*(E,E_x;\Z)$.
The transgression is the mapping induced by the dashed lines in \eqref{eq:diagram_T_F_rel}.
More explicitly, it is the composition of maps:
\begin{equation}
H^*\!(\!X;\Z\!) \approx H^*\!(\!X,\!\{x\}\!;\Z\!) 
\supset \dom(T)
\xrightarrow{f^*}
H^*\!(\!E,E_x;\Z\!)
\xrightarrow{\id}
\frac{H^{*}\!(\!E_x;\Z\!)}{i_{E_x}^*H^{*}\!(\!E;\Z\!)}
\label{eq:def_T_f_rel}
\end{equation}
Equivalently, we may use the mapping cone cohomology of the map $f$.\footnote{See also \cite[Ch.~A]{BS08} for a similar treatment of transgression.}
Denote by $f_x:E_x \to \{x\}$ the restriction of $f$ to the fiber $E_x$.
By the commutative diagram
\begin{equation}\label{eq:diagram_T_F_cone}
\xymatrix{
H^{*}(E;\Z) \ar[r] \ar[d]_{i_{E_x}^*} & H^*(f;\Z) \ar[r] \ar@<-0.3ex>[d]_{i_{E_x}^*} \ar@<0.3ex>@{.>}[d] & H^*(X;\Z) \ar[d]_{i_x^*} \ar@<0.6ex>@{.>}[l] \ar[r]^{f^*} & H^*(E;\Z) \\
H^{*}(E_x;\Z) \ar[r]^{\approx} & H^*(f_x;\Z) \ar[r] \ar@<0.6ex>@{.>}[l] & H^*(\{x\};\Z), 
}
\end{equation}
the set of transgressive classes is the image of $H^*(f;\Z) \to H^*(X;\Z)$.
The transgression is the mapping induced by the dashed lines in \eqref{eq:diagram_T_F_cone}.
More explicitly, it is the composition of maps:
\begin{equation}
H^*\!(\!X;\Z\!)  
\supset \dom(T)
\xrightarrow{\approx}
\frac{H^*\!(\!f;\Z\!)}{H^{*}(\!E;\Z\!)}
\xrightarrow{i_{E_x}^*}
\frac{H^{*}\!(\!f_x;\Z\!)}{i_{E_x}^*H^{*}\!(\!E;\Z\!)}
\xrightarrow{-\id}
\frac{H^{*}\!(\!E_x;\Z\!)}{i_{E_x}^*H^{*}\!(\!E;\Z\!)}
\label{eq:def_T_f_cone}
\end{equation}
The easiest way to see that the two definitions of $T$ coincide is by describing them on the level of cocycles:
Let $\mu \in Z^k(X;\Z)$ be a cocycle representing the transgressive class $u \in H^*(X;\Z)$.
By assumption, $f^*u=0$, thus there exists a cochain $\nu \in C^{*}(E;\Z)$ such that $f^*\mu=\delta \nu$.
Moreover, since $i_x^*u \in H^*(\{x\};\Z) = \{0\}$, we find a cochain $\alpha \in C^{*}(\{x\};\Z)$ such that $\delta \alpha = i_x^*\mu$.
Thus the pair $(\mu,\alpha) \in Z^{*}(X,\{x\};\Z)$ represents the image of $u$ under the isomorphism $H^*(X;\Z) \to H^*(X,\{x\};\Z)$, whereas the pair $(\mu,\nu) \in Z^*(f;\Z)$ represents a pre-image of $u$ under the map $H^*(f;\Z) \to H^*(X;\Z)$.
Then we have
$$
\delta (f_x^*\alpha - i_{E_x}^*\nu)
= f_x^*\delta \alpha - i_{E_x}^*\delta\nu  
= f_x^*i_x^*\mu - i_{E_x}^*f^*\mu
= 0.
$$
Trangression maps the transgressive class $u = [\mu]$ to the equivalence class of the cohomology class $[f_x^*\alpha - i_{E_x}^*\nu] \in H^{*}(E_x;\Z)$ in the quotient $\frac{H^{*}(E_x;\Z)}{i_{E_x}^*H^{*}(E;\Z)}$.
The description by diagram \eqref{eq:diagram_T_F_rel} realizes this mapping through
\begin{equation*}
\xymatrix@C=4mm{
&& [\mu,\alpha] \ar@{|->}[d] & [\mu] \ar@{|->}[l] \\
[f_x^*\alpha - i_{E_x}^*\nu] & [(0,f_x^*\alpha-i_{E_x}^*\nu) - \delta_{i_{E_x}}(\nu,0)] \ar@{|->}[l] & f^*[\mu,\alpha] \ar@{=}[l] 
}
\end{equation*}
The decription by diagram \eqref{eq:def_T_f_cone} realizes this mapping through
\begin{equation*}
\xymatrix@C=4mm{
&& [\mu,\nu] \ar@{|->}[d] & [\mu] \ar@{|->}[l] \\
[f_x^*\alpha - i_{E_x}^*\nu] & [(0,-f_x^*\alpha+i_{E_x}^*\nu) + \delta_{f_x}(\alpha,0)] \ar@{|->}[l] & [i_x^*\mu,i_{E_x}^*\nu] \ar@{=}[l] 
}
\end{equation*}
Thus both descriptions \eqref{eq:def_T_f_rel}, \eqref{eq:def_T_f_cone} of the transgression yield the same well-defined homomorphism 
$$
T_f: H^*(X;\Z) \supset \dom(T) \to \frac{H^{*}(E_x;\Z)}{i_{E_x}^*H^{*}(E;\Z)}, \qquad [\mu] \mapsto \big[ [f_x^*\alpha - i_{E_x}^*\nu] \big], 
$$
with $\nu \in C^{*}(E;\Z)$ and $\alpha \in C^{*}(\{x\};\Z)$ as above.

\begin{rem}[Naturality]
Transgression along smooth maps is natural with respect to mappings of pairs in the following sense: 
Let $f:E \to X$ and $f':E' \to X'$ be a smooth maps.
Let $\Phi:E \to E'$ and $\varphi:X \to X'$ be smooth maps such that the diagram
\begin{equation*}
\xymatrix{
E' \ar[r]^\Phi \ar_{f'}[d] & E \ar[d]^f \\
X' \ar[r]_\varphi & X
}
\end{equation*}
commutes.
Then the transgressions along $f$ and $f'$ are related through
$$
\Phi^* \circ T_f
=
T_{f'} \circ \varphi^*.
$$
\end{rem}

\begin{rem}\label{rem:T_BG}
Let $G$ be a Lie group and $\pi_{EG}:EG \to BG$ a universal principal $G$-bundle.
Let $x \in BG$ and identify the fiber $EG_x$ with $G$.
Since $EG$ is contractible, all cohomology classes on $BG$ are transgressive. 
Thus transgression along the bundle projection $\pi_{EG}$ is a group homomorphism
$$
T:=T_{\pi_{EG}}: H^*(BG;\Z) \to H^{*}(G;\Z).
$$
\end{rem}
In Section~\ref{sec:univ_transg}, we generalize this transgression homomorphism to a homomorphism of differential cohomology groups.

\subsection{Splitting results}\label{sec:split}
In this section, we show that for a compact Lie group, the even degree differential cohomology of the classifying space $BG$ splits into Cheeger-Simons characters and topologically trivial characters.
The arguments are analogous to those in \cite[Ch.~A.2]{BSS14}, where we derive a splitting of the curvature exact sequence for differential cohomology on a manifold $X$ of finite type. 
Contrary to the case of an arbitrary mani\-fold $X$, the splitting on $BG$ is canonical, once the universal connection is fixed.
For the odd degree differential cohomology of $BG$, we derive (non canonical) splittings into flat characters with torsion class and topologically trivial characters.

\begin{thm}\label{prop:split_even}
Let $G$ be a compact Lie group.
Let $\pi_{EG}:EG \to BG$ be a universal principal $G$-bundle with fixed universal connection $\Theta$.
Let $k \geq 2$.
Then we have canonical splittings
\begin{align}
\widehat H^{2k}(BG;\Z) &= \widehat{CW}_\Theta\big(\,K^{2k}(G;\Z)\,\big) \oplus \iota\big(\,\Omega^{2k-1}(BG)\,\big) \label{eq:split_BG} \\
\widehat H^{2k}(\pi_{EG};\Z) &= \widehat{CCS}_\Theta\big(\,K^{2k}(G;\Z)\,\big) \oplus \iota\big(\,\Omega^{2k-1}(\pi_{EG})\,\big). \label{eq:split_pi_EG}
\end{align}
These splittings are compatible, in the sense that the canonical homomorphism \mbox{$\vds_{\pi_{EG}}:\widehat H^{2k}(\pi_{EG};\Z) \to \widehat H^{2k}(BG;\Z)$} commutes with the projection to the factors.
\end{thm}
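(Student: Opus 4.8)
My plan is to deduce both splittings from the two curvature exact sequences, using that for a compact group the flat summands vanish. Since $G$ is compact, the argument recalled in the proof of Theorem~\ref{thm:CCS} gives $H^{2k-1}(BG;\R)=\{0\}$; feeding this, together with the contractibility of $EG$ (so $H^{2k-2}(EG;\R)=H^{2k-1}(EG;\R)=\{0\}$ for $k\geq 2$), into the mapping cone sequence yields $H^{2k-1}(\pi_{EG};\R)=\{0\}$ as well. Consequently the flat terms in the relevant exact sequences vanish, and the characteristic maps $(\curv,c)\colon\widehat H^{2k}(BG;\Z)\to R^{2k}(BG;\Z)$ and $(\curv,\cov,c)\colon\widehat H^{2k}(\pi_{EG};\Z)\to R^{2k}(\pi_{EG};\Z)$ become isomorphisms. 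Under these isomorphisms the Cheeger--Simons characters correspond to the pairs $(CW_\Theta(\lambda),u)$, the Cheeger--Chern--Simons characters to the triples $(CW_\Theta(\lambda),CS_\Theta(\lambda),\tilde u)$, and the topologically trivial characters $\iota(\Omega^{2k-1}(BG))$, $\iota_{\pi_{EG}}(\Omega^{2k-1}(\pi_{EG}))$ to the pairs $(d\beta,0)$, respectively the triples $(d\beta,\pi_{EG}^*\beta-d\gamma,0)$. It therefore suffices to exhibit the corresponding direct sum decompositions of the groups $R^{2k}(BG;\Z)$ and $R^{2k}(\pi_{EG};\Z)$.

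For the absolute case I would use the classical fact that the Chern--Weil homomorphism $CW\colon I^k(G)\to H^{2k}(BG;\R)$ is an isomorphism for compact $G$. Given $(\omega,w)\in R^{2k}(BG;\Z)$, set $u:=w$ and $\lambda:=CW^{-1}(u_\R)$, so that $(\lambda,u)\in K^{2k}(G;\Z)$. Then $\omega-CW_\Theta(\lambda)$ is closed with vanishing de Rham class, hence equals $d\beta$ for some $\beta\in\Omega^{2k-1}(BG)$, giving $(\omega,w)=(CW_\Theta(\lambda),u)+(d\beta,0)$. Injectivity of $CW$ shows that any element lying in both summands forces $u=0$ and $\lambda=0$, so the sum is direct. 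Transporting back along $(\curv,c)$ yields \eqref{eq:split_BG}.

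The relative case follows the same pattern, the one genuinely new step being an adjustment on the total space. Starting from $(\omega,\eta,w)\in R^{2k}(\pi_{EG};\Z)$, I would again set $u:=p(w)$, $\lambda:=CW^{-1}(u_\R)$ and, using that the iso $p$ identifies the base component of the relative de Rham class, write $\omega-CW_\Theta(\lambda)=d\beta$ on $BG$. The crucial observation is that the $(2k-1)$-form $\eta-CS_\Theta(\lambda)-\pi_{EG}^*\beta$ on $EG$ is closed: this is a short computation invoking the relative closedness relations $d\eta=\pi_{EG}^*\omega$ and $dCS_\Theta(\lambda)=\pi_{EG}^*CW_\Theta(\lambda)$ together with $\omega-CW_\Theta(\lambda)=d\beta$. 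Since $EG$ is contractible, this closed form is exact, say equal to $-d\gamma$ with $\gamma\in\Omega^{2k-2}(EG)$, and one obtains $(\omega,\eta,w)=(CW_\Theta(\lambda),CS_\Theta(\lambda),\tilde u)+(d\beta,\pi_{EG}^*\beta-d\gamma,0)$. Directness again follows from injectivity of $CW$ and of $p$. I expect this exploitation of the contractibility of $EG$ to produce $\gamma$ to be the main technical point; everything else is bookkeeping. This gives \eqref{eq:split_pi_EG}.

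Finally, for compatibility I would verify that $\vds_{\pi_{EG}}$ carries each summand of \eqref{eq:split_pi_EG} into the corresponding summand of \eqref{eq:split_BG}. On the first summand this is precisely \eqref{eq:CCS_comm_2}, namely $\vds_{\pi_{EG}}(\widehat{CCS}_\Theta(\lambda,u))=\widehat{CW}_\Theta(\lambda,u)$. On the second summand I would note, using the compatibility of $\vds_{\pi_{EG}}$ with curvature and characteristic class encoded in diagram \eqref{eq:diag_CCS_CCS}, that $\vds_{\pi_{EG}}(\iota_{\pi_{EG}}(\beta,\gamma))$ has curvature $d\beta$ and vanishing characteristic class; since $(\curv,c)$ is injective on $\widehat H^{2k}(BG;\Z)$, it must therefore coincide with $\iota(\beta)$. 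Hence $\vds_{\pi_{EG}}$ maps $\iota_{\pi_{EG}}(\Omega^{2k-1}(\pi_{EG}))$ into $\iota(\Omega^{2k-1}(BG))$. Because $\vds_{\pi_{EG}}$ respects both summands, it commutes with the projections to the factors, which completes the proof.
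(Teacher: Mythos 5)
Your proof is correct and follows essentially the same route as the paper: both reduce the statement to a splitting of $R^{2k}(BG;\Z)$ and $R^{2k}(\pi_{EG};\Z)$ via the isomorphisms $(\curv,c)$ and $(\curv,\cov,c)$ (which hold because $H^{2k-1}(BG;\R)=H^{2k-1}(\pi_{EG};\R)=\{0\}$), and both obtain that splitting from Cartan's theorem that $CW:I^k(G)\to H^{2k}(BG;\R)$ is an isomorphism, using contractibility of $EG$ to handle the covariant-derivative component in the relative case. Your explicit construction of $\gamma$ and the verification that $\vds_{\pi_{EG}}$ sends $\iota_{\pi_{EG}}(\beta,\gamma)$ to $\iota(\beta)$ merely spell out details the paper leaves implicit.
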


\begin{proof}
By work of Cartan \cite{C50} (see also \cite[Ch.~8]{D78}) the map $CW$ induced by the Chern-Weil construction on the universal bundle is an isomorphism:
\begin{equation*}
\xymatrix{
I^k(G) \ar^{CW_\Theta}[rr] \ar^{\approx}[drr]_{CW} && \Omega^{2k}_\mathrm{cl} \ar^{dR}[d] \\
&& H^{2k}(BG;\R).
}
\end{equation*}
The set of all Chern-Weil forms $CW_\Theta(I^k(G)) \subset \Omega^{2k}_\mathrm{cl}(BG)$ is a $b_{2k}(BG)$-dimensional subspace.
Since $b_{2k}(BG) = \dim(I^k(G)) < \infty$, it has topological complements.
Since the embedding $CW_\Theta(I^k(G)) \hookrightarrow \Omega^k_\mathrm{cl}(BG)$ factorizes the isomorphism $CW:I^k(G) \to H^{2k}(BG;\R)$, the space of exact forms $d\Omega^{2k-1}(BG) \subset \Omega^{2k}(BG)$ is a canonical candidate for the complement.

Likewise, the set of all Chern-Weil forms with integral periods $CW_\Theta(I^k_0(G)) \subset \Omega^{2k}_0(BG)$ is a finitely generated free $\Z$-submodule, complemented by $d\Omega^{2k-1}(BG)$. 
Thus we obtain the canonical splittings
\begin{align*}
\Omega^{2k}_\mathrm{cl}(BG) &= CW_\Theta(I^k(G)) \oplus d\Omega^{2k-1}(BG) \\
\Omega^{2k}_0(BG) &= CW_\Theta(I^k_0(G)) \oplus d\Omega^{2k-1}(BG). \\
H^{2k}(BG;\Z) &= CW(I^k_0(G)) \oplus \mathrm{Tor}(H^{2k}(BG;\Z)).
\end{align*}
Here $\mathrm{Tor}(H^{2k}(BG;\Z)) \subset H^{2k}(BG;\Z)$ denotes the torsion subgroup.
The above splittings induce a splitting of the set $R^{2k}(BG;\Z)$ of forms with integral periods and corresponding integer cohomology classes:
\begin{equation}\label{eq:split_R}
R^{2k}(BG;\Z) = \big((c,\curv) \circ \widehat{CW}_\Theta\big)(K^{2k}(G;\Z)) \oplus d\Omega^{2k-1}(BG) \times \{0\}. 
\end{equation}
By the exact sequence \eqref{eq:sequ_R} for absolute differential cohomology, the homomorphism $(\curv,c): \widehat H^{2k}(BG;\Z) \to R^{2k}(BG;\Z)$ is an isomorphism, since $H^{2k-1}(BG;\R)$ vanishes.
From the sequences \eqref{eq:short_ex_sequ}, we obtain the commutative diagram
\begin{equation*}
\xymatrix{
\frac{\Omega^{2k-1}(BG)}{\Omega^{2k-1}_0(BG)} \ar_{\iota}[d] \ar^d[r] & d\Omega^{2k-1}(BG) \ar[d] \\
\widehat H^{2k}(BG;\Z) \ar_\curv[r] & \Omega^{2k}_0(BG) .
}
\end{equation*}
with injective vertical maps and surjective horizontal maps.
The inverse of the homomorphism $(\curv,c)$ maps the first summand in \eqref{eq:split_R} to the space of all Cheeger-Simons characters $\widehat{CW}_\Theta(K^{2k}(G;\Z))$ and the second summand to the space of all topologically trivial characters $\iota(\Omega^{2k-1}(BG))$.
Thus we obtain the splitting \eqref{eq:split_BG}.

For relative differential cohomology, we obtain the splitting \eqref{eq:split_pi_EG} by the same reasoning, using in addition the isomorphisms $H^{2k}(\pi_{EG};\Z) \to H^{2k}(BG;\Z)$, $u \mapsto \tilde u$, and $((\curv,\cov),c):\widehat H^{2k}(\pi_{EG};\Z) \to R^{2k}(\pi_{EG};\Z)$.
By construction, the homomorphism $\vds_{\pi_{EG}}:\widehat H^{2k}(\pi_{EG};\Z) \to \widehat H^{2k}(BG;\Z)$ maps the Cheeger-Chern-Simons character $\widehat{CCS}_\Theta(\lambda,u)$ to the corresponding Cheeger-Simons character $\widehat{CW}_\Theta(\lambda,u)$.
Thus the splittings \eqref{eq:split_BG} and \eqref{eq:split_pi_EG} are compatible.
\end{proof}

Similarly, we can split the odd degree differential cohomology of $BG$.
Here we have no Cheeger-Simons characters.
The splitting neither depends upon the choice of a universal connection, nor is it canonical.
To obtain this splitting, we first split the subspace of flat characters in $\widehat H^n(BG;\Z)$.

\begin{prop}\label{prop:split_odd}
Let $G$ be a compact Lie group and $\pi_{EG}:EG \to BG$ a universal principal $G$-bundle.
Then the subspace of topologically trivial characters $\widehat H^n_\mathrm{triv.}(BG;\Z) = c^{-1}(0) = \iota(\Omega^{n-1}(BG)) \subset \widehat H^n(BG;\Z)$ has the following (non-canonical) splitting:\footnote{By abuse of notation, we denote the inclusion of flat characters $j:H^{n-1}(BG;\Ul) \to \widehat H^n(BG;\Z)$ and its composition with the coefficient homomorphism $H^{n-1}(BG;\R) \to H^{n-1}(BG;\Ul)$ by the same symbol.
}
\begin{align}
\widehat H^n_\mathrm{triv.}(BG;\Z) &\approx j\big(H^{n-1}(BG;\R)\big) \oplus d\Omega^{n-2}(BG). \label{eq:split_triv}%
\intertext{
For odd degree differential cohomology, we obtain the (non-canonical) splitting: 
}
\widehat H^{2k-1}(BG;\Z) &\approx j\big( H^{2k-2}(BG;\Ul) \big) \oplus d\Omega^{2k-2}(BG). \label{eq:split_BG_odd}
\end{align}
\end{prop}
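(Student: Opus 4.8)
The plan is to derive both decompositions from the two short exact sequences collected in \eqref{eq:short_ex_sequ}, namely the curvature sequence $0 \to H^{n-1}(BG;\Ul) \xrightarrow{j} \widehat H^n(BG;\Z) \xrightarrow{\curv} \Omega^n_0(BG) \to 0$ and the characteristic class sequence $0 \to \Omega^{n-1}(BG)/\Omega^{n-1}_0(BG) \xrightarrow{\iota} \widehat H^n(BG;\Z) \xrightarrow{c} H^n(BG;\Z) \to 0$, combined with the vanishing $H^{2k-1}(BG;\R)=\{0\}$ established in the proof of Theorem~\ref{prop:split_even}. In both cases the real work is to pin down the image and kernel of $\curv$ on the relevant subgroup; the direct sum decomposition is then obtained by choosing a linear section of the exterior differential, and this choice is precisely what makes the splittings non-canonical.

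For \eqref{eq:split_triv} I would start from the characteristic class sequence to write $\widehat H^n_\mathrm{triv.}(BG;\Z) = \iota(\Omega^{n-1}(BG)) \cong \Omega^{n-1}(BG)/\Omega^{n-1}_0(BG)$, on which $\curv \circ \iota = d$. Restricting the curvature to these characters yields a short exact sequence whose image is the space of exact $n$-forms $d\Omega^{n-1}(BG)$ and whose kernel is $\iota(\Omega^{n-1}_\cl(BG))$, the flat topologically trivial characters. The decisive step is to identify this kernel with $j(H^{n-1}(BG;\R))$: a flat character $j(\alpha)$, $\alpha \in H^{n-1}(BG;\Ul)$, has characteristic class equal to the image of $\alpha$ under the Bockstein of $0 \to \Z \to \R \to \Ul \to 0$, so it is topologically trivial exactly when $\alpha$ lies in the image of $H^{n-1}(BG;\R)$. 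Splitting the resulting sequence $0 \to j(H^{n-1}(BG;\R)) \to \widehat H^n_\mathrm{triv.}(BG;\Z) \xrightarrow{\curv} d\Omega^{n-1}(BG) \to 0$ by a linear section $s$ of $d\colon \Omega^{n-1}(BG) \to d\Omega^{n-1}(BG)$ and composing with $\iota$ gives \eqref{eq:split_triv}.

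For \eqref{eq:split_BG_odd} I would instead apply the curvature sequence to the full group $\widehat H^{2k-1}(BG;\Z)$, whose flat part is now all of $j(H^{2k-2}(BG;\Ul))$. The decisive simplification here is that $H^{2k-1}(BG;\R)=\{0\}$ forces every closed $(2k-1)$-form on $BG$ to be exact, hence to have vanishing (in particular integral) periods; thus $\Omega^{2k-1}_0(BG) = \Omega^{2k-1}_\cl(BG) = d\Omega^{2k-2}(BG)$, and the curvature sequence reduces to $0 \to H^{2k-2}(BG;\Ul) \xrightarrow{j} \widehat H^{2k-1}(BG;\Z) \xrightarrow{\curv} d\Omega^{2k-2}(BG) \to 0$. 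A linear section of $d\colon \Omega^{2k-2}(BG) \to d\Omega^{2k-2}(BG)$, lifted through $\iota$, is then a homomorphic section of $\curv$ and yields \eqref{eq:split_BG_odd}.

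I expect the main obstacle to be the precise bookkeeping of the kernels rather than the splitting itself: distinguishing the flat topologically trivial characters $j(H^{n-1}(BG;\R))$ of \eqref{eq:split_triv} from the full flat subgroup $j(H^{2k-2}(BG;\Ul))$ of \eqref{eq:split_BG_odd} rests on the Bockstein computation, and the reduction $\Omega^{2k-1}_0(BG)=d\Omega^{2k-2}(BG)$ rests on the vanishing of the odd real cohomology of $BG$. Once these are settled the decompositions are formal: they require only an algebraic complement of $\Omega^{*}_\cl(BG)$ in $\Omega^{*}(BG)$, i.e.\ a section of $d$, which always exists but admits no canonical choice, so that $\approx$ is an isomorphism of abelian groups built explicitly from $\iota$ with no appeal to topological complements or finiteness.
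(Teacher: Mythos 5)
Your argument is correct and takes essentially the same route as the paper's own proof: both split the rows of the exact sequences \eqref{eq:short_ex_sequ} by choosing a linear complement to $\Omega^{*}_\cl(BG)$ (equivalently a section of $d$), identify the kernel of the restricted curvature map with $j\big(H^{n-1}(BG;\R)\big)$ via the flat/topologically-trivial comparison, and use $H^{2k-1}(BG;\R)=\{0\}$ to reduce $\Omega^{2k-1}_0(BG)$ to $d\Omega^{2k-2}(BG)$. Note that your second summand $d\Omega^{n-1}(BG)$ in \eqref{eq:split_triv} --- the image of $\curv$ on $\widehat H^n_\mathrm{triv.}(BG;\Z)$ --- is the correct one; the exponent $n-2$ in the printed statement appears to be a typo, as the specialization $n=2k-1$ in \eqref{eq:split_BG_odd} confirms.
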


\begin{proof}
The splittings are obtained similarly to those in \cite[Ch.~A.2]{BSS14} for arbitrary manifolds of finite type:
For any degree $n \in \N$, let $F_n \subset \Omega^n(BG)$ be a linear complement to the subspace $\Omega^n_\mathrm{cl}(BG)$ of closed forms.  
The splitting $\Omega^n(BG) = \Omega^n_\mathrm{cl}(BG) \oplus F_n$ induces a splitting of the upper row of the commutative diagram 
\begin{equation*}
\xymatrix{
0 \ar[r] &
\frac{H^{n-1}(BG;\R)_{\phantom{\R}}}{H^{n-1}(BG;\Z)_\R} \ar[r] \ar[d] &
\frac{\Omega^{n-1}_\mathrm{cl}(BG)}{\Omega^{n-1}_0(BG)} \ar^d[r] \ar[d]_\iota & 
d\Omega^{n-1}(BG) \ar[r] \ar[d] & 
0 \\
0 \ar[r] & 
H^{n-1}(BG;\Ul) \ar[r]_j & 
\widehat H^{n-1}_\mathrm{triv.}(BG;\Z) \ar_\curv[r] & 
d\Omega^{n-1}(BG) \ar[r] &
0.
}
\end{equation*}
Using the exact sequences \eqref{eq:short_ex_sequ} and applying the vertical maps, we obtain the splitting $\widehat H^n_\mathrm{triv.}(BG;\Z) = j(H^{n-1}(BG;\R)) \oplus \iota(F^{n-2})$.
This yields \eqref{eq:split_triv}.

Now let $n = 2k-1$.
Since $H^\mathrm{odd}(BG;\R) = \{0\}$, any closed form of odd degree is exact. 
Replacing $\widehat H^{n-1}_\mathrm{triv.}(BG;\Z)$ by $\widehat H^{2k-1}(BG;\Z)$ and $\Omega^{2k-1}_0(BG)$ by $d\Omega^{2k-2}(BG)$ in the above diagram, we still get a diagram with exact rows.
Applying the vertical maps yields $\widehat H^{2k-1}(BG;\Z) = j(H^{2k-2}(BG;\R)) \oplus \iota(F^{2k-2})$ which in turn yields \eqref{eq:split_BG_odd}.
\end{proof}

\begin{rem}
In the proof of Proposition \ref{prop:split_odd} we did not use Cartans theorem.
Thus the proposition also holds for noncompact Lie groups with finitely many components.
\end{rem}

\subsection{Universal transgression in differential cohomology}\label{sec:univ_transg}
Let $G$ be a compact Lie group and $\pi_{EG}:EG  \to BG$ a universal principal $G$-bundle with fixed universal connection $\Theta$.
The goal of this section is to extend the cohomology transgression $T:H^{2k}(BG;\Z) \to H^{2k-1}(G;\Z)$ on a universal principal $G$-bundle of a compact Lie group to a transgression homomorphism
$\widehat T: \widehat H^{2k}(BG;\Z) \to \widehat H^{2k-1}(G;\Z)$ on differential cohomology.
To this end, we first introduce differential cohomology transgression $\widehat T_f$ along any smooth map $f:E \to X$ along the lines of \eqref{eq:diagram_T_F_cone} and \eqref{eq:def_T_f_cone}.
It is then straight forward to show that the differential cohomology transgression $\widehat T_f$ vanishes on products.
Finally we use the splitting \eqref{eq:split_BG} of $\widehat H^\mathrm{even}(BG;\Z)$ to show that in case $f = \pi_{EG}$, the transgression $\widehat T_{\pi_{EG}}$ has a canonical lift $\widehat T:\widehat H^\mathrm{even}(BG;\Z) \to \widehat H^\mathrm{odd}(G;\Z)$.

Before defining the differential cohomology transgression $\widehat T_f$, let us briefly motivate our approach:
As reviewed in Section~\ref{sec:transgression}, there are two equivalent ways to define the ordinary transgression on smooth singular cohomology: \eqref{eq:def_T_f_rel} uses the pull-back along a smooth map $f:E \to X$ of the relative cohomology $H^*(X,\{x\};\Z)$, whereas \eqref{eq:def_T_f_cone} uses the mapping cone sequence of the map $f$.
In case $f= \pi_{EG}$, the contractibility of $EG$ implies that all cohomology classes on $BG$ are transgressive and that transgression on $BG$ is a map to the cohomology of $G \approx EG_x$, see Remark~\ref{rem:T_BG}.
The situation changes when we replace smooth singular cohomology by differential cohomology in these approaches, since the long exact sequence \eqref{eq:long_ex_sequ} for absolute and relative differential cohomology contains only three differential cohomology groups.

Replacing singular cohomology groups by differential cohomology groups in \eqref{eq:diagram_T_F_rel} (with $f=\pi_{EG}$) results in a particularly small set of transgressive characters, namely those characters $h \in \widehat H^*(BG;\Z)$ satisfying $\pi_{EG}^*h=0$.
Since $EG$ is contractible, this condition is equivalent to $h$ being flat.
This is in contrast to singular cohomology, where any cohomology class on $BG$ is transgressive.
On the other hand, this version of differential cohomology transgression takes values in $\frac{\widehat H^*(EG_x;\Z)}{i_{E_x}^*j(H^*(EG;\Ul))} = \widehat H^*(EG_x;\Z)$.
Replacing singular cohomology groups by differential cohomology groups in \eqref{eq:diagram_T_F_cone} (with $f=\pi_{EG}$) yields a transgression map defined on the whole differential cohomology of $BG$ with values in the quotient $\frac{\widehat H^*(EG_x;\Z)}{i_{E_x}^*H^*(EG;\Z)}$.
We will show below that it has a canonical lift to $\widehat H^*(EG_x;\Z)$. 
For this reason, we find it more appropriate to use the mapping cone cohomology to define the differential cohomology transgression.

As with smooth singular cohomology, the differential cohomology transgression may be defined not just for the bundle projection $\pi_{EG}$ but for any smooth map $f:E \to X$:

\begin{defn}\label{def:transgression}
Let $f:E \to X$ be a smooth map and $x \in X$.
A differential character $h \in \widehat H^n(X;\Z)$ is said to be \emph{transgressive along $f$} iff it is topologically trivial along $f$, i.e.~$c(f^*h)=0$.
Denote by $\dom(\widehat T_f) \subset \widehat H^*(X;\Z)$ the set of characters which are transgressive along $f$. 
The \emph{differential cohomology transgression} 
\begin{equation*}
\widehat T_f: H^n(X;\Z) \supset \dom(\widehat T) \to \frac{\widehat H^{n-1}(E_x;\Z)}{i_{E_x}^*\widehat H^{n-1}(E;\Z)}
\end{equation*}
is the group homomorphism defined by the dashed lines in the commutative diagram 
\begin{equation}\label{eq:diagram_T_hat_f}
\xymatrix{
\widehat H^{n-1}(E;\Z) \ar[r] \ar[d]^{i_{E_x}^*} & \widehat H^n(f;\Z) \ar[r] \ar@<0.4ex>[d]^{i_{E_x}^*} \ar@<-0.4ex>@{.>}[d] & \widehat H^n(X;\Z) \ar[d]^{i_x^*} \ar@<-0.6ex>@{.>}[l] \ar[r]^{c \circ f^*} & H^n(E;\Z) \\
\widehat H^{*}(E_x;\Z) \ar[r]_{\ti_{f_x}} & \widehat H^*(f_x;\Z) \ar[r] \ar@<-0.6ex>@{.>}[l] & \widehat H^*(\{x\};\Z). 
}
\end{equation}
More explicitly, for a character $h \in \widehat H^n(X;\Z)$, the transgressed character $\widehat T_f(h)$ is given as follows: choose a relative character $h' \in \widehat H^n(f;\Z)$ such that $\vds_f(h')=h$.
Then put 
\begin{equation}\label{eq:def_T_hat}
\widehat T_f(h)
:= 
[-(\ti_{f_x})^{-1}(i_{E_x}^*h')] \in \frac{\widehat H^{n-1}(E_x;\Z)}{i_{E_x}^*\widehat H^{n-1}(E;\Z)}.          
\end{equation}
\end{defn}

\begin{rem}\label{rem:kernel_T_hat}
It is obvious from the definition that topologically trivial characters are transgressive and the transgression vanishes on those:
Let $h = \iota(\mu)$ for some $\mu \in \Omega^{n-1}(X)$.
Choose an arbitrary form $\nu \in \Omega^{2k-2}(E)$.
Then $\vds_f(\iota(\mu,\nu)) = \iota(\mu)$ and $i_x^*\mu=0$.
Thus $\ti_{\pi_x}(i_{E_x}^*\nu) = (0,i_{E_x}^*\nu) = (i_x,i_{E_x})^*(\mu,\nu)$.
Thus $\widehat T_f$ maps $\iota(\mu)$ to the equivalence class of $i_{E_x}^*\nu$ in the quotient $\frac{\widehat H^{n-1}(E_x;\Z)}{i_{E_x}^*\widehat H^{n-1}(E;\Z)}$, which is $0$. 
\end{rem}

As for smooth singular cohomology (or any generalized cohomology theory), the differential cohomology transgression is natural with respect to maps of pairs:

\begin{prop}[Naturality]
Let $f:E \to X$ and $f':E' \to X'$ be a smooth maps.
Let $\Phi:E \to E'$ and $\varphi:X \to X'$ be smooth maps such that the diagram
\begin{equation*}
\xymatrix{
E' \ar[r]^\Phi \ar_{f'}[d] & E \ar[d]^f \\
X' \ar[r]_\varphi & X
}
\end{equation*}
commutes.
Then the transgressions along $f$ and $f'$ are related through
\begin{equation}\label{eq:T_natural}
\Phi^* \circ \widehat T_f
=
\widehat T_{f'} \circ \varphi^*. 
\end{equation}
\end{prop}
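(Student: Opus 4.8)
The plan is to reduce the statement to two structural properties of the mapping cone construction that are available from \cite{BB13} and Appendix~\ref{app:characters}: functoriality of pull-back on mapping cone characters, and compatibility of this pull-back with the structure maps $\vds_f$ and $\ti_{f_x}$ of the long exact sequence \eqref{eq:long_ex_sequ}. With those in hand the argument is a diagram chase mirroring the cocycle-level computation for ordinary transgression.

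First I would fix compatible base points: choose $x' \in X'$ and set $x := \varphi(x')$, so that the commuting square $f \circ \Phi = \varphi \circ f'$ restricts to a morphism of fibers $\Phi_x : E'_{x'} \to E_x$ (indeed $f(\Phi(E'_{x'})) = \varphi(f'(E'_{x'})) = \{x\}$). The symbol $\Phi^*$ in \eqref{eq:T_natural} is then understood as the map $\Phi_x^*$ on the fiber quotients; it descends to the quotients because $\Phi_x^* \circ i_{E_x}^* = (i_{E_x} \circ \Phi_x)^* = (\Phi \circ i_{E'_{x'}})^* = i_{E'_{x'}}^* \circ \Phi^*$. I would also record that the domain condition is preserved: if $c(f^*h)=0$ then, using $(f')^*\varphi^* = (f\Phi)^* = \Phi^* f^*$, we get $c((f')^*\varphi^*h) = \Phi^* c(f^*h) = 0$, so $\varphi^* h \in \dom(\widehat T_{f'})$ and both sides of \eqref{eq:T_natural} are defined.

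Next, the commuting square is a morphism of arrows $(\Phi,\varphi)\colon f' \to f$, hence induces a pull-back $(\Phi,\varphi)^*\colon \widehat H^n(f;\Z) \to \widehat H^n(f';\Z)$ satisfying $\vds_{f'} \circ (\Phi,\varphi)^* = \varphi^* \circ \vds_f$. Thus if $h' \in \widehat H^n(f;\Z)$ is any lift of $h$ (which exists precisely by transgressivity), then $(\Phi,\varphi)^* h'$ is a lift of $\varphi^* h$, and I may use it to evaluate $\widehat T_{f'}(\varphi^* h)$ via \eqref{eq:def_T_hat}. The heart of the argument is the compatibility of fiber restriction with $(\Phi,\varphi)^*$. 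Viewing the fiber inclusions as morphisms of arrows $(i_{E_x},i_x)\colon f_x \to f$ and $(i_{E'_{x'}},i_{x'})\colon f'_{x'} \to f'$, the two composites $(\Phi,\varphi)\circ(i_{E'_{x'}},i_{x'})$ and $(i_{E_x},i_x)\circ(\Phi_x,\mathrm{const})$ agree as morphisms $f'_{x'} \to f$, since $\Phi \circ i_{E'_{x'}} = i_{E_x}\circ \Phi_x$ and $\varphi \circ i_{x'} = i_x \circ \mathrm{const}$. Functoriality of the pull-back then gives
\[
i_{E'_{x'}}^* \circ (\Phi,\varphi)^* = (\Phi_x,\mathrm{const})^* \circ i_{E_x}^* .
\]
Finally, naturality of the connecting map $\ti$ along the fiber morphism $(\Phi_x,\mathrm{const})$ yields $(\Phi_x,\mathrm{const})^* \circ \ti_{f_x} = \ti_{f'_{x'}} \circ \Phi_x^*$; since $\ti_{f_x}$ and $\ti_{f'_{x'}}$ are isomorphisms in the range where $\widehat T$ is defined (the reduced differential cohomology of the point $\{x\}$ vanishing there, as already used in \eqref{eq:def_T_hat}), this inverts to $\Phi_x^* \circ (\ti_{f_x})^{-1} = (\ti_{f'_{x'}})^{-1} \circ (\Phi_x,\mathrm{const})^*$. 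Chaining the three identities through \eqref{eq:def_T_hat} gives $\widehat T_{f'}(\varphi^* h) = \Phi_x^*\,\widehat T_f(h)$ in the quotient, which is \eqref{eq:T_natural}.

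I expect the only genuine obstacle to be bookkeeping rather than mathematics: one must establish (or cite precisely from Appendix~\ref{app:characters} and \cite{BB13}) that pull-back of mapping cone characters is functorial and strictly compatible with $\vds_f$ and $\ti_{f_x}$, and then check that every map in the chain descends to the quotients by $i_{E_x}^*\widehat H^{n-1}(E;\Z)$. The extra inputs beyond ordinary transgression are just the invertibility of $\ti_{f_x}$ and $\ti_{f'_{x'}}$ and the compatibility $\varphi(x') = x$ of base points; once these are fixed, the computation is formal.
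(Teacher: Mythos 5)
Your proof is correct and follows essentially the same route as the paper's: both check that transgressivity is preserved via $c(f'^*\varphi^*h)=\Phi^*c(f^*h)=0$, pull back a lift $h'$ with $\vds_f(h')=h$ along the morphism of arrows $(\Phi,\varphi)$, and then use compatibility of the fiber restrictions and the connecting maps $\ti_{f_x}$, $\ti_{f'_{x'}}$ with these pull-backs to chase the definition \eqref{eq:def_T_hat}. The only difference is that you spell out the intermediate compatibilities (functoriality of the mapping cone pull-back, invertibility of $\ti$ on the fiber, descent to the quotient) that the paper cites in one sentence.
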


\begin{proof}
Clearly, the maps $\Phi$ and $\varphi$ induce a map of diagrams as in \eqref{eq:diagram_T_hat_f}.
More explicitly, let $h \in \widehat H^n(X;\Z)$, transgressive along $f$.
Let $y \in X'$ and $x:=\varphi(y)$. 
Then $\varphi^*h$ is transgressive along $f'$, since $c({f'}^*\varphi^*h) = c(\Phi^*f^*h) = \Phi^* c(f^*h)=0$.
Let $h' \in \widehat H^n(f;\Z)$ with $\vds_f(h')=h$.
Since the maps $\vds_{f_x}$, $\vds_{f'_y}$ and $\ti_{f_x}$, $\ti_{f'_y}$ in the mapping cone sequence commute with pull-backs along $(\Phi,\varphi)$, we obtain: 
\begin{align*}
\widehat T_{f'}(\varphi^*h)
&=
[-(\ti_{f'_y})^{-1}(i_{E_y}^*(\Phi,\varphi)^*h')] \\
&=
[-(\ti_{f'_y})^{-1}(\Phi^* i_{E_x}^*h')] \\
&=
\Phi^* [-(\ti_{f_x})^{-1}(i_{E_x}^*h')] \\
&=
\Phi^*\widehat T_f(\varphi^*h). \qedhere
\end{align*}
\end{proof}

In the following, we restrict to the case of fiber bundles instead of arbitrary smooth maps. 
Recall that relative differential cohomology is a right module over the absolute differential cohomology ring and the characteristic class is a module homomorphism \cite[II, Ch.~4]{BB13}.
In particular, if $h_1 \in \widehat H^{k_1}(X;\Z)$ is transgressive in the bundle $\pi:E \to X$, then for any character $h_2 \in \widehat H^{k_2}(X;\Z)$, the internal product $h_1*h_2$ is transgressive in $E \to X$, whereas the external product $h_1 \times h_2$ is transgressive in the bundle $\pi \times \id_X: E \times X \to X \times X$.
Moreover, if $h_1' \in \widehat H^{k_1}(\pi;\Z)$ with $\vds_\pi(h_1') = h_1$, then $\vds_\pi(h_1'* h_2) = h_1 * h_2$ and $\vds_{\pi \times \id_X}(h_1'\times h_2) = h_1 \times h_2$.

Once the differential cohomology transgression is known to be natural with respect to bundle maps, it is straight forward to show that it vanishes on products.

\begin{prop}
Let $\pi:E \to X$ be a fiber bundle and $h_1,h_2 \in \widehat H^*(X;\Z)$.
If $h_1$ or $h_2$ is transgressive, then we have $\widehat T_\pi(h_1 * h_2)=0$.
\end{prop}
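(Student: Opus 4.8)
The plan is to exploit that the product is \emph{decomposable} and to compute the transgression directly from its defining formula \eqref{eq:def_T_hat}, using that the non-transgressive factor restricts to the base point. By graded commutativity of the product on $\widehat H^*(X;\Z)$ we may assume that $h_1$ is the transgressive factor. Being transgressive along $\pi$ means $c(\pi^*h_1)=0$, so $h_1$ admits a lift $h_1' \in \widehat H^*(\pi;\Z)$ with $\vds_\pi(h_1')=h_1$. As recalled just before the statement, the relative character $h_1'*h_2 \in \widehat H^*(\pi;\Z)$ then satisfies $\vds_\pi(h_1'*h_2)=h_1*h_2$, so $h_1*h_2$ is again transgressive and $h_1'*h_2$ is an admissible lift for computing $\widehat T_\pi(h_1*h_2)$ via Definition~\ref{def:transgression}.

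First I would rewrite the fiber restriction of this lift. The fiber inclusion $(i_{E_x},i_x)$ is a morphism from the bundle $\pi_x:E_x \to \{x\}$ to $\pi:E \to X$, so the induced pull-back $\widehat H^*(\pi;\Z) \to \widehat H^*(\pi_x;\Z)$ is a homomorphism of modules covering $i_x^*:\widehat H^*(X;\Z) \to \widehat H^*(\{x\};\Z)$ \cite{BB13}. Consequently
\[
i_{E_x}^*(h_1'*h_2) = (i_{E_x}^*h_1') * (i_x^*h_2) \in \widehat H^*(\pi_x;\Z),
\]
where now $i_x^*h_2 \in \widehat H^{*}(\{x\};\Z)$ is the restriction of $h_2$ to the base point. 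The key observation is that $\widehat H^n(\{x\};\Z)=\{0\}$ for $n \geq 2$; hence as soon as the non-lifted factor $h_2$ has degree $\geq 2$ we get $i_x^*h_2=0$, so $i_{E_x}^*(h_1'*h_2)=0$, and \eqref{eq:def_T_hat} gives $\widehat T_\pi(h_1*h_2)=\big[-(\ti_{\pi_x})^{-1}(0)\big]=0$.

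The structural content lies in making the two compatibilities precise, and I expect the degree bookkeeping to be the only genuinely delicate point. On the structural side one must invoke, beyond the module naturality of the fiber restriction, that $\ti_{\pi_x}$ is itself a module homomorphism, so that the product indeed passes through $(\ti_{\pi_x})^{-1}$; both facts are formal consequences of the constructions of \cite{BB13} and should be cited explicitly. The clean vanishing above uses $\widehat H^{\geq 2}(\{x\};\Z)=\{0\}$ for the \emph{restricted} factor, and this covers exactly the products of positive even degree Cheeger-Simons characters that occur in this paper, since there the restricted factor always has degree $\geq 2$. The remaining edge case, where the restricted factor sits in degree one so that $i_x^*h_2 \in \widehat H^1(\{x\};\Z)=\Ul$ need not vanish, must be analyzed separately: here the resulting contribution is a product of $\widehat T_\pi(h_1)$ with the constant character $\pi_x^*(i_x^*h_2)=i_{E_x}^*\pi^*h_2$ coming from the globally defined character $\pi^*h_2$ on $E$, and one has to track this product inside the quotient $\frac{\widehat H^*(E_x;\Z)}{i_{E_x}^*\widehat H^*(E;\Z)}$ by hand. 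This low-degree bookkeeping, rather than the module argument, is where I anticipate the real care is needed.
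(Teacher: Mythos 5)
Your proof is correct, and it rests on exactly the same reduction as the paper's: both arguments come down to the fact that the restriction of the non-lifted factor to the base point, $i_x^*h_2\in\widehat H^{k_2}(\{x\};\Z)$, vanishes. The only difference is in how the multiplicativity of the fiber restriction is justified: the paper writes $h_1*h_2=\Delta_X^*(h_1\times h_2)$ and invokes naturality of the transgression under the bundle map $E\to E\times X$ together with naturality of the \emph{external} product, whereas you apply the right-module naturality of $(i_x,i_{E_x})^*$ directly to the internal product $h_1'*h_2$. That is a legitimate shortcut (the paper itself uses precisely this module identity in the later remark on $\widehat T(h_1*h_2)$ for Cheeger--Simons characters), and it makes the external-product detour unnecessary; note also that you do not actually need $\ti_{\pi_x}$ to be a module homomorphism, since you already have $i_{E_x}^*(h_1'*h_2)=(i_{E_x}^*h_1')*0=0$ before applying $(\ti_{\pi_x})^{-1}$. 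Your worry about a degree-one edge case is moot in this paper: differential characters are only defined in degree $k\geq 2$ (Appendix~\ref{app:characters}), and $\widehat H^{k}(\{x\};\Z)=\{0\}$ for all $k\geq 2$, so the restricted factor always vanishes and no separate low-degree bookkeeping is required.
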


\begin{proof}
The proof resembles the one for singular cohomology in \cite[Ch.~A]{BS08}.
The argument basically relies on naturality of the external product and of the transgression:

Suppose $h_1 \in \widehat H^{k_1}(X;\Z)$ is transgressive. 
Choose $h_1' \in \widehat H^{k_1}(\pi;\Z)$ such that $\vds_\pi(h_1')=h_1$.
Let $\Delta_X:X \to X \times X$ be the diagonal map.
Then we have the pull-back diagram
\begin{equation*}
\xymatrix@C=70pt{
E \ar[r]^{(\id_E \times \pi) \circ \Delta_E} \ar[d]_\pi & E \times X \ar[d]^{\pi \times \id_X} \\
X \ar[r]_{\Delta_X} & X \times X.
}
\end{equation*}
On the right hand side, the inclusion of the fiber $E_x \times \{x\}$ over $(x,x)$ is given by the map $i_{E_x} \times i_x$.
By naturality of transgression and of the external product, we obtain: 
\begin{align*}
\widehat T_\pi(h_1 * h_2) 
&= 
\widehat T_\pi(\Delta_X^*(h_1 \times h_2)) \\ 
&\stackrel{\eqref{eq:T_natural}}{=}
\big((\id_E \times \pi) \circ \Delta_E\big)^* \widehat T_{\pi \times \id_X} (h_1' \times h_2) \\
\intertext{From \eqref{eq:def_T_hat}, applied to the bundle $E \times X \to X \times X$, we obtain\footnote{To simplifiy notation, we drop the index $i_{E_x} \times i_x$ of the map $\ti$ in the long exact sequence \eqref{eq:long_ex_sequ} of the restricted projection $(\pi \times \id_X)_{(x,x)}:E_x \times \{x\} \to \{(x,x)\}$.}:}
\widehat T_{\pi \times \id_X} (h_1' \times h_2)
&\stackrel{\eqref{eq:def_T_hat}}{=}
\big[\, -\ti\big(\, (i_{E_x} \times i_x)^*(h_1' \times h_2) \,\big) \,\big] \\
&=
\big[\, -\ti\big(\, i_{E_x}^*h_1' \times \underbrace{i_x^*h_2}_{=0} \,\big)\,\big] \\
&=
0.
\end{align*}
Clearly, this yields $\widehat T_\pi(h_1 * h_2)=0$. 

Now assume that $h_2$ is transgressive.
Since the internal product on $\widehat H^*(X;\Z)$ is graded commutative, we have $h_1 * h_2 = (-1)^{k_1 + k_2} h_2 * h_1$.
By the argument above, we obtain $\widehat T_\pi(h_1 * h_2)= (-1)^{k_1 + k_2} \widehat T_\pi(h_2 * h_1) = 0$. 
\end{proof}

We conclude this section by showing that in even degrees the universal differential cohomology transgression $\widehat T_{\pi_{EG}}$ has a canonical lift to a homomorphism with values in the odd degree differential cohomology of $G$.
This is due to the splitting \eqref{eq:split_BG}.
To motivate the construction of this lift, recall that the Chern-Simons construction \cite{CS74} represents the transgression on real cohomology:
for any $\lambda \in I^k_0(G)$, the restriction of the Chern-Simons form $CS_\Theta(\lambda) \in \Omega^{2k-1}(EG)$ to the fiber $EG_x \approx G$ is closed with de Rham cohomology class $[i_{EG_x}^*CS_\Theta(\lambda)]_{dR} = T([CW_\Theta(\lambda)]_{dR}) \in H^{2k-1}_{dR}(G;\Z)$.
In fact, this form neither depends upon the base point $x$ nor upon the choice of universal connection.
It may be expressed explicitly in terms of the Maurer-Cartan form of $G$, as explained in \cite[p.~55]{CS74}.

We thus expect to find a transgression homomorphism which maps a Cheeger-Simons character $\widehat{CW}_\Theta(\lambda,u) \in \widehat H^{2k}(BG;\Z)$ to a differential character on $G$ with curvature $i_{EG_x}^*CS_\Theta(\lambda)$ and characteristic class $T(u)$.

\begin{rem}
If $b_{2k-2}(G) = 0$, the exact sequence \eqref{eq:sequ_R} implies that differential characters in $\widehat H^{2k-1}(G;\Z)$ are uniquely determined by their characteristic class and curvature. 
Thus for any $(\lambda,u) \in K^{2k}(G;\Z)$ there exists a unique differential character $h \in \widehat H^{2k-1}(G;\Z)$ satisfying 
\begin{align*}
c(h) &= T(u) \\ 
\curv (h) &= i_{EG_x}^*CS_\Theta(\lambda).
\end{align*}
This character represents the equivalence class $\widehat T_{\pi_{EG}}(\widehat{CW}_\Theta(\lambda,u)) \in \frac{\widehat H^{2k-1}(G;\Z)}{\iota(\Omega^{2k-2}(G))}$.

The requirements are satisfied e.g.~in degrees $k=2,3$ on a compact, simply connected Lie group $G$:
By the Hopf theorem, $H^*(G;\R)$ is an exterior algebra in odd degree generators.
Now $\pi_1(G)= \{0\}$ implies $b_1(G)=0$ and hence $H^{2k-2}(G;\R) =\{0\}$ for $k = 2,3$. 
\end{rem}

Even if these requirements are not satisfied, we have a canonical construction of characters in $\widehat H^\mathrm{odd}(BG;\Z)$ from Cheeger-Simons characters in $\widehat H^\mathrm{even}(BG;\Z)$:

\begin{thm}[Differential cohomology transgression]
Let $G$ be a compact Lie group.
Let $\pi_{EG}:EG \to BG$ be a universal principal $G$-bundle with fixed universal connection $\Theta$.
Then the transgression $\widehat T_{\pi_{EG}}$ from Definition~\ref{def:transgression} has a canonical lift $\widehat T$ on even degree differential cohomology:
\begin{equation}
\xymatrix{
&& \widehat H^{2k-1}(G;\Z)\ar[d] \\
\widehat H^{2k-1} \ar@{.>}[urr]^{\widehat T} \ar[rr]_{\widehat T_{\pi_{EG}}} && \frac{\widehat H^{2k-1}(G;\Z)}{\iota(\Omega^{2k-2}(G))}}
\end{equation}
With respect to the splitting \eqref{eq:split_BG}, the homomorphism $\widehat T$ is given as
\begin{align}
\widehat T: \widehat{CW}_\Theta(K^{2k}(G;\Z)) \oplus \iota(\Omega^{2k-1}(BG)) &\to \widehat H^{2k-1}(G;\Z) \notag \\
\widehat{CW}_\Theta(\lambda,u) \oplus \iota(\varrho) &\mapsto -(\ti_{\pi_x})^{-1}(i_{EG_x}^* \widehat{CCS}_\Theta(\lambda,u)). \label{def:T_hat}
\end{align}
\end{thm}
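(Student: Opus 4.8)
\noindent
The plan is to define $\widehat T$ separately on the two canonical summands of the splitting \eqref{eq:split_BG}: on $\widehat{CW}_\Theta(K^{2k}(G;\Z))$ by the formula \eqref{def:T_hat}, and on the topologically trivial part $\iota(\Omega^{2k-1}(BG))$ by zero. The entire content of the proof is then to check (i) that the right-hand side of \eqref{def:T_hat} is a well-defined element of $\widehat H^{2k-1}(G;\Z)$ rather than merely of the quotient, (ii) that the resulting map is a homomorphism, and (iii) that its composition with the projection $\widehat H^{2k-1}(G;\Z) \to \widehat H^{2k-1}(G;\Z)/\iota(\Omega^{2k-2}(G))$ recovers $\widehat T_{\pi_{EG}}$. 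Throughout I work on the universal bundle with its fixed connection $\Theta$, and I write $\pi_x\colon EG_x \cong G \to \{x\}$ for the restriction of $\pi_{EG}$ to the fiber; recall that $k \ge 2$, so that the splitting \eqref{eq:split_BG} applies.

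First I would make sense of $(\ti_{\pi_x})^{-1}(i_{EG_x}^*\widehat{CCS}_\Theta(\lambda,u))$ using the mapping cone long exact sequence \eqref{eq:long_ex_sequ} of $\pi_x$ together with the vanishing $\widehat H^{2k}(\{x\};\Z) = \widehat H^{2k-1}(\{x\};\Z) = 0$, valid since $2k-1 \ge 2$. For existence of a preimage: the commuting right-hand square of \eqref{eq:diagram_T_hat_f} (naturality of $\vds$ along the inclusion of the fiber) together with \eqref{eq:CCS_comm_2} gives $\vds_{\pi_x}(i_{EG_x}^*\widehat{CCS}_\Theta(\lambda,u)) = i_x^*\vds_{\pi_{EG}}(\widehat{CCS}_\Theta(\lambda,u)) = i_x^*\widehat{CW}_\Theta(\lambda,u)$, which lies in $\widehat H^{2k}(\{x\};\Z) = 0$; hence $i_{EG_x}^*\widehat{CCS}_\Theta(\lambda,u) \in \ker(\vds_{\pi_x}) = \im(\ti_{\pi_x})$. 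For uniqueness of the preimage: by exactness, $\ker(\ti_{\pi_x})$ is the image of $\pi_x^*\colon \widehat H^{2k-1}(\{x\};\Z) \to \widehat H^{2k-1}(G;\Z)$, which is $0$, so $\ti_{\pi_x}$ is injective. Finally, the assignment descends to a function of the character $\widehat{CW}_\Theta(\lambda,u)$ itself: since $H^{2k-1}(BG;\R)=0$ the map $(\curv,c)$ is injective on $\widehat H^{2k}(BG;\Z)$, and $CW_\Theta$ is injective by Cartan's theorem \cite{C50}, so $(\lambda,u) \mapsto \widehat{CW}_\Theta(\lambda,u)$ is injective and the pair $(\lambda,u)$ is recovered from the character.

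Additivity of $\widehat T$ is then immediate: $\widehat{CCS}_\Theta$ is additive in $(\lambda,u)$ (being $(\curv,\cov,c)^{-1}$ applied to the additive datum $CCS_\Theta(\lambda,u)$), while $i_{EG_x}^*$ and $(\ti_{\pi_x})^{-1}$ are homomorphisms and $\widehat T$ vanishes on the second summand; together with the additivity of $\widehat{CW}_\Theta$ this makes $\widehat T$ a homomorphism on $\widehat H^{2k}(BG;\Z)$. To verify the lifting property I compare with Definition~\ref{def:transgression}. On the Cheeger-Simons summand, \eqref{eq:CCS_comm_2} says that $\widehat{CCS}_\Theta(\lambda,u)$ is a relative lift of $\widehat{CW}_\Theta(\lambda,u)$ along $\vds_{\pi_{EG}}$; substituting this choice of $h'$ into \eqref{eq:def_T_hat} shows that the class of $\widehat T(\widehat{CW}_\Theta(\lambda,u))$ in the quotient is exactly $\widehat T_{\pi_{EG}}(\widehat{CW}_\Theta(\lambda,u))$. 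On the topologically trivial summand, Remark~\ref{rem:kernel_T_hat} gives $\widehat T_{\pi_{EG}}(\iota(\varrho))=0$, matching $\widehat T(\iota(\varrho))=0$. It remains only to note that the two descriptions of the target quotient agree: since $EG$ is contractible, $\widehat H^{2k-1}(EG;\Z) = \iota(\Omega^{2k-2}(EG))$, and restriction of forms to the fiber is surjective, so $i_{EG_x}^*\widehat H^{2k-1}(EG;\Z) = \iota(\Omega^{2k-2}(G))$.

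The one genuinely conceptual step---everything else being exactness and dimension counting---is the observation that the quotient in $\widehat T_{\pi_{EG}}$ arises solely from the freedom in choosing a relative lift $h'$ of $h$, and that on the Cheeger-Simons summand the Cheeger-Chern-Simons character furnishes a \emph{canonical} such lift. This is precisely what rigidifies the transgression into a genuine $\widehat H^{2k-1}(G;\Z)$-valued homomorphism. Canonicity of $\widehat T$ then follows from canonicity of the splitting \eqref{eq:split_BG} and of $\widehat{CCS}_\Theta$, both of which depend only on the fixed universal connection $\Theta$. I expect the main care to be needed in pinning down the exact mapping cone sequence \eqref{eq:long_ex_sequ} for $\pi_x$ and the orientation conventions implicit in $\ti_{\pi_x}$, but no new ideas beyond those above are required.
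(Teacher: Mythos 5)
Your proposal is correct and follows essentially the same route as the paper's proof: the transgression kills the topologically trivial summand of \eqref{eq:split_BG}, and on the Cheeger--Simons summand the Cheeger--Chern--Simons character is the canonical relative lift $h'$ to substitute into \eqref{eq:def_T_hat}. You simply spell out several verifications the paper leaves implicit (invertibility of $\ti_{\pi_x}$, well-definedness of the formula in terms of the character rather than the pair $(\lambda,u)$, and the identification $i_{EG_x}^*\widehat H^{2k-1}(EG;\Z)=\iota(\Omega^{2k-2}(G))$), all of which are sound.
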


\begin{proof}
By Remark~\ref{rem:kernel_T_hat}, the transgression $\widehat T_{\pi_{EG}}$ vanishes on the second factor of $\widehat H^{2k}(BG;\Z)$ in \eqref{eq:split_BG}.
By Definition~\ref{def:transgression}, to evaluate the transgression $\widehat T_{\pi_{EG}}$ on a Cheeger-Simons character $h=\widehat{CW}_\Theta(\lambda,u) \in \widehat H^{2k}(BG;\Z)$, we need to find a relative charactcer $h'$ under the map $\vds_{\pi_{EG}} \in\widehat H^{2k}(\pi_{EG};\Z)$ satisfying $\vds_{\pi_{EG}}(h')=h$. 
By Theorem~\ref{thm:CCS}, the correspponding Cheeger-Chern-Simons character $h'=\widehat{CCS}_\Theta(\lambda,u)$ is the canonical choice of such a relative character.
Inserting $h'=\widehat{CCS}_\Theta(\lambda,u)$ into \eqref{eq:def_T_hat}, we observe that the map $\widehat T$ indeed lifts the transgression $\widehat T_{\pi_{EG}}$.  
\end{proof}

\begin{rem}
Since the Chern-Simons and Cheeger-Chern-Simons constructions are not multiplicative, the map $\widehat T$ does not vanish on products.
For characters $h_1$, $h_2 \in \widehat H^\mathrm{even}(BG;\Z)$, one of which is topologically trivial, we have $\widehat T(h_1*h_2) =0$, since also $h_1 * h_2$ is topologically trivial.
To the contrary, for $(\lambda_i,u_i) \in K^*(G;\Z)$, $i=1,2$, we have 
\begin{equation}\label{eq:T_hat_mult}
\widehat T\big(\widehat{CW}_\Theta(\lambda_1,u_1) * \widehat{CW}_\Theta(\lambda_2,u_2)\big)
=
-\iota(i_{EG_x}^*\varrho). 
\end{equation}
Here $\varrho \in \Omega^{2k-2}(EG)$ is a form which corrects the non-multiplicativity of the Chern-Simons construction:
By \eqref{eq:CCS_mult}, we have 
$$
\widehat{CCS}_\Theta(\lambda_1 \cdot \lambda_2, u_1 \cup u_2) = \widehat{CCS}_\Theta(\lambda_1,u_1) * \widehat{CW}_\Theta(\lambda,u) + \iota_{\pi_{EG}}(0,\varrho).
$$
By pull-back along $(i_x,i_{EG_x}):(\{x\},EG_x) \to (BG,EG)$ and naturality of the product, we obtain 
\begin{align*}
(i_x,i_{EG_x})^*&\widehat{CCS}_\Theta(\lambda_1 \cdot \lambda_2, u_1 \cup u_2) \\
&=
(i_x,i_{EG_x})^*\widehat{CCS}_\Theta(\lambda_1,u_1) * \underbrace{i_x^*\widehat{CW}_\Theta(\lambda,u)}_{=0} + \iota_{\pi_{EG}}(0,{EG_x}^*\varrho) \\
&=
\iota_{\pi_{EG}}(0,i_{EG_x}^*\varrho).
\end{align*}
Inserting into Definition~\ref{def:T_hat}, we obtain \eqref{eq:T_hat_mult}.
\end{rem}

\begin{rem}[Compatibility]\label{rem:T_hat}
The transgression $\widehat T: \widehat H^\mathrm{even}(BG;\Z) \to \widehat H^\mathrm{odd}(G;\Z)$ is compatible with the ususal cohomology transgression $T$ and the characteristic class.
Likewise, it is compatible with the Chern-Simons construction and the curvature.
Thus we have
\begin{align}
c \circ \widehat T  
&= 
T \circ c \label{eq:cT} \\
\curv(\widehat T(\widehat{CW}_\Theta(\lambda,u)))
&=
i_{EG_x}^*CS_\Theta(\lambda). \label{eq:curvT}
\end{align}  
Compatibility with characteristic class follows from the fact that the map $\widehat T$ is constructed through diagram chase arguments analogous to those in \eqref{eq:def_T_f_cone} with singular cohomology replaced by differential cohomology.
Compatibility with curvature follows from the construction of the Cheeger-Chern-Simons characters.
By \eqref{eq:CCS_comm_1}, we have:
\begin{align*}
\curv(\widehat T(\widehat{CW}_\Theta(\lambda,u)))
&=
\curv\big(-(\ti_{\pi_x})^{-1}(i_{EG_x}^* \widehat{CCS}_\Theta(\lambda,u))\big) \\
&=
\cov(i_{EG_x}^* \widehat{CCS}_\Theta(\lambda,u)) \\
&=
i_{EG_x}^*CS_\Theta(\lambda).
\end{align*}
If $b_{2k-2}(G)=0$, then $\widehat T(\widehat{CW}_\Theta(\lambda,u))$ is the unique character in $\widehat H^{2k-1}(G;\Z)$ satisfying \eqref{eq:cT} and \eqref{eq:curvT}.
This follows immediately from the exact sequence \eqref{eq:sequ_R}. 
\end{rem}

\subsection{A differential Hopf theorem}\label{sec:Hopf}
Let $G$ be a compact connected Lie group.
The classical Hopf theorem states that the real cohomology of $G$ is an exterior algebra on odd degree generators $p^1,\ldots,p^N$.
By the Borel transgression theorem, the real cohomology of the classifying space $BG$ is a polynomial algebra on even degree generators $q^1,\ldots,q^N$.
Moreover, the generators of $H^*(BG;\R)$ and $H^*(G;\R)$ are related by transgression, i.e.~$T(q^i) = p^i$ for $i=1,\ldots,N$.
In this section, we use the splitting \eqref{eq:split_BG} to transfer these results to differential cohomology.

Let $k \in \N$.
By the Cartan theorem, the universal Chern-Weil map descends to an isomorphism $CW: I^k_0(G) \to H^{2k}_{dR}(BG;\Z) \cong H^{2k}(BG;\Z)_\R$.
For $q^i$ as above, choose invariant polynomials $\lambda^i \in I^{k_i}_0(G)$ such that $CW(\lambda^i) = q^i$.
Moreover, choose $u^i \in H^{2k_i}(BG;\Z)$ such that $u^i_\R = q^i$ for $i=1,\ldots,N$.
Then $(\lambda^i,u^i) \in K^{2k_i}(G;\Z)$, and we have the corresponding Cheeger-Simons characters $\widehat{CW}_\Theta(\lambda^i,u^i) \in \widehat H^{2k_i}(BG;\Z)$.
Denote the curvatures of the transgressed Cheeger-Simons characters by
$$
\omega^i
:=\curv(\widehat T(\widehat{CW}_\Theta(\lambda^i,u^i))) = i_{EG_x}^*CS_\Theta(\lambda^i) \in \Omega^{2k_i-1}_0(G)
$$
Since $T(q^i) = T(u^i_\R) = T([CW_\Theta(\lambda^i)]_{dR}) = [\omega^i]_{dR} \in H^{2k_i-1}(G;\R)$, the canonical map $\Omega^*_\mathrm{cl}(G) \to H^*(G;\R)$ descends to an isomorphism $\Lambda(\omega^1,\ldots,\omega^N) \xrightarrow{\approx} H^*(G;\R)$.
Denote by $\Lambda_\Z(\omega^1,\ldots,\omega^N) \subset \Omega^*_0(G)$ the integral lattice generated by the forms $\omega^1,\ldots,\omega^N$.
If the cohomology of $G$ has no torsion, then we also obtain an isomorphism $\Lambda_\Z(\omega^1,\ldots,\omega^N) \xrightarrow{\approx} H^*(G;\Z)$.

Now the subring of $\widehat H^*(G;\Z)$ generated by the transgressed Cheeger-Simons characters $\widehat T(\widehat{CW}_\Theta(\lambda^i,u^i))$ is a direct summand of as a graded group.
This may be regarded as a differential cohomology version of the Hopf theo\-rem:
\begin{thm}[Differential Hopf theorem]\label{thm:Hopf}
Let $G$ be a compact, connected Lie group such that $H^*(G;\Z)$ has no torsion.
Choose $(\lambda^i,u^i) \in K^*(G;\Z)$, $i=1,\ldots,N$, such that 
\begin{align*}
H^*(BG;\R)
&=  
\R[{u^1}_\R,\ldots,{u^N}_\R] \\
H^*(G;\R)
&= 
\Lambda(T({u^1}_\R),\ldots,T({u^N}_\R)) = \Lambda(\omega^1,\ldots,\omega^N) \\
H^*(G;\Z)
&=
\Lambda_\Z(\omega^1,\ldots,\omega^N),
\end{align*}
where $\omega^i:=\curv(\widehat T(\widehat{CW}_\Theta(\lambda^i,u^i))) = i_{EG_x}^*CS_\Theta(\lambda_i) \in \Omega^{2k_i-1}_0(G)$ denote the curvature forms of the transgressed Cheeger-Simons characters.
Let $F \subset \Omega^*(G)$ be a topological complement to the subspace $\Omega^*_\mathrm{cl}(G)$ of closed forms. 
Then we have the following topological direct sum decomposition of graded groups:
$$
\widehat H^*(G;\Z)
= 
\big\langle \widehat T (\widehat{CW}_\Theta (\lambda^1,u^1)), \ldots,  \widehat T (\widehat{CW}_\Theta (\lambda^N,u^N)) \big\rangle_\Z \oplus \iota\big( \Lambda(\omega^1,\ldots\omega^N) \big) \oplus \iota(F).
$$
\end{thm}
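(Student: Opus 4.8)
The plan is to follow the pattern of Theorem~\ref{prop:split_even} and Proposition~\ref{prop:split_odd}: realize the decomposition by lifting, through the map $(\curv,c)$, a splitting of $R^*(G;\Z)$, and separately splitting off the kernel of $(\curv,c)$. Write $t_i := \widehat T(\widehat{CW}_\Theta(\lambda^i,u^i)) \in \widehat H^{2k_i-1}(G;\Z)$; by Remark~\ref{rem:T_hat} these satisfy $\curv(t_i)=\omega^i$ and $c(t_i)=T(u^i)$. The exact sequence~\eqref{eq:sequ_R} gives
\[ 0 \to \tfrac{H^{*-1}(G;\R)}{H^{*-1}(G;\Z)_\R} \to \widehat H^*(G;\Z) \xrightarrow{(\curv,c)} R^*(G;\Z) \to 0, \]
so it suffices to identify the left-hand kernel with $\iota(\Lambda(\omega^1,\ldots,\omega^N))$ and to produce a section of $(\curv,c)$ whose image is the internal sum of $\iota(F)$ and the subgroup generated by the $t_i$.

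First I would identify the kernel. Since $H^*(G;\R)=\Lambda(\omega^1,\ldots,\omega^N)$ by the Hopf-theorem hypothesis, the closed forms lying in $\Lambda(\omega^1,\ldots,\omega^N)$ represent every real cohomology class. Hence, using the sequences~\eqref{eq:short_ex_sequ} and the description of $\iota$ on closed forms as the composite $H^{*-1}(G;\R)\to H^{*-1}(G;\Ul)$, the restriction of $\iota$ to $\Lambda(\omega^1,\ldots,\omega^N)$ surjects onto the flat torus $\tfrac{H^{*-1}(G;\R)}{H^{*-1}(G;\Z)_\R}$ with kernel the integral lattice $\Lambda_\Z(\omega^1,\ldots,\omega^N)$. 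Thus $\iota(\Lambda(\omega^1,\ldots,\omega^N)) = \ker(\curv,c)$, which is the second summand in the claim.

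Next I would split the target. From $H^*(G;\R)=\Lambda(\omega)$ together with torsion-freeness of $H^*(G;\Z)=\Lambda_\Z(\omega)$ one obtains $\Omega^*_\mathrm{cl}(G)=\Lambda(\omega)\oplus d\Omega^{*-1}(G)$, hence $\Omega^*_0(G)=\Lambda_\Z(\omega)\oplus d\Omega^{*-1}(G)$, and therefore $R^*(G;\Z)=\Lambda_\Z(\omega)\oplus\big(d\Omega^{*-1}(G)\times\{0\}\big)$, exactly as in~\eqref{eq:split_R}. The exact summand is lifted by $\iota(F)$: since $F$ complements the closed forms, $\iota|_F$ is injective, $c\circ\iota=0$, and $\curv\circ\iota|_F\colon F\to d\Omega^{*-1}(G)$ is an isomorphism. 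The lattice summand $\Lambda_\Z(\omega)$ is lifted by products of the $t_i$: because $c$ and $\curv$ are multiplicative, $c(t_{i_1}*\cdots*t_{i_r})=T(u^{i_1})\cup\cdots\cup T(u^{i_r})$ and $\curv(t_{i_1}*\cdots*t_{i_r})=\omega^{i_1}\wedge\cdots\wedge\omega^{i_r}$, the latter having no exact part.

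The delicate step, and the main obstacle, is to control the ring generated by the $t_i$. Since each $t_i$ has odd degree, graded commutativity forces $2\,(t_i*t_i)=0$, while $\curv(t_i*t_i)=\omega^i\wedge\omega^i=0$ and $c(t_i*t_i)=T(u^i)^2=0$; hence $t_i*t_i$ is a possibly nonzero $2$-torsion \emph{flat} character, lying in $\iota(\Lambda(\omega))=\ker(\curv,c)$. Consequently the full subring meets the kernel nontrivially in general, and the summand $\langle t_1,\ldots,t_N\rangle_\Z$ must be read as the free graded group spanned by the square-free products $t_{i_1}*\cdots*t_{i_r}$ with $i_1<\cdots<i_r$. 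For these, $c$ sends the monomials to the standard basis $\{T(u^{i_1})\cup\cdots\cup T(u^{i_r})\}_{i_1<\cdots<i_r}$ of the exterior algebra $\Lambda_\Z(\omega)=H^*(G;\Z)$, which is $\Z$-linearly independent precisely because $H^*(G;\Z)$ is torsion-free; thus $(\curv,c)$ maps $A:=\langle t_1,\ldots,t_N\rangle_\Z$ isomorphically onto the lattice summand of $R^*(G;\Z)$. Since $(\curv,c)$ then carries $A\oplus\iota(F)$ isomorphically onto $R^*(G;\Z)$, combining with the kernel identification through~\eqref{eq:sequ_R} yields $\widehat H^*(G;\Z)=A\oplus\iota(\Lambda(\omega))\oplus\iota(F)$, as claimed.
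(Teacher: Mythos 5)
Your proof is correct and takes essentially the same approach as the paper's: both identify the flat subgroup $\ker(\curv,c)$ with $\iota\big(\Lambda(\omega^1,\ldots,\omega^N)\big)$ and lift the splitting $\Omega^*_0(G)=\Lambda_\Z(\omega^1,\ldots,\omega^N)\oplus d(\Omega^*(G))$ through the curvature map, using square-free products of the characters $\widehat T(\widehat{CW}_\Theta(\lambda^i,u^i))$ together with $\iota(F)$ (whether one packages this via $(\curv,c)$ and the sequence \eqref{eq:sequ_R} or via $\curv$ and the splitting of the curvature sequence is immaterial, since torsion-freeness makes every flat character topologically trivial). Your remark that the square of an odd-degree generator can be a nonzero $2$-torsion flat character, so that $\big\langle\cdots\big\rangle_\Z$ must be read as the $\Z$-span of the square-free monomials rather than as the full subring, is a worthwhile sharpening of a point the paper's proof uses only implicitly.
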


In other words, up topologically trivial characters the differential cohomology of $G$ is generated by odd degree characters which are transgressions of Cheeger-Simons characters on $BG$.

\begin{proof}
By the assumption that the cohomology of $G$ has no torsion, we have $H^*(G;\Z) = H^*(G;\Z)_\R = H^*_{dR}(G;\Z)$.
Since $G$ is assumed to be compact, we may use the splitting arguments from \cite[Ch.~A]{BSS14}.
Any choice of $n$-forms which represent a basis of $H^n(G;\R)$ provides us with a splitting $\widehat H^n(G;\Z) \approx j(H^{n-1}(G;\Ul)) \oplus \Omega^n_0(G)$.

Since $H^*(G;\Z)$ has no torsion, any flat character is topologically trivial.
In other words, the image of the inclusion $j:H^*(G;\Ul) \to \widehat H^*(G;\Z)$ coincides with the image of the topological trivialization $\iota:\Omega^*(G) \to \widehat H^*(G;\Z)$, restricted to the space of closed forms.
Thus for any $n \in \Z$ we obtain a direct sum decomposition $\widehat H^n(G;\Z) \approx \iota(\Omega^{n-1}_\mathrm{cl}(G)) \oplus \Omega^n_0(G)$.
The inclusion of the first factor corresponds to topological trivialization, the projection to the second factor corresponds to the curvature map.

Using the Hopf theorem, we may choose these splittings more canonically:
Let $(\lambda^i,u^i) \in K^{2k_i}(G;\Z)$ and $\omega^i$ for $i=1,\ldots,N$ as above.
By the isomorphisms $\Lambda_\Z(\omega^1,\ldots,\omega^N) \xrightarrow{\approx} H^*(G;\Z)$ and $\Lambda(\omega^1,\ldots,\omega^N) \xrightarrow{\approx} H^*(G;\R)$ we obtain canonical topological splittings
\begin{align}
\Omega^*_\mathrm{cl}(G) 
&=
\Lambda(\omega^1,\ldots,\omega^N) \oplus d(\Omega^*(G)) \notag \\
\Omega^*_0(G) 
&=
\Lambda_\Z(\omega^1,\ldots,\omega^N) \oplus d(\Omega^*(G)) \label{eq:split_0} \\
\intertext{%
Choose a topological complement $F \subset \Omega^*(G)$ to the subspace $\Omega^*_\mathrm{cl}(G)$ of closed forms, e.g.~by introducing an auxiliary Riemannian metric and using the Hodge decomposition.
Then exterior differential retsricts to a topological isomorphism \mbox{$d:F \to d(\Omega^*(G))$} and we obtain the splittings
}%
\Omega^*(G) 
&=
\Lambda(\omega^1,\ldots,\omega^N) \oplus d(\Omega^*(G)) \oplus F \notag \\
\frac{\Omega^*(G)}{\Omega^*_0(G)}
&=
\frac{\Lambda(\omega^1,\ldots,\omega^N)}{\Lambda_\Z(\omega^1,\ldots,\omega^N)} \oplus F 
=
\frac{H^*(G;\R)}{H^*(G;\Z)} \oplus F. \notag
\end{align}
Now let $h \in \widehat H^*(G;\Z)$ be an arbitrary character.
Then its curvature form $\curv(h) \in \Omega^*_0(G) = \Lambda(\omega^1,\ldots,\omega^N) \oplus d(\Omega^*(G))$ has a unique decomposition 
$$
\curv(h) 
= 
\sum a_{i_1,\ldots,i_k} \cdot \omega^{i_1} \wedge \ldots \wedge \omega^{i_k} + d\mu
$$
with coefficients $a_{i_1,\ldots,i_k} \in \Z$ and $\mu \in F$.
Now put
$$ 
h'
:= 
\sum a_{i_1,\ldots,i_k} \cdot \widehat T(\widehat{CW}_\Theta(\lambda^{i_1},u^{i_1})) * \ldots * \widehat T(\widehat{CW}_\Theta(\lambda^{i_k},u^{i_k})) + \iota(\mu).
$$
Then we have $\curv(h) = \curv(h')$.
Thus there is a uniquely determined differential form $\varrho \in \Lambda(\omega^1,\ldots,\omega^N)$ such that $h = h' + \iota(\varrho)$.
Summarizing, we have obtain a unique decomposition of the character $h$ as 
$$
h 
=
\sum a_{i_1 \ldots i_k} \cdot \widehat T(\widehat{CW}_\Theta(\lambda^{i_1},u^{i_1})) * \ldots * \widehat T(\widehat{CW}_\Theta(\lambda^{i_N},u^{i_N})) + \iota(\varrho) + \iota(\mu) 
$$
This proves the theorem.
\end{proof}

\begin{rem}
If the integral cohomology has $G$ has torsion, then the differential cohomology of $G$ also contains flat characters with torsion characteristic class.
These could also be contained in the subring generated by $\widehat T(\widehat{CW}_\Theta(\lambda_i,u_i))$.
In this case one cannot expect that the differential cohomology has a direct sum decomposition, since the subgroup of torsion class characters generated by Cheeger-Simons characters need not be a direct summand of the subgroup of all torsion class characters.  
\end{rem}

\subsection{Transgression via the caloron correspondence}\label{sec:caloron_transg}
Let $G$ be a compact connected Lie group and $\LL G$ and $\Omega G$ the free and the based loop group. 
Calorons were introduced \cite{HS78, N84} as certain periodic $G$-instantons on a manifold of the form $X \times S^1$.  
Later it was observed \cite{GM88} that calorons are in 1-1 correspondence with $\LL G$-intantons on $X$.    
In mathemcatical terms, the so-called caloron correspondence \cite{MV10, HMV13} is a 1-1 correspondence between principal $\LL G$- or $\Omega G$-bundles over a manifold $X$ and principal $G$-bundles over $X \times S^1$.
Both sides of the correspondence may be equipped with connections: in this case, the caloron correspondence is a 1-1 correspondence between $G$-bundles over $X \times S^1$ with connection and $\LL G$- or $\Omega G$-bundles over $X$ with connection and a so-called \emph{Higgs field}, see \cite{MV10, HMV13} for details.

A particular $\LL G$-bundle with connection arises from the path fibration $\PP G \to G$.
This bundle carries a canonical Higgs field.
The caloron correspondence transfers the path fibration into a canonical principal $G$-bundle with connection $(\tilde P,\tilde \theta) \to G \times S^1$, see \cite{MV10}.
Since the base of this $G$-bundle is a fiber bundle with compact oriented fibers, we may apply fiber integration for differential characters as constructed in \cite{BB13}.
This yields another notion of differential cohomology transgression:

\begin{defn}\label{def:caloron_transg}
Let $G$ be a compact connected Lie group.
Let $\pi_{EG}:EG \to BG$ be a universal principal $G$-bundle with fixed universal connection $\Theta$.
Let $(\tilde P,\tilde \theta) \to G \times S^1$ be the principal $G$-bundle arising from the path fibration via the caloron transform.
Then we define the caloron transgression to be the homomorphism
\begin{align*}
\widehat T_\mathrm{cal}: \widehat H^{2k}(BG;\Z) = \widehat{CW}_\Theta(K^{2k}(G;\Z)) &\to \widehat H^*(G;\Z) \\ 
\widehat{CW}_\Theta(\lambda,u) \oplus \iota(\varrho) &\mapsto \fint_{S^1} \widehat{CW}_{\tilde\theta}(\lambda,u).
\end{align*}
\end{defn}

\begin{rem}\label{rem:T_cal}
Note that we could extend the above homomorphism to a map defined on all of $\widehat H^*(BG;\Z)$ by choosing a classifying map for the bundle with connection $f:G \times S^1 \to BG$ and setting
$$
\widehat H^*(BG;\Z) \to \widehat H^*(G;\Z), \qquad h \mapsto \fint_{S^1} f^*h. 
$$
But this map in general depends upon the choice of classifying map.
In contrast, the Cheeger-Simons characters $\widehat{CW}_{\tilde\theta}(\lambda,u) = f^*\widehat{CW}_\Theta(\lambda,u)$ are natural with respect to connection preserving bundle maps and thus do not depend upon the choice of classifying map.  
\end{rem}

In general, we do not know whether the caloron transgression $\widehat T_\mathrm{cal}$ coincides with the differential cohomology transgression $\widehat T$.
We only know that they have the same curvature and characteristic class, thus they coincide up to topologically trivial flat characters:

\begin{lem}\label{lem:TT}
Let $G$ be a compact connected Lie group.
Let $\widehat T$ be the universal differential cohomology transgression as defined in \ref{def:transgression}.
Let $\widehat T_\mathrm{cal}$ be the caloron transgression as defined in \ref{def:caloron_transg}.
Then we have
\begin{align*}
\curv \circ \widehat T \equiv \curv \circ \widehat T_\mathrm{cal} \\
c \circ \widehat T \equiv c \circ \widehat T_\mathrm{cal}.
\end{align*}
\end{lem}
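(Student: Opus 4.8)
The plan is to reduce the statement to two concrete identities -- equality of the characteristic classes and equality of the curvature forms of the two transgressed characters -- and to establish each using the compatibility of fiber integration of differential characters with its form- and cohomology-level counterparts, together with the topological content of the caloron correspondence. The guiding observation is that, by the exact sequence \eqref{eq:sequ_R}, a character is pinned down by its curvature and its characteristic class up to a topologically trivial flat character; hence proving $\curv\circ\widehat T=\curv\circ\widehat T_{\mathrm{cal}}$ and $c\circ\widehat T=c\circ\widehat T_{\mathrm{cal}}$ is exactly what is meant by ``coincide up to topologically trivial flat characters.'' On the side of $\widehat T$, Remark~\ref{rem:T_hat} already supplies the relevant data: by \eqref{eq:cT} and \eqref{eq:curvT} we have $c(\widehat T(\widehat{CW}_\Theta(\lambda,u)))=T(u)$ and $\curv(\widehat T(\widehat{CW}_\Theta(\lambda,u)))=i_{EG_x}^*CS_\Theta(\lambda)$.

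Next I would compute the same two invariants on the caloron side. Since $\widehat T_{\mathrm{cal}}(\widehat{CW}_\Theta(\lambda,u))=\fint_{S^1}\widehat{CW}_{\tilde\theta}(\lambda,u)$ by Definition~\ref{def:caloron_transg}, and since fiber integration of differential characters as constructed in \cite{BB13} intertwines $\curv$ with fiber integration of forms and $c$ with integration over the fiber in integral cohomology, I obtain
\begin{align*}
\curv\big(\widehat T_{\mathrm{cal}}(\widehat{CW}_\Theta(\lambda,u))\big) &= \fint_{S^1} CW_{\tilde\theta}(\lambda), \\
c\big(\widehat T_{\mathrm{cal}}(\widehat{CW}_\Theta(\lambda,u))\big) &= \int_{S^1} u(\tilde P).
\end{align*}
Comparing with the previous paragraph, the lemma reduces to the two purely topological/geometric identities $\int_{S^1} u(\tilde P)=T(u)$ in $H^{2k-1}(G;\Z)$ and $\fint_{S^1}CW_{\tilde\theta}(\lambda)=i_{EG_x}^*CS_\Theta(\lambda)$ in $\Omega^{2k-1}(G)$.

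I expect the main obstacle to be precisely these two caloron-side identities, that is, the statement that the caloron transform of the path fibration geometrically realizes the transgression. For the characteristic-class identity I would fix a classifying map $\tilde f:G\times S^1\to BG$ for $(\tilde P,\tilde\theta)$ and argue, using the description of the caloron correspondence in \cite{MV10} together with the relation between fiber-bundle transgression and loop-space transgression reviewed in Appendix~\ref{app:transgression}, that $\tilde f$ is up to homotopy the suspension-type map whose composition with integration over $S^1$ computes $T$; this gives $\int_{S^1}\tilde f^*u=T(u)$. For the curvature identity I would compare $\fint_{S^1}CW_{\tilde\theta}(\lambda)$ with the fiber-restricted Chern-Simons form, both of which represent $T([CW_\Theta(\lambda)]_{dR})$ and are canonically expressible through the Maurer-Cartan form of $G$ as in \cite[p.~55]{CS74}. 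The delicate point is that extracting \emph{equality of forms}, rather than merely of de Rham classes, is what makes the difference character flat; this forces one to track the canonical connection and Higgs field on the path fibration through the caloron transform. If only cohomologous curvature forms can be produced, one records instead that $\widehat T$ and $\widehat T_{\mathrm{cal}}$ agree up to a topologically trivial character, and it is exactly here that the qualifier ``flat'' must be justified by the explicit form-level computation.
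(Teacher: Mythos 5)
Your reduction is exactly the paper's: compute $(\curv,c)$ of $\widehat T(\widehat{CW}_\Theta(\lambda,u))$ from \eqref{eq:curvT} and \eqref{eq:cT}, compute $(\curv,c)$ of $\fint_{S^1}\widehat{CW}_{\tilde\theta}(\lambda,u)$ via compatibility of fiber integration with curvature and characteristic class, and match the results. The two caloron-side identities you single out as the main obstacle — $\fint_{S^1}CW_{\tilde\theta}(\lambda)=i_{EG_x}^*CS_\Theta(\lambda)$ at the level of forms and $\fint_{S^1}u(\tilde P)=T(u)$ — are precisely what the paper imports by citation from \cite[Prop.~4.11]{MV10} and \cite[Prop.~3.4]{CJMSW05}, so your worry about obtaining only cohomologous curvatures is resolved by those references rather than by a new computation.
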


\begin{proof}
Let $(\lambda,u) \in K^{2k}(G;\Z)$ and $\widehat{CW}_\Theta(\lambda,u) \in \widehat H^{2k}(BG;\Z)$ the corresponding Cheeger-Simons character.
By \eqref{eq:curvT}, we have $\curv(\widehat T(\widehat{CW}_\Theta(\lambda,u)) = \iota_x^*CS_\Theta(\lambda)$.
On the other hand, it is shown in \cite[Prop.~4.11]{MV10} that also $\curv(\widehat T_\mathrm{cal}(\widehat{CW}_\Theta(\lambda,u))) = \iota_x^*CS_\Theta(\lambda)$.
Similarly, by \eqref{eq:cT}, we have $c \circ \widehat T(\widehat{CW}_\Theta(\lambda,u)) = T(u)$.
On the other hand, it is shown in \cite[Prop.~3.4]{CJMSW05} that $c(T'(\widehat{CW}_\Theta(\lambda,u))) = \fint_{S^1} u(P) = T(u)$. 
\end{proof}

\begin{cor}
Let $G$ be a compact connected Lie group with $b_{2k-2}(G)=0$.
Then the transgressions
$$
\widehat T, \widehat T_\mathrm{cal}: \widehat H^{2k}(BG;\Z) \to \widehat H^{2k-1}(G;\Z)
$$
coincide.
\end{cor}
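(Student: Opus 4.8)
The plan is to deduce the corollary directly from Lemma~\ref{lem:TT} together with the rigidity of $\widehat H^{2k-1}(G;\Z)$ in the range where $b_{2k-2}(G)=0$. Both $\widehat T$ and $\widehat T_\mathrm{cal}$ are group homomorphisms defined on the same domain $\widehat H^{2k}(BG;\Z)$ with values in $\widehat H^{2k-1}(G;\Z)$, so I would work with their difference $\widehat T - \widehat T_\mathrm{cal}$, which is again a homomorphism into $\widehat H^{2k-1}(G;\Z)$, and show that it is identically zero.

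By Lemma~\ref{lem:TT}, the curvature and characteristic class of $\widehat T$ and $\widehat T_\mathrm{cal}$ agree, i.e.\ $\curv \circ \widehat T = \curv \circ \widehat T_\mathrm{cal}$ and $c \circ \widehat T = c \circ \widehat T_\mathrm{cal}$ as maps on $\widehat H^{2k}(BG;\Z)$. Hence for every $h \in \widehat H^{2k}(BG;\Z)$ the character $\widehat T(h) - \widehat T_\mathrm{cal}(h)$ has vanishing curvature and vanishing characteristic class, so it lies in the kernel of the homomorphism $(\curv,c)\colon \widehat H^{2k-1}(G;\Z) \to R^{2k-1}(G;\Z)$.

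To conclude, I would specialize the exact sequence \eqref{eq:sequ_R} to $X=G$ in degree $n=2k-1$:
\begin{equation*}
0 \to \frac{H^{2k-2}(G;\R)}{H^{2k-2}(G;\Z)_\R} \to \widehat H^{2k-1}(G;\Z) \xrightarrow{(\curv,c)} R^{2k-1}(G;\Z) \to 0.
\end{equation*}
The kernel of $(\curv,c)$ is therefore $\frac{H^{2k-2}(G;\R)}{H^{2k-2}(G;\Z)_\R}$, the subgroup of flat topologically trivial characters. Since $b_{2k-2}(G)=0$ forces $H^{2k-2}(G;\R)=0$, this quotient vanishes, so $(\curv,c)$ is injective and $\widehat T(h)=\widehat T_\mathrm{cal}(h)$ for all $h$.

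There is essentially no obstacle beyond Lemma~\ref{lem:TT} itself, whose content (equality of curvatures via \cite[Prop.~4.11]{MV10} and of characteristic classes via \cite[Prop.~3.4]{CJMSW05}) is already in hand. The only point demanding care is that the hypothesis $b_{2k-2}(G)=0$ is exactly what collapses the indeterminacy group $\frac{H^{2k-2}(G;\R)}{H^{2k-2}(G;\Z)_\R}$ to zero; in its absence the two transgressions would in general differ by such a flat topologically trivial character, matching the qualification ``up to topologically trivial flat characters'' noted before Lemma~\ref{lem:TT}.
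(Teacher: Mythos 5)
Your proposal is correct and follows exactly the route the paper takes: the paper's proof is the one-line observation that the claim follows from Lemma~\ref{lem:TT} together with the exact sequence \eqref{eq:sequ_R}, and your argument simply spells out that the difference $\widehat T - \widehat T_\mathrm{cal}$ lands in the kernel $\frac{H^{2k-2}(G;\R)}{H^{2k-2}(G;\Z)_\R}$ of $(\curv,c)$, which vanishes under the hypothesis $b_{2k-2}(G)=0$.
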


\begin{proof}
The claim immediately follows from Lemma~\ref{lem:TT} and the exact sequence \eqref{eq:sequ_R}.
\end{proof}

Taking the map $\widehat H^*(BG;\Z) \to \widehat H^*(G;\Z)$ as in Remark~\ref{rem:T_cal}, we obtain another variant of a Hopf theorem on differential cohomology:
As in Section~\ref{sec:Hopf}, choose $(\lambda^i,u^i) \in K^{2k_i}(G;\Z)$ such that $H^*(G;\R) = \Lambda(u^1_\R,\ldots,u^N_\R;\R)$.
Putting 
$$
\omega^i
= \curv\big(\widetilde T_\mathrm{cal}(\widehat{CW}_\Theta(\lambda^i,u^i))\big)
\stackrel{\eqref{eq:curvT}}{=} \curv\big(\widehat T(\widehat{CW}_\Theta(\lambda^i,u^i))\big),
$$
we have $H^*(G;\R) = \Lambda(\omega^1,\ldots,\omega^N)$.

\begin{prop}
Let $G$ be a compact, connected Lie group such that $H^*(G;\Z)$ has no torsion.
Let $\pi_{EG}:EG \to BG$ be a universal principal $G$-bundle with fixed universal connection $\Theta$.
Let $(\tilde P,\tilde\theta) \to G \times S^1$ be the canonical $G$-bundle arising from the path fibration via the caloron transform.
Choose $(\lambda^i,u^i) \in K^{2k_i}(G;\Z)$, $i=1,\ldots,N$, as above such that  
\begin{align*}
H^*(BG;\R)
&=  
\R[u^1_\R,\ldots,u^N_\R] \\
H^*(G;\R)
&= 
\Lambda(T(u^1_\R),\ldots,T(u^N_\R)) 
= \Lambda(\omega^1,\ldots,\omega^N).
\end{align*}
Let $f:G \times S^1 \to BG$ be a classifying map for the bundle with connection. 
Assume that $f(G \times S^1) \subset BG$ is an embedded submanifold.
Define
$$
\widetilde T_\mathrm{cal}: \widehat H^*(BG;\Z) \to \widehat H^*(G;\Z), \qquad h \mapsto \fint_{S^1}f^*h.
$$
Then we have the topological direct sum decomposition
$$
\widehat H^*(G;\Z)
= 
\big\langle \widetilde T_\mathrm{cal} (\widehat{CW}_\Theta (\lambda^1,u^1)), \ldots , \widetilde T_\mathrm{cal} (\widehat{CW}_\Theta (\lambda^N,u^N)) \big\rangle_\Z \oplus \widetilde T_\mathrm{cal}\big(\iota(\Omega^*(BG))\big).
$$
\end{prop}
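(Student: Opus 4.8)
The plan is to transcribe the proof of Theorem~\ref{thm:Hopf}, replacing $\widehat T$ by the caloron transgression $\widetilde T_\mathrm{cal}$ and using the \emph{characteristic class} in place of the curvature to produce the splitting. Abbreviate $t_i := \widetilde T_\mathrm{cal}(\widehat{CW}_\Theta(\lambda^i,u^i)) \in \widehat H^{2k_i-1}(G;\Z)$. By Remark~\ref{rem:T_cal} the map $\widetilde T_\mathrm{cal} = \fint_{S^1}\circ f^*$ agrees on Cheeger-Simons characters with the caloron transgression $\widehat T_\mathrm{cal}$ of Definition~\ref{def:caloron_transg}, since $f^*\widehat{CW}_\Theta(\lambda^i,u^i) = \widehat{CW}_{\tilde\theta}(\lambda^i,u^i)$ by naturality. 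Hence Lemma~\ref{lem:TT} together with \eqref{eq:cT} yields $c(t_i) = c(\widehat T(\widehat{CW}_\Theta(\lambda^i,u^i))) = T(u^i)$. Since the characteristic class is multiplicative for the internal product, the iterated products then satisfy $c(t_{i_1}*\cdots*t_{i_k}) = T(u^{i_1})\cup\cdots\cup T(u^{i_k})$.

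Next I would run the purely algebraic step. By the Hopf and Borel transgression theorems the classes $T(u^i_\R)=\omega^i$ generate $H^*(G;\R)$ freely as an exterior algebra, so the square-free cup products $T(u^{i_1})\cup\cdots\cup T(u^{i_k})$ reduce to an $\R$-basis of $H^*(G;\R)$; because $H^*(G;\Z)$ is torsion-free it embeds as the lattice $\Lambda_\Z(\omega^1,\dots,\omega^N)$ and these integral products form a $\Z$-basis of it (including the unit in degree $0$). Writing $\mathcal M := \langle t_1,\dots,t_N\rangle_\Z$ for the $\Z$-span of the square-free internal products $t_{i_1}*\cdots*t_{i_k}$, $i_1<\cdots<i_k$ (products with a repeated factor are flat, hence topologically trivial, and are absorbed into the second summand), the characteristic class therefore restricts to an isomorphism $c|_{\mathcal M}:\mathcal M \xrightarrow{\cong} H^*(G;\Z)$. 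As $c:\widehat H^*(G;\Z)\to H^*(G;\Z)$ is a continuous, surjective homomorphism onto the discrete group $H^*(G;\Z)$, this splits it topologically:
\begin{equation*}
\widehat H^*(G;\Z) = \mathcal M \oplus \ker c,
\end{equation*}
the projection onto $\mathcal M$ being $(c|_{\mathcal M})^{-1}\circ c$. Exactly as in Theorem~\ref{thm:Hopf}, the torsion-free hypothesis makes every flat character topologically trivial, so $\ker c = \widehat H^*_{\mathrm{triv.}}(G;\Z) = \iota(\Omega^{*-1}(G))$.

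It remains to identify this kernel with $\widetilde T_\mathrm{cal}(\iota(\Omega^*(BG)))$, which is the one genuinely new point and the main obstacle. Fiber integration of differential characters commutes with topological trivialization (\cite{BB13}), so $\widetilde T_\mathrm{cal}(\iota(\omega)) = \fint_{S^1}f^*\iota(\omega) = \iota(\fint_{S^1}f^*\omega)$ and thus $\widetilde T_\mathrm{cal}(\iota(\Omega^*(BG))) = \iota(\fint_{S^1}f^*\Omega^*(BG)) \subseteq \iota(\Omega^{*-1}(G)) = \ker c$. For the reverse inclusion I would invoke the embedding hypothesis: since $f$ maps $G\times S^1$ diffeomorphically onto a finite-dimensional embedded submanifold of $BG$, differential forms extend from $f(G\times S^1)$ to $BG$ (via a tubular neighbourhood and a cut-off), so $f^*:\Omega^*(BG)\to\Omega^*(G\times S^1)$ is surjective; combined with surjectivity of $\fint_{S^1}:\Omega^*(G\times S^1)\to\Omega^{*-1}(G)$ for the trivial circle bundle this gives $\fint_{S^1}f^*\Omega^*(BG)=\Omega^{*-1}(G)$, whence $\widetilde T_\mathrm{cal}(\iota(\Omega^*(BG))) = \iota(\Omega^{*-1}(G)) = \ker c$. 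Substituting into the displayed splitting gives the claimed decomposition.

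The hard part is precisely this last identification: the embedding assumption is indispensable, for without it $\widetilde T_\mathrm{cal}(\iota(\Omega^*(BG)))$ may be a proper subgroup of the topologically trivial characters and would then fail to complement $\mathcal M$. The only mild technical point there is justifying the extension of forms from a finite-dimensional submanifold of the infinite-dimensional classifying space $BG$, but this is a routine tubular-neighbourhood argument once the submanifold is embedded; everything else is a direct transcription of Theorem~\ref{thm:Hopf}.
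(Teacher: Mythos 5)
Your proof is correct, and it reaches the same decomposition as the paper but produces the splitting by a different map: you split $\widehat H^*(G;\Z)$ along the characteristic class exact sequence, showing $c$ restricts to an isomorphism from the span $\mathcal M$ of square-free products of the $\widetilde T_\mathrm{cal}(\widehat{CW}_\Theta(\lambda^i,u^i))$ onto $H^{>0}(G;\Z)$ (via $c(t_i)=T(u^i)$ from Lemma~\ref{lem:TT} and multiplicativity of $c$), and then identify $\ker c$ with the topologically trivial characters using torsion-freeness. The paper instead splits along the curvature sequence: it decomposes $\Omega^*_0(G)=\Lambda_\Z(\omega^1,\ldots,\omega^N)\oplus d\Omega^*(G)$, observes that $\curv$ maps $\mathcal M$ isomorphically onto the lattice $\Lambda_\Z(\omega^1,\ldots,\omega^N)$, and absorbs both the flat characters (topologically trivial, by torsion-freeness) and the exact-curvature part into $\iota(\Omega^*(G))$. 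The two verifications are dual uses of the same hypotheses — either way one needs torsion-freeness precisely so that $\ker c=\iota(\Omega^{*-1}(G))$, respectively so that flat implies topologically trivial — and your computation $c(t_{i_1}*\cdots*t_{i_k})=T(u^{i_1})\cup\cdots\cup T(u^{i_k})$ plays the role of the paper's $\curv(t_{i_1}*\cdots*t_{i_k})=\omega^{i_1}\wedge\cdots\wedge\omega^{i_k}$. Your approach has the mild advantage of not needing to choose a complement $F$ of the closed forms, since the characteristic class already kills all of $\iota(\Omega^{*-1}(G))$ at once; the paper's has the advantage of exhibiting the projection onto $\mathcal M$ concretely through the curvature form. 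The final and genuinely new step — that the embedding hypothesis makes $f^*$ surjective on forms, that $\fint_{S^1}$ is surjective for the trivial circle bundle, and hence that $\widetilde T_\mathrm{cal}(\iota(\Omega^*(BG)))=\iota(\Omega^{*-1}(G))$ — is identical to the paper's, including your correct diagnosis that this is where the embedding assumption is actually used.
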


\begin{proof}
Since $G$ is compact, we may choose a topological complement $F \subset \Omega^*(G)$ for the space $\Omega^*_\mathrm{cl}(G)$ of closed forms.
Since the cohomology of $G$ has no torsion, any flat character is topologically trivial. 
Thus $j(H^*(G;\Ul)) = \iota(\Omega^*_\mathrm{cl}(G))$.
By arguments similar to those in \cite[Ch.~A]{BSS14} and in the proof of Theorem~\ref{thm:Hopf}, we obtain the splitting of the curvature sequence:
\begin{align*}
\widehat H^*(G;\Z)
&\stackrel{\phantom{\eqref{eq:split_0}}}{\approx}
\iota(\Omega^*_\mathrm{cl}(G)) \oplus \Omega^*_0(G) \\
&\stackrel{\eqref{eq:split_0}}{=}
\iota(\Omega^*_\mathrm{cl}(G)) \oplus d(\Omega^*(G)) \oplus \Lambda_\Z(\omega^1,\ldots,\omega^N) \\
&\stackrel{\phantom{\eqref{eq:split_0}}}{=}
\iota(\Omega^*_\mathrm{cl}(G)) \oplus \iota(F) \oplus \Lambda_\Z(\omega^1,\ldots,\omega^N) \\
&\stackrel{\phantom{\eqref{eq:split_0}}}{=}
\iota(\Omega^*(G)) \oplus \Lambda_\Z(\omega^1,\ldots,\omega^N).
\end{align*}
By construction, the subspace $\Lambda_\Z(\omega^1,\ldots,\omega^N) \subset \Omega^*_0(G)$ is generated by the curvatures of the characters $\widetilde T_\mathrm{cal}(\widehat{CW}_\Theta(\lambda^i,u^i))$.
In other words, the curvature provides an isomorphism
$$
\curv:\big\langle \widetilde T_\mathrm{cal} (\widehat{CW}_\Theta (\lambda^1,u^1)), \ldots , \widetilde T_\mathrm{cal} (\widehat{CW}_\Theta (\lambda^N,u^N)) \big\rangle_\Z
\xrightarrow{\approx}
\Lambda_\Z(\omega^1,\ldots,\omega^N).
$$
The assumption that $f(G \times S^1) \subset BG$ is an embedded submanifold implies that the pull-back of differential forms $f: \Omega^*(BG) \to \Omega^*(G \times S^1)$ is surjective.
Note that also the fiber integration $\fint_{S^1}: \Omega^*(G \times S^1) \to \Omega^*(G)$ is surjective:
Let $\vartheta \in \Omega^1(S^1)$ be any $1$-form with integral $1$. 
Given a form $\mu \in \Omega^*(G)$, the up-down formula for the bundle $\mathrm{pr_1}:G \times S^1 \to G$ yields $\fint_{S^1}\mathrm{pr}_1^*\mu \wedge \mathrm{pr}_2^*\vartheta = \mu \wedge \fint_{S^1} \mathrm{pr}_2^*\vartheta = \mu$.
Thus the map $\fint_{S^1} \circ f^*: \Omega^*(BG) \to \Omega^*(G)$ is surjective.
Hence $\widetilde T_\mathrm{cal}\big(\iota(\Omega^*(BG))\big) = \iota(\Omega^*(G))$.
\end{proof}

%%%%%%%%%%%%%%%%%%%%%%%%%%%%%%%%%%%%%%%%%%%%%%%%%%%%%%%%%%%%%%%%%%%%%%%%%
\section{Differential trivializations of universal characteristic classes}\label{sec:diff_triv}
%%%%%%%%%%%%%%%%%%%%%%%%%%%%%%%%%%%%%%%%%%%%%%%%%%%%%%%%%%%%%%%%%%%%%%%%%
In this section we use Cheeger-Chern-Simons characters to establish a notion of differential refinements of trivializations of universal characteristic classes for principal $G$-bundles. 
Specializing to the class $\frac{1}{2}p_1 \in H^4(B\Spin_n;\Z)$ this yields our notion of differential String classes.

\subsection{Trivializations of universal characteristic classes}\label{subsec:triv}
Throughout this section let $G$ be a Lie group with finitely many components and $\pi:P \to X$ a principal $G$-bundle.
As above let $\pi_{EG}:EG \to BG$ be a universal principal $G$-bundle over the classifying space of $G$, i.e.~a principal $G$-bundle with contractible total space.
Let $u \in H^n(BG;\Z)$ be a universal characteristic class for principal $G$-bundles.
Equivalently, we may consider $u$ as a homotopy class of maps $BG \xrightarrow{u} K(\Z,n)$.
In the following we briefly review the notion and basic properties of trivializations of universal characteristic classes from \cite{R11}.

Let $\widetilde{BG_u}$ be the homotopy fiber of the map $BG \xrightarrow{u} K(\Z,n)$.
Let $f:X \to BG$ be a classifying map for the bundle $\pi:P \to X$, i.e.~$f^*EG \cong P$ as principal $G$-bundles over $X$.  
A trivialization of the class $u(P):=f^*u \in H^n(X;\Z)$ is by definition a homotopy class of lifts
\begin{equation*}
\xymatrix{
&& \widetilde{BG_u} \ar[d] \\
X \ar@{.>}[urr]^{\widetilde f} \ar_f[rr] && BG. 
}
\end{equation*}
The class $u(P)$ admits trivializations if and only if it is trivial, i.e.~$u(P)=0$.
By \cite[Prop.~2.3]{R11}, a trivialization of $u(P)$ gives rise to a cohomology class $q \in H^{n-1}(P;\Z)$ such that for any $x \in X$ we have:
\begin{equation}\label{eq:def_triv}
H^{n-1}(P_x;\Z) \ni i_{P_x}^*q = T(u) \in H^{n-1}(G;\Z). 
\end{equation}
Here $i_{P_x}:P_x \to P$ denotes the inclusion of the fiber $P_x:= \pi^{-1}(x) \subset P$ over $x \in X$.
A cohomology class $q \in H^{n-1}(P;\Z)$ satisfying \eqref{eq:def_triv} is called a \emph{$u$-trivialization class}.

The cohomology of the base acts on $u$-trivialization classes by $q \mapsto q + \pi^*w$, where $w \in H^{n-1}(X;\Z)$.
If $\widetilde H^j(G;\Z)= \{0\}$ for $j < n-1$, then trivializations of $u(P)$ are classified up to homotopy by $u$-trivialization classes $q \in H^{n-1}(P;\Z)$.
In this case, the transgression $T:H^n(BG;\Z) \to H^{n-1}(G;\Z)$ is an isomorphism and we have the Serre exact sequence
\begin{equation*}
\xymatrix@C=27pt{
\{0\} \ar[r] & H^{n-1}(X;\Z) \ar^{\pi^*}[r] & H^{n-1}(P;\Z) \ar^{i_{P_x}^*}[r] & \ar[r] H^{n-1}(G;\Z) \ar^{f^* \circ T^{-1}}[r] & H^n(X;\Z). 
}
\end{equation*}
In particular, the set of $u$-trivialization classes is a torsor for $H^{n-1}(X;\Z)$. 

\subsection{Differential trivializations}
We are looking for an appropriate notion of differential refinements of $u$-trivia\-lization classes.
Naively, one could define a differential $u$-trivialization to be any differential character $\widehat q \in \widehat H^{n-1}(P;\Z)$ whose characteristic class $c(\widehat q)$ is a $u$-trivialization class.
However, by the exact sequences \eqref{eq:short_ex_sequ} this would determine those differential characters only up to an infinite dimensional space of differential forms on $P$.
Instead, we expect that for an appropriate notion of differential $u$-trivializations, the space of all those is a torsor for the differential cohomology $\widehat H^{n-1}(X;\Z)$ (respectively $\pi^*\widehat H^{n-1}(X;\Z)$, in case pull-back by $\pi$ is not injective).  

Let $u \in H^n(BG;\Z)$ be a universal characteristic class in the image of the Chern-Weil map, i.e.~$n = 2k$ and there exists an invariant polynomial $\lambda \in I^k(\mathfrak g)$ such that $u_\R = [CW_\Theta(\lambda)]_{dR} \in H^n(BG;\R)$.  
By \cite{CS74}, the Chern-Simons construction is a version of the transgression homomorphism on the level of differential forms in the sense that restiction of the Chern-Simons form to any fiber represents the transgression of the associated Chern-Weil class:  
$$
H^{2k-1}_{dR}(P_x) \ni[i_{P_x}^* CS_\theta(\lambda)]_{dR} = T(u)_\R \in H^{2k-1}(G;\R),
$$
where $x \in X$ is an arbitrary point.
Thus we expect the curvature of a differential $u$-trivialization $\widehat q$ to be related to the Chern-Simons form $CS_\theta(\lambda)$.
However, this form is not closed, since $dCS_\theta(\lambda) = \pi^*CW_\theta(\lambda)$.

By assumption, we have $u(P) = f^*u=0$, and hence $[CW_\theta(\lambda)]_{dR} = u_\R =0$.
Thus there exist differential forms $\varrho \in \Omega^{2k-1}(X)$ such that $d\varrho = CW_\theta(\lambda)$.
Then the form $CS_\theta(\lambda) - \pi^*\varrho$ is closed.
Moreover, we may choose $\varrho$ such that $CS_\theta(\lambda) - \pi^*\varrho$ has integral periods.
This follows from the long exact sequence for mapping cone de Rham cohomology with integral periods:
$$
\xymatrix{
\ldots \ar[r] & H^{2k-1}_{dR}(P;\Z) \ar[r] & H^{2k}_{dR}(\pi;\Z) \ar[r] & H^{2k}_{dR}(X;\Z) \ar[r] & \ldots 
}
$$
Since $[CW_\theta(\lambda)]_{dR}=0$, the mapping cone class $[CW_\theta(\lambda),CS_\theta(\lambda)]_{dR}$ lies in the image of the homomorphism $H^{2k-1}_{dR}(P;\Z) \to H^{2k}_{dR}(\pi;\Z)$.
We thus find a pair of forms $(\varrho,\eta) \in \Omega^{2k-1}(\pi)$ such that $(CW_\theta(\lambda),CS_\theta(\lambda)) - d_\pi(\varrho,\eta)$ lies in the image of $\Omega^{2k-1}_0(P) \to \Omega^{2k}_0(\pi)$.
Thus $CW_\theta(\lambda) = d\varrho$ and $CS_\theta(\lambda) - \pi^*\varrho + d\eta$ has integral periods. 
But then also $CS_\theta(\lambda) - \pi^*\varrho$ has integral periods.

The space of forms $\varrho \in \Omega^{2k-1}(X)$ with $d\varrho = CW_\theta(\lambda)$ and $CS_\theta(\lambda) - \pi^*\varrho \in \Omega^{2k-1}_0(P)$ is a torsor for the infinite dimensional group $\Omega^{2k-1}_0(X)$.
For a fixed such form and a fixed $u$-trivialization class $q$, the set of differential characters $\widehat q \in \widehat H^{2k-1}(P;\Z)$ with curvature $\curv(\widehat q) = CS_\theta(\lambda) - \pi^*\varrho$ and characteristic class $c(\widehat q)=q$ is a torsor for the torus $\frac{H^{2k-1}(P;\R)_{\phantom{\R}}}{H^{2k-1}(P;\Z)_\R}$.

The condition that $CS_\theta(\lambda) - \pi^*\varrho$ has integral periods has a nice interpretation in terms of global sections of the Cheeger-Simons character $\widehat{CW}_\theta(\lambda,u) \in \widehat H^{2k}(X;\Z)$:
As above assume that $u(P)=0$.
Thus $\widehat{CW}_\theta(\lambda,u)$ is topologically trivial. 
By the long exact sequence \eqref{eq:long_ex_sequ} for the map $\id_X$ it has a global section.
By \eqref{eq:glob_sect} global sections are uniquely determined by their covariant derivative.
Thus for any form $\varrho \in \Omega^{2k-1}(X)$ with $d\varrho = CW_\theta(\lambda)$, we have $\vds_{\id}(\iota_{\id}(\varrho,0)) = \widehat{CW}_\theta(\lambda,u)$.

Now consider the long exact sequence \eqref{eq:long_ex_sequ} twice, once for the identity $\id_X$, once for the bundle projection $\pi:P \to X$.
Pull-back along the map $(\id_X,\pi):(X,P) \to (X,X)$ yields the commutative diagram:
\begin{equation}\label{eq:diag_diff_triv}
\xymatrix{
H^{2k-2}(X;\Ul) \ar^{\pi^*\circ j}[r] & \widehat H^{2k-1}(P;\Z) \ar^{\ti_\pi}[r] &  \widehat H^{2k}(\pi;\Z) \ar^{\vds_\pi}[r] & \widehat H^{2k}(X;\Z) \\
H^{2k-2}(X;\Ul) \ar_j[r] \ar_{\id_X^*}[u] & \widehat H^{2k-1}(X;\Z) \ar_{\ti_{\id}}[r] \ar_{\pi^*}[u]&  \widehat H^{2k}(\id_X;\Z) \ar_{\vds_{\id}}[r] \ar[u]_{(\id_X,\pi)^*} & \widehat H^{2k}(X;\Z) \ar[u]_{\id_X^*} \\  
}
\end{equation}
Pull-back along $(\id_X,\pi)$ maps the global section $\iota_{\id}(\varrho,0) \in \widehat H^{2k}(\id_X;\Z)$ of the Cheeger-Simons character $\widehat{CW}_\theta(\lambda,u)$ to the relative character $\iota_\pi(\varrho,0) \in \widehat H^{2k}(\pi;\Z)$.
Thus commutativity of the right square yields 
$$
\vds_\pi\big(\iota_\pi(\varrho,0)\big) 
= 
\vds_{\pi}\big((\id_X,\pi))^*\iota_{\id}(\varrho,0)\big) 
=
\vds_{\id}\big(\iota_{\id}(\varrho,0)\big) 
= 
\widehat{CW}_\theta(\lambda,u).
$$
On the other hand we have $\vds(\widehat{CCS}_\theta(\lambda,u)) = \widehat{CW}_\theta(\lambda,u)$.
Thus the relative characters $\vds_\pi(\widehat{CCS}_\theta(\lambda,u))$ and $\iota_\pi(\varrho,0)$ differ by a character in the image of the homomorphism $\ti:\widehat H^{2k-1}(P;\Z) \to \widehat H^{2k}(\pi;\Z)$.
This observation yields our notion of differential $u$-trivializations:

\begin{defn}\label{def:q_hat}
Let $G$ be a Lie group with finitely many components.
Let $\pi:(P,\theta) \to X$ a principal $G$-bundle with connection. 
Let $u \in H^{2k}(BG;\Z)$ be a universal characteristic class for principal $G$-bundles and $(\lambda,u) \in K^{2k}(G;\Z)$. 
A \emph{differential $u$-trivialization} is a differential character $\widehat q \in \widehat H^{2k-1}(P;\Z)$ such that
\begin{equation}\label{eq:def_q_hat}
-\ti_\pi(\widehat q)
=
\widehat{CCS}_\theta(\lambda,u) - \iota_\pi(\varrho,0) 
\end{equation}
for some $\varrho \in \Omega^{2k-1}(X)$.
\end{defn}

To establish our notion of differential $u$-trivializations, we started from a global section $\varrho \in \Omega^{2k-1}(X)$ of the Cheeger-Simons character $\widehat{CW}_\theta(\lambda,u)$.
We show that any differential $u$-trivialization uniquely determines a global section:

\begin{lem}\label{lem:q_hat_varrho}
Let $G$ be a Lie group with finitely many components and $(\lambda,u) \in K^{2k}(G;\Z)$.
Let $\pi:(P,\theta) \to X$ be a principal $G$-bundle with connection.
Let $\widehat q \in \widehat H^{2k-1}(P;\Z)$ be a differential $u$-trivialization.
Then the differential form $\varrho \in \Omega^{2k-1}(X)$ in \eqref{eq:def_q_hat} is uniquely determined by the character $\widehat q$.
Moreover, it satisfies $\vds_{\id} (\iota_{\id}(\varrho,0))=\widehat{CW}_\theta(\lambda,u)$.
In other words,  $\iota_{\id}(\varrho,0) \in \widehat H^{2k}(\id_X;\Z)$ is the unique global section of the Cheeger-Simons character $\widehat{CW}_\theta(\lambda,u)$ with covariant derivative $\varrho$.
In particular, we have $d\varrho = CW_\theta(\lambda)$ and thus $u(P)=0$.

Conversely, any such global section determines differential $u$-trivializations, uniquely up to characters of the form $j(\pi^*w) \in \widehat H^{2k-1}(P;\Z)$ for some $w \in H^{2k-2}(X;\Ul)$.
\end{lem}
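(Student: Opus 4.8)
The plan is to read off every assertion by feeding the defining relation \eqref{eq:def_q_hat} into the maps of the long exact sequence \eqref{eq:long_ex_sequ} and exploiting the commutative diagram \eqref{eq:diag_diff_triv}. I would first dispose of the uniqueness of $\varrho$. If $\widehat q$ satisfies \eqref{eq:def_q_hat} with two forms $\varrho_1,\varrho_2$, subtracting gives $\iota_\pi(\varrho_1-\varrho_2,0)=0$. Since $\iota_\pi$ is injective modulo $d_\pi$-closed relative forms with integral periods, $(\varrho_1-\varrho_2,0)$ is $d_\pi$-closed; its $P$-component $\pi^*(\varrho_1-\varrho_2)$ then vanishes, and injectivity of $\pi^*$ on forms (as $\pi$ is a surjective submersion) forces $\varrho_1=\varrho_2$.

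Next I would establish the section property together with the consequences $d\varrho=CW_\theta(\lambda)$ and $u(P)=0$. Applying $\vds_\pi$ to \eqref{eq:def_q_hat} and using $\vds_\pi\circ\ti_\pi=0$ (exactness) and $\vds_\pi(\widehat{CCS}_\theta(\lambda,u))=\widehat{CW}_\theta(\lambda,u)$ from Theorem~\ref{thm:CCS} and \eqref{eq:CCS_comm_2}, I obtain $\vds_\pi(\iota_\pi(\varrho,0))=\widehat{CW}_\theta(\lambda,u)$. The right-hand square of \eqref{eq:diag_diff_triv}, which carries $\iota_{\id}(\varrho,0)$ to $\iota_\pi(\varrho,0)$ and intertwines $\vds_{\id}$ with $\vds_\pi$, then gives $\vds_{\id}(\iota_{\id}(\varrho,0))=\widehat{CW}_\theta(\lambda,u)$; so $\iota_{\id}(\varrho,0)$ is a global section, and by \eqref{eq:glob_sect} it is the unique one with covariant derivative $\varrho$. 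Reading the curvature component yields $d\varrho=\curv(\widehat{CW}_\theta(\lambda,u))=CW_\theta(\lambda)$, and since $\widehat{CW}_\theta(\lambda,u)$ admits a global section it is topologically trivial, whence $u(P)=c(\widehat{CW}_\theta(\lambda,u))=0$ by \eqref{eq:c_CW}.

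For the converse I would start from a global section, equivalently a form $\varrho$ with $\vds_{\id}(\iota_{\id}(\varrho,0))=\widehat{CW}_\theta(\lambda,u)$, and consider $\widehat{CCS}_\theta(\lambda,u)-\iota_\pi(\varrho,0)$. Both summands map to $\widehat{CW}_\theta(\lambda,u)$ under $\vds_\pi$ (the first by Theorem~\ref{thm:CCS}, the second by the square of \eqref{eq:diag_diff_triv} and the section hypothesis), so the difference lies in $\ker\vds_\pi=\im\ti_\pi$ by exactness. Hence there is a character $\widehat q\in\widehat H^{2k-1}(P;\Z)$ with $-\ti_\pi(\widehat q)$ equal to this difference, i.e. a differential $u$-trivialization realizing the given $\varrho$. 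Any two solutions differ by an element of $\ker\ti_\pi$; invoking exactness of the upper row of \eqref{eq:diag_diff_triv} identifies this indeterminacy with $\{\,j(\pi^*w):w\in H^{2k-2}(X;\Ul)\,\}$.

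The delicate step is precisely this last identification of the indeterminacy. Exactness only places the difference of two $u$-trivializations with the same $\varrho$ in $\ker\ti_\pi$, and the genuine kernel supplied by \eqref{eq:long_ex_sequ} is a priori the full group of flat characters on $P$; the point is to cut it down to the flat characters pulled back from the base, which is exactly what renders the set of differential $u$-trivializations a torsor for $\widehat H^{2k-1}(X;\Z)$. Here I would use that \eqref{eq:def_q_hat} already pins the curvature to $\curv(\widehat q)=CS_\theta(\lambda)-\pi^*\varrho$ (so the difference is flat) and that, after restriction to a fibre via $(i_x,i_{P_x})$, the relation \eqref{eq:def_q_hat} becomes independent of $\widehat q$, forcing the flat ambiguity to be of the form $j(\pi^*w)$ as recorded in the top row of \eqref{eq:diag_diff_triv}.
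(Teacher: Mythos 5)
Your proof is correct and follows essentially the same route as the paper's: uniqueness of $\varrho$ from injectivity of $\pi^*$ on forms, the section property by applying $\vds_\pi$ to \eqref{eq:def_q_hat} and transferring through the right square of \eqref{eq:diag_diff_triv}, and existence/indeterminacy in the converse from exactness of \eqref{eq:long_ex_sequ}. The only flaw is the closing paragraph, whose worry is unfounded: the term preceding $\widehat H^{2k-1}(P;\Z)$ in \eqref{eq:long_ex_sequ} is $H^{2k-2}(X;\Ul)$ mapped by $\pi^*\circ j$, so exactness \emph{already} gives $\ker \ti_\pi = \{\,j(\pi^*w) : w \in H^{2k-2}(X;\Ul)\,\}$ on the nose --- it is not a priori the full group of flat characters on $P$, and no supplementary curvature or fibre-restriction argument is needed (that kind of argument belongs to the torsor statement of Proposition~\ref{prop:torsor}, not to this lemma).
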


\begin{proof}
Assume that \eqref{eq:def_q_hat} holds for two differential forms $\varrho,\varrho' \in \Omega^{2k-1}(X)$.
Then we have $\iota_\pi(\varrho-\varrho',0)=0$ and hence $\cov(\iota_\pi(\varrho-\varrho',0)) = \pi^*(\varrho-\varrho')=0$.
Since pull-back of differential forms along the bundle projection $\pi:P \to X$ is injective, we conclude $\varrho = \varrho'$.

Next we show that any differential form $\varrho$ satisfying \eqref{eq:def_q_hat} determines global sections:
From the commutative diagram \eqref{eq:diag_diff_triv} and condition \eqref{eq:def_q_hat} we conclude
$$
\vds_{\id}(\iota_{\id}(\varrho,0)) 
= 
\vds_\pi(\iota_\pi(\varrho,0)) 
=
\vds_\pi(\widehat{CCS}_\theta(\lambda,u)+\ti(\widehat q)) 
= 
\widehat{CW}_\theta(\lambda,u).
$$
Thus the form $\varrho$ canonically determines a global section of $\widehat{CW}_\theta(\lambda,u)$ with covariant derivative $\varrho$.
Hence $\curv(\widehat{CW}_\theta(\lambda,u)) = CW_\theta(\lambda) = d\varrho$ and $c(\widehat{CW}_\theta(\lambda,u)) = u(P)=0$.

Conversely, let $u(P)=0$ and $\varrho \in \Omega^{2k-1}(X)$ such that $d\varrho = CW_\theta(\lambda,u)$.
Then $\iota_{\id}(\varrho,0) \in \widehat H^{2k}(\id_X;\Z)$ is the unique global section of $\widehat{CW}_\theta(\lambda,u)$ with covariant derivative $\varrho$.
Hence $\iota_\pi(\varrho,0) \in \widehat H^{2k}(\pi;\Z)$ is a section along $\pi$.
Thus we have $\vds(\widehat{CCS}_\theta(\lambda,u) - \iota_\pi(\varrho,0)) =0$.
By the long exact sequence \eqref{eq:long_ex_sequ} we find a differential character $\widehat q \in \widehat H^{2k-1}(P;\Z)$ such that $-\ti(\widehat q) = \widehat{CCS}_\theta(\lambda,u) - \iota_\pi(\varrho,0)$.
By \eqref{eq:long_ex_sequ} again it is uniquely determined up to a character of the form $j(\pi^*w)$ for some $w \in H^{2k-2}(X;\Ul)$. 
\end{proof}

We show that differential $u$-trivializations are differential refinements of $u$-trivialization classes.
Conversely, any $u$-trivialization class is the characteristic class of a differential $u$-trivialization.  
In particular, the property for a principal $G$-bundle with connection $\pi:(P,\theta) \to X$ to admit differential $u$-trivializations is a purely topological condition, namely vanishing of the characteristic class $u(P) \in H^*(X;\Z)$.

\begin{prop}\label{prop:diff_triv}
Let $G$ be a Lie group with finitely many components and $(\lambda,u) \in K^{2k}(G;\Z)$.
Let $\pi:(P,\theta) \to X$ be a principal $G$-bundle with connection.
Then the following holds: 
\begin{enumerate}
\item[(i)] The bundle $\pi:(P,\theta) \to X$ admits differential $u$-trivializations iff $u(P)=0$.
\item[(ii)] If $\widehat q \in \widehat H^{2k-1}(P;\Z)$ is a differential $u$-trivialization, then $c(\widehat q) = H^{2k-1}(P;\Z)$ is a $u$-trivialization class.
\item[(iii)] For any $u$-trivialization class $q \in H^{2k-1}(P;\Z)$, there exist differential $u$-trivializations $\widehat q \in \widehat H^{2k-1}(P;\Z)$ with $c(\widehat q) = q$.
\item[(iv)] If $\widehat q \in \widehat H^{2k-1}(P;\Z)$ is a differential $u$-trivialization, then we have:
\begin{equation}\label{eq:curv_diff_triv}
\curv(\widehat q) 
=
CS_\theta(\lambda) - \pi^*\varrho.
\end{equation}
\item[(v)] For any differential $u$-trivialization $\widehat q \in \widehat H^{2k-1}(P;\Z)$ and any $x \in X$, we have:
\begin{equation}\label{eq:iota_t_hat}
\widehat H^{2k-1}(P_x;\Z) \ni i_{P_x}^*\widehat q = \widehat T(\widehat{CW}_\theta(\lambda,u)) \in \widehat H^{2k-1}(G;\Z).  
\end{equation}
\end{enumerate}
\end{prop}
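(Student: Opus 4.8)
The plan is to derive all five assertions from the defining relation \eqref{eq:def_q_hat}, treating (v) as the central computation and obtaining the rest either as consequences of it or directly from Lemma~\ref{lem:q_hat_varrho}. Assertion (i) needs no new work: the forward implication is exactly the content of Lemma~\ref{lem:q_hat_varrho}, where the form $\varrho$ attached to a differential $u$-trivialization $\widehat q$ is shown to be the covariant derivative of a global section of $\widehat{CW}_\theta(\lambda,u)$, forcing $u(P)=0$; conversely, if $u(P)=0$ then $\widehat{CW}_\theta(\lambda,u)$ is topologically trivial and the converse half of Lemma~\ref{lem:q_hat_varrho} produces a differential $u$-trivialization.

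For (v) I would restrict the defining equation \eqref{eq:def_q_hat} along the inclusion of the fibre-pair $(i_x,i_{P_x})\colon(\{x\},P_x)\to(X,P)$. Naturality of the mapping-cone maps gives $(i_x,i_{P_x})^*\ti_\pi(\widehat q)=\ti_{\pi_x}(i_{P_x}^*\widehat q)$; since $\varrho\in\Omega^{2k-1}(X)$ pulls back to $0$ on the point $\{x\}$, the term $\iota_\pi(\varrho,0)$ restricts to $0$; and naturality \eqref{eq:CCS_comm_3} together with $\widehat{CCS}_\theta(\lambda,u)=f^*\widehat{CCS}_\Theta(\lambda,u)$ identifies $i_{P_x}^*\widehat{CCS}_\theta(\lambda,u)$ with $i_{EG_x}^*\widehat{CCS}_\Theta(\lambda,u)$ under $P_x\cong G\cong EG_x$. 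The restricted equation thus reads $\ti_{\pi_x}(i_{P_x}^*\widehat q)=-\,i_{EG_x}^*\widehat{CCS}_\Theta(\lambda,u)$; inverting the isomorphism $\ti_{\pi_x}$ (the base being a point) and comparing with the definition \eqref{def:T_hat} of $\widehat T$ yields $i_{P_x}^*\widehat q=\widehat T(\widehat{CW}_\Theta(\lambda,u))$, which is \eqref{eq:iota_t_hat}. I expect this matching of the fibre restriction with the definition of $\widehat T$ — in particular keeping track of the identifications $P_x\cong EG_{f(x)}\cong G$ and of the sign — to be the main point of the proof.

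Statements (ii) and (iv) then follow quickly. Applying the characteristic class to (v) and invoking $c\circ\widehat T=T\circ c$ from \eqref{eq:cT} together with $c(\widehat{CW}_\Theta(\lambda,u))=u$ gives $i_{P_x}^*c(\widehat q)=T(u)$, so $c(\widehat q)$ satisfies \eqref{eq:def_triv} and is a $u$-trivialization class, proving (ii). For (iv) I would apply $(\curv,\cov)$ to \eqref{eq:def_q_hat}: by \eqref{eq:CCS_comm_1} the Cheeger--Chern--Simons character contributes $(CW_\theta(\lambda),CS_\theta(\lambda))$, the topologically trivial term contributes $d_\pi(\varrho,0)=(d\varrho,\pi^*\varrho)$, and the convention that $\ti_\pi$ carries curvature to $(0,-\curv)$ (read off from Remark~\ref{rem:kernel_T_hat}) accounts for the left-hand side. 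Matching covariant derivatives and using $d\varrho=CW_\theta(\lambda)$ from Lemma~\ref{lem:q_hat_varrho} gives $\curv(\widehat q)=CS_\theta(\lambda)-\pi^*\varrho$, which is \eqref{eq:curv_diff_triv}; the curvature components match consistently by the same relation.

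Finally, for (iii) I would fix a global section form $\varrho$ (available since the existence of a $u$-trivialization class forces $u(P)=0$) and the differential $u$-trivialization $\widehat q_0$ it produces via Lemma~\ref{lem:q_hat_varrho}; by (ii) its characteristic class $q_0:=c(\widehat q_0)$ is a $u$-trivialization class. Both $q$ and $q_0$ restrict to $T(u)$ on every fibre, so by the torsor structure of $u$-trivialization classes recalled in Section~\ref{subsec:triv} their difference is $q-q_0=\pi^*w$ for some $w\in H^{2k-1}(X;\Z)$. Choosing $\widehat w\in\widehat H^{2k-1}(X;\Z)$ with $c(\widehat w)=w$ by surjectivity of the characteristic class and setting $\widehat q:=\widehat q_0+\pi^*\widehat w$, exactness of \eqref{eq:long_ex_sequ} gives $\ti_\pi(\pi^*\widehat w)=0$, so $\widehat q$ again satisfies \eqref{eq:def_q_hat} with the same $\varrho$ while $c(\widehat q)=q$. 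The delicate point in this last step is the passage $q-q_0\in\pi^*H^{2k-1}(X;\Z)$, which rests on the fibration structure entering the torsor property; after (v) is established, this is the only place where nontrivial input beyond the mapping-cone formalism is required.
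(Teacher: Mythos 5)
Your overall strategy coincides with the paper's: (i) from Lemma~\ref{lem:q_hat_varrho}, (v) by restricting \eqref{eq:def_q_hat} along the fibre pair $(i_x,i_{P_x})$ and using that $\ti_{\pi_x}$ is an isomorphism over a point, (ii) by applying $c$ to (v) together with \eqref{eq:cT}, (iv) by applying $(\curv,\cov)$ to \eqref{eq:def_q_hat}, and (iii) by correcting an arbitrary differential $u$-trivialization $\widehat q'$ by $\pi^*\widehat w$, where $q-c(\widehat q')=\pi^*w$ and $c(\widehat w)=w$.

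However, the final verification in (iii) contains a step that fails: you claim that exactness of \eqref{eq:long_ex_sequ} gives $\ti_\pi(\pi^*\widehat w)=0$, so that $\widehat q=\widehat q'+\pi^*\widehat w$ satisfies \eqref{eq:def_q_hat} with the \emph{same} form $\varrho$. Exactness only tells you that $\ker(\ti_\pi)$ equals the image of $\pi^*\circ j:H^{2k-2}(X;\Ul)\to\widehat H^{2k-1}(P;\Z)$, i.e.\ $\ti_\pi$ kills pullbacks of \emph{flat} characters, not of arbitrary ones. For a general $\widehat w$ the commutative diagram \eqref{eq:diag_diff_triv} gives $\ti_\pi(\pi^*\widehat w)=(\id_X,\pi)^*\ti_{\id}(\widehat w)=\iota_\pi(-\curv(\widehat w),0)$, which is nonzero whenever $\curv(\widehat w)\neq0$. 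If your claim were true it would contradict Proposition~\ref{prop:torsor}, where adding $\pi^*h$ shifts the differential form by $-\curv(h)$, and it would make $\varrho$ independent of the correction, contradicting its uniqueness from Lemma~\ref{lem:q_hat_varrho}. The repair is exactly the computation the paper performs: $-\ti_\pi(\widehat q'+\pi^*\widehat w)=\widehat{CCS}_\theta(\lambda,u)-\iota_\pi(\varrho'-\curv(\widehat w),0)$, so $\widehat q$ is a differential $u$-trivialization with the \emph{new} form $\varrho'-\curv(\widehat w)$, and the existence statement (iii) survives unchanged. Everything else in your proposal is correct and follows the paper's proof.
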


\begin{proof}
We first prove (i):
Assume $u(P)=0$.
Then the Cheeger-Simons character $\widehat{CW}_\theta(\lambda,u)$ is topologically trivial.
Choose $\varrho \in \Omega^{2k-1}(X)$ such that $\iota_{\id}(\varrho,0) = \widehat{CW}_\theta(\lambda,u)$.
By Lemma~\ref{lem:q_hat_varrho} there exist differential $u$-trivializations with differential form $\varrho$.
The converse implication follows from (ii).

Next we prove (v):
Let $\widehat q \in \widehat H^{2k-1}(P;\Z)$ be a differential character satisfying \eqref{eq:def_triv}.
We compute the pull-back to the fiber $P_x$ over any point $x \in X$.
From the commutative diagram
\begin{equation*}
\xymatrix@C=50pt{
(\{x\},P_x) \ar^{(i_x,i_{P_x})}[r] \ar_{(\id_x,\pi_x)}[d] & (X,P) \ar^{(\id_X,\pi)}[d] \\
(\{x\},\{x\}) \ar_{(i_x,i_x)}[r] & (X,X)
}
\end{equation*}
and naturality of the long exact sequence \eqref{eq:long_ex_sequ} we obtain
\begin{align*}
\ti_{\pi_x}(i_{P_x}^*\widehat q)
&\stackrel{\phantom{\eqref{eq:def_T_hat}}}{=}
(i_x,i_{P_x})^* \ti_\pi(\widehat q) \\
&\stackrel{\eqref{eq:def_q_hat}}{=}
(i_x,i_{P_x})^*\big(-\widehat{CCS}_\theta(\lambda,u) + \iota_\pi(\varrho,0)\big) \\
&\stackrel{\phantom{\eqref{eq:def_T_hat}}}{=}
-(i_x,i_{P_x})^*\widehat{CCS}_\theta(\lambda,u) + (i_x,i_{P_x})^*(\id_X,\pi)^*\iota_{\id}(\varrho,0) \\
&\stackrel{\eqref{eq:def_T_hat}}{=}
\ti_{\pi_x}(\widehat T(\widehat{CW}_\theta(\lambda,u))) + (\id_x,\pi_x)^*\underbrace{\iota_{\id} (i_x^*\varrho,0)}_{=0}.
\end{align*}
By the long exact sequence \eqref{eq:long_ex_sequ} for the bundle projection $\pi_x$ over a point $x \in X$ the map $\ti_{\pi_x}:\widehat H^{2k-1}(P_x;\Z) \to \widehat H^{2k}(\pi_x;\Z)$ is an isomorphism.
We thus conclude $i_{P_x}^*\widehat q = \widehat T(\widehat{CW}_\theta(\lambda,u))$.

Next we prove (ii):
From \eqref{eq:iota_t_hat} we conclude 
$$
i_{P_x}^*c(\widehat q) 
= 
c(i_{P_x}^*\widehat q) 
\stackrel{\eqref{eq:iota_t_hat}}{=}
c(\widehat T(\widehat{CW}_\theta(\lambda,u)))
\stackrel{\eqref{eq:cT}}{=}
T(c(\widehat{CW}_\theta(\lambda,u)))
=
T(u).
$$
Thus the characteristic class $c(\widehat q)$ of any differential $u$-trivialization $\widehat q$ is a $u$-trivialization class.
In particular, $u(P)=0$.

Now we prove (iii):
Let $q \in H^{2k-1}(P;\Z)$ be a $u$-trivialization class, in particular, $u(P)=0$.
By (i) e know that differential $u$-trivializations exist. 
We aim at constructing a differential $u$-trivialization $\widehat q$ with characteristic class $c(\widehat q)=q$.
Let $\widehat q' \in \widehat H^{2k-1}(P;\Z)$ be any differential $u$-trivialization with differential form $\varrho'$.
Put $q':= c(\widehat q')$.
Since $q$ and $q'$ are both $u$-trivialization classes, we have $q - q' = \pi^*w$ for some $w \in H^{2k-1}(X;\Z)$.
Choose a differential character $\widehat w \in \widehat H^{2k-1}(X;\Z)$ with characteristic class $c(\widehat w) = w$.
Now put $\widehat q:= \widehat q' + \pi^*\widehat w$.
Then we have $c(\widehat q)=q$.
Put $\varrho:= \varrho'-\curv(\widehat w)$.
Then we have $\ti_{\id}(\widehat w) = \iota_{\id}(-\curv(w),0)$ and hence
\begin{align*}
-\ti_\pi(\widehat q)
&=
-\ti_\pi(\widehat q') - \ti_\pi(\pi^*\widehat w) \\
&=
\widehat{CCS}_\theta(\lambda,u) - \iota_\pi(\varrho',0) - (\id_X,\pi)^*\ti_{\id}(\widehat w) \\
&=
\widehat{CCS}_\theta(\lambda,u) - \iota_\pi(\varrho' - \curv(\widehat w),0) \\
&=
\widehat{CCS}_\theta(\lambda,u) -\iota_\pi(\varrho,0). 
\end{align*}
Thus $\widehat q$ is a differential $u$-trivialization.

Finally, (iv) follows immediately from \eqref{eq:def_q_hat} and $\cov(\ti(\widehat q)) = -\curv(\widehat q)$.
\end{proof}

It is well-known that the set of all $u$-trivialization classes $u \in H^{2k-1}(P;\Z)$ is a torsor for the action of the additive group $H^{2k-1}(X;\Z)$.
We show that the analoguous statement holds for the set of differential $u$-trivializations.

\begin{prop}\label{prop:torsor}
Let $G$ be a Lie group with finitely many components and $(\lambda,u) \in K^{2k}(G;Z)$.
Let $\pi:(P,\theta) \to X$ be a principal $G$-bundle with connection.
The differential cohomology group $ \widehat H^{2k-1}(X;\Z)$ acts on the set of all differential $u$-trivializations by $(\widehat q,h) \mapsto \widehat q + \pi^*h$.

Moreover, the set of differential $u$-trivializations is a torsor for the additive group $\pi^*\widehat H^{2k-1}(X;\Z)$.
\end{prop}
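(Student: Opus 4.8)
The plan is to treat the two assertions in turn: that $(\widehat q,h)\mapsto \widehat q + \pi^*h$ is a well-defined action of $\widehat H^{2k-1}(X;\Z)$, and that the restriction of this action to the image subgroup $\pi^*\widehat H^{2k-1}(X;\Z)$ is free and transitive.

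\emph{Well-definedness and freeness.} First I would verify that $\widehat q + \pi^* h$ is again a differential $u$-trivialization for any $h\in\widehat H^{2k-1}(X;\Z)$. This is essentially the computation in the proof of Proposition~\ref{prop:diff_triv}(iii): commutativity of the middle square of \eqref{eq:diag_diff_triv} gives $\ti_\pi\circ\pi^* = (\id_X,\pi)^*\circ\ti_{\id}$, and together with the identity $\ti_{\id}(h)=\iota_{\id}(-\curv(h),0)$ this yields $\ti_\pi(\pi^*h)=\iota_\pi(-\curv(h),0)$. Substituting into the defining relation \eqref{eq:def_q_hat} shows that $\widehat q+\pi^*h$ satisfies \eqref{eq:def_q_hat} with form $\varrho-\curv(h)$, so it is a differential $u$-trivialization and the action is well-defined. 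Since every element of $\pi^*\widehat H^{2k-1}(X;\Z)\subset\widehat H^{2k-1}(P;\Z)$ has the form $\pi^*h$, the action of this subgroup is translation inside an abelian group, hence automatically free.

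\emph{Transitivity.} The substantive step is to show that any two differential $u$-trivializations $\widehat q_1,\widehat q_2$ differ by an element of $\pi^*\widehat H^{2k-1}(X;\Z)$. Let $\varrho_1,\varrho_2$ be the forms attached to them by \eqref{eq:def_q_hat}. By Lemma~\ref{lem:q_hat_varrho}, $\iota_{\id}(\varrho_1,0)$ and $\iota_{\id}(\varrho_2,0)$ are both global sections of the \emph{same} Cheeger-Simons character $\widehat{CW}_\theta(\lambda,u)$, so their difference $\iota_{\id}(\varrho_1-\varrho_2,0)$ lies in $\ker(\vds_{\id})=\operatorname{im}(\ti_{\id})$ by exactness of \eqref{eq:long_ex_sequ}. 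Using $\ti_{\id}(h)=\iota_{\id}(-\curv(h),0)$ once more, this realizes $\varrho_1-\varrho_2$ as a form with integral periods, hence as $\curv(h)$ for some $h\in\widehat H^{2k-1}(X;\Z)$. By the action computation above, $\widehat q_1+\pi^*h$ is then a differential $u$-trivialization whose form is $\varrho_1-\curv(h)=\varrho_2$, matching that of $\widehat q_2$. Two differential $u$-trivializations with equal forms have the same right-hand side in \eqref{eq:def_q_hat}, so their difference lies in $\ker(\ti_\pi)=\pi^*(j(H^{2k-2}(X;\Ul)))$ (the top row of \eqref{eq:diag_diff_triv}); writing $\widehat q_2-(\widehat q_1+\pi^*h)=\pi^*j(w)$ gives $\widehat q_2-\widehat q_1=\pi^*(h+j(w))\in\pi^*\widehat H^{2k-1}(X;\Z)$. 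This proves transitivity, and together with freeness establishes the torsor property; nonemptiness is Proposition~\ref{prop:diff_triv}(i).

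\emph{Main obstacle.} The one point requiring care is producing $h$ with $\curv(h)=\varrho_1-\varrho_2$. A direct approach would need $\varrho_1-\varrho_2$ to have integral periods, which is not visibly available: one only knows $d\varrho_i=CW_\theta(\lambda)$, so $\varrho_1-\varrho_2$ is closed, and integrality of periods of the pullback $\pi^*(\varrho_1-\varrho_2)$ on $P$ need not descend to $X$ when $\pi_*$ fails to be surjective on homology. The device that circumvents this is to read integrality off the exact sequence: placing $\iota_{\id}(\varrho_1-\varrho_2,0)$ in $\operatorname{im}(\ti_{\id})$ exhibits $\varrho_1-\varrho_2$ as a curvature directly on $X$. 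I would be careful to track signs in $\ti_{\id}(h)=\iota_{\id}(-\curv(h),0)$ and in \eqref{eq:def_q_hat} (they decide whether one corrects by $\pi^*h$ or $\pi^*(-h)$), but the remainder is a routine diagram chase in \eqref{eq:diag_diff_triv}.
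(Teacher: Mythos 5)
Your proof is correct and follows essentially the same route as the paper: the action is verified by the same computation showing $\widehat q+\pi^*h$ satisfies \eqref{eq:def_q_hat} with form $\varrho-\curv(h)$, and transitivity is obtained by using Lemma~\ref{lem:q_hat_varrho} together with $\ker(\vds_{\id})=\operatorname{im}(\ti_{\id})$ to produce $h$ with $\ti_{\id}(h)=\iota_{\id}(\varrho_1-\varrho_2,0)$, then absorbing the remaining ambiguity into $\ker(\ti_\pi)=\pi^*j\big(H^{2k-2}(X;\Ul)\big)$. The only cosmetic difference is that you first translate $\widehat q_1$ to match forms and then compare, whereas the paper compares $\ti_\pi(\widehat q-\widehat q')$ with $\ti_\pi(\pi^*h')$ directly; the substance is identical.
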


\begin{proof}
Let $\widehat q$ be a differential $u$-trivialization with differential form $\varrho$ and $h \in \widehat H^{2k-1}(X;\Z)$.
As above we have $\ti_{\id}(h) = \iota_{\id}(-\curv(h),0)$ and hence 
\begin{align*}
-\ti_\pi(\widehat q + \pi^*h)
&=
\widehat{CCS}_\theta(\lambda,u) - \iota_\pi(\varrho,0) - \iota_\pi(-\curv(h),0) \\
&=
\widehat{CCS}_\theta(\lambda,u) - \iota_\pi(\varrho-\curv(h)). 
\end{align*}
Thus $\widehat q + \pi^*h$ is a differential $u$-trivialization with differential form $\varrho - \curv(h)$.
Hence the differential cohomology group $\widehat H^{2k-1}(X;\Z)$ acts on the set of differential $u$-trivializations.

The action of the group $\widehat H^{2k-1}(X;\Z)$ on the set of differential $u$-trivializations is in general not free.
The kernel of the map $\pi^*:\widehat H^{2k-1}(X;\Z) \to \widehat H^{2k-1}(P;\Z)$ is contained in the image of the map $j:H^{2k-2}(X;\Ul) \to \widehat H^{2k-1}(X;\Z)$.
By injectivity of the latter, the kernel consists of characters of the form $j(v)$, where $v$ is in the kernel of $\pi^*:H^{2k-2}(X;\Ul) \to H^{2k-2}(P;\Ul)$.  

We need to show that the action is transitive. 
Let $\widehat q$ and $\widehat q'$ be differential $u$-trivializations with differential forms $\varrho$ and $\varrho'$, respectively.
By Lemma~\ref{lem:q_hat_varrho} we have $\vds_{\id}(\iota_{\id}(\varrho,0)) = \widehat{CW}_\theta(\lambda,u) = \vds_{\id}(\iota_{\id}(\varrho',0))$.
From the long exact sequence \eqref{eq:long_ex_sequ} we thus obtain a differential character $h' \in \widehat H^{2k-1}(X;\Z)$ such that $\ti_{\id}(h') = \iota_{\id}(\varrho-\varrho',0)$.
This yields 
$$
\ti_\pi(\widehat q-\widehat q') = \iota_\pi(\varrho-\varrho',0) = \ti_\pi(\pi^*h).
$$
From the upper row in \eqref{eq:diag_diff_triv}, we conclude $\widehat q - \widehat q' = \pi^*(h' + j(v))$ for some $v \in H^{2k-2}(X;\Ul)$.
Put $h:= h' + j(u) \in \widehat H^{2k-1}(X;\Z)$.
Then we have $\widehat q - \widehat q' = \pi^*h$.
\end{proof}

\begin{rem}\label{rem:weak}
In general, the condition~\eqref{eq:iota_t_hat} is weaker than \eqref{eq:def_q_hat}:
Let $\mu \in \Omega^{2k-2}(X)$ and $f \in C^\infty(P)$ not constant along the fibers.
Put $\eta:= f \cdot \pi^*\mu$.
Then $\eta$ vanishes upon pull-back to any fiber $P_x$.
Moreover, $d\eta = df \wedge \pi^*\mu + f \cdot \pi^*d\mu$ is not the pull-back of a form on the base $X$.
Now let $\widehat q$ be any differential $u$-trivialization.
Then $\widehat q + \iota(\eta) \in \widehat H^{2k-1}(P;\Z)$ still satisfies \eqref{eq:iota_t_hat}, since 
$$
i_{P_x}^*(\widehat q + \iota(\eta)) 
= 
i_{P_x}^*\widehat q + \iota(i_{P_x}^*\eta) 
= 
\widehat T(\widehat{CW}_\theta(\lambda,u)).
$$
But $\widehat q + \iota(\eta)$ does not satisfy \eqref{eq:curv_diff_triv} since $\curv(\widehat q + \iota(\eta)) - CS_\theta(\lambda) = \pi^*\varrho + d\eta$ is not the pull-back of a form on $X$.
This implies that $\widehat q + \iota(\varrho)$ does not satisfy \eqref{eq:def_q_hat}.
\end{rem}

\begin{rem}
In general, even the two conditions \eqref{eq:curv_diff_triv} and \eqref{eq:iota_t_hat} together do not imply \eqref{eq:def_q_hat}: 
Suppose there exists a closed form $\nu \in \Omega^{2k-1}(X)$ such that $\pi^*\nu = d\eta$ for some $\eta \in \Omega^{2k-2}(P)$ and $i_{P_x}^*\eta$ is exact for any $x \in X$.
Without loss of generality we assume that $\nu$ does not have integral periods.
Let $\widehat q$ be a differential $u$-trivialization with differential form $\varrho$.
Put $h:= \widehat q + \iota(\eta)$.
Then we have 
$$
i^*_{P_x}h 
= 
i_{P_x}^*\widehat q + \iota(i_{P_x}^*\eta) 
= 
\widehat T(\widehat{CW}_\theta(\lambda,u)),
$$ 
since $i_{P_x}^*\eta$ is exact and thus $\iota(i_{P_x}^*\eta)=0$.
Thus $h$ satisfies \eqref{eq:iota_t_hat}.
Moreover, $h$ satisfies \eqref{eq:curv_diff_triv}, since
$$
\curv(h) 
= 
\curv(\widehat q) + d\eta 
= 
CS_\theta(\lambda) - \pi^*(\varrho-\nu).
$$
But we have $-\ti(h) = -\ti(\widehat q) - i_\pi(0,\eta) = \widehat{CCS}_\theta(\lambda,u) - i_\pi(\varrho,\eta)$.
Thus $h$ satisfies \eqref{eq:def_q_hat} if and only if $\iota_\pi(\varrho,\eta) = \iota_\pi(\varrho',0)$ for some $\varrho' \in \Omega^{2k-1}(X)$.
The latter condition is equivalent to $(\varrho-\varrho',\eta)$ being closed with integral periods.
By assumption, $d\eta = \pi^*\nu$ and $d\nu=0$.
Thus we necessarily have $\varrho-\varrho' = \nu$.
But by assumption $\nu$ does not have integral periods, and so neither does $(\nu,\eta) = (\varrho-\varrho',\eta)$.
\end{rem}

\subsection{Dependence upon the connection}\label{subsec:depend_conn_2}
Since the Cheeger-Chern-Simons character $\widehat{CCS}_\theta(\lambda,u)$ depends upon the connection $\theta$, so do differential $u$-trivializations:

\begin{prop}\label{prop:depend_conn_diff_triv}
Let $G$ be a Lie group with finitely many components and $(\lambda,u) \in K^{2k}(G;Z)$.
Let $\pi:P \to X$ be a principal $G$-bundle with connections $\theta_0$, $\theta_1 \in \mathcal A(P)$.
Define $\alpha(\theta_0,\theta_1;\lambda) \in \Omega^{2k-2}(P)$ as in Section~\ref{subsec:depend_conn_1}.

Then the following holds: if $\widehat q \in \widehat H^{2k-1}(P;\Z)$ is a differential $u$-trivialization on $(P,\theta_0)$ with differential form $\varrho$, 
then $\widehat q - \iota(\alpha(\theta_0,\theta_1;\lambda))$ is a differential $u$-trivialization on $(P,\theta_1)$ with differential form $\varrho + CS(\theta_0,\theta_1;\lambda)$. 
\end{prop}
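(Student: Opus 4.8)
The plan is to verify the defining identity \eqref{eq:def_q_hat} for the connection $\theta_1$ directly, with the candidate differential form $\varrho + CS(\theta_0,\theta_1;\lambda)$. Abbreviating $\alpha := \alpha(\theta_0,\theta_1;\lambda) \in \Omega^{2k-2}(P)$ and $CS := CS(\theta_0,\theta_1;\lambda) \in \Omega^{2k-1}(X)$, the goal is to show
\[
-\ti_\pi\big(\widehat q - \iota(\alpha)\big) = \widehat{CCS}_{\theta_1}(\lambda,u) - \iota_\pi(\varrho + CS,0).
\]
Since by Lemma~\ref{lem:q_hat_varrho} the differential form attached to a differential $u$-trivialization is uniquely determined, establishing this single identity simultaneously proves that $\widehat q - \iota(\alpha)$ is a differential $u$-trivialization on $(P,\theta_1)$ and that its associated form is $\varrho + CS$.

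First I would expand the left-hand side using linearity of $\ti_\pi$ together with the compatibility of topological trivialization with the mapping cone sequence, namely $\ti_\pi(\iota(\alpha)) = \iota_\pi(0,\alpha)$; this is the same compatibility used implicitly in Remark~\ref{rem:kernel_T_hat} and is to be quoted from the appendix on relative characters. Substituting the hypothesis $-\ti_\pi(\widehat q) = \widehat{CCS}_{\theta_0}(\lambda,u) - \iota_\pi(\varrho,0)$ then yields
\[
-\ti_\pi\big(\widehat q - \iota(\alpha)\big) = \widehat{CCS}_{\theta_0}(\lambda,u) - \iota_\pi(\varrho,0) + \iota_\pi(0,\alpha).
\]

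Next I would invoke the connection-dependence formula \eqref{eq:CCS_two_conn}, rewritten as $\widehat{CCS}_{\theta_0}(\lambda,u) = \widehat{CCS}_{\theta_1}(\lambda,u) - \iota_\pi(CS,\alpha)$. The key observation is then that, by additivity of $\iota_\pi$ in its pair of forms, the two $\alpha$-components cancel:
\[
\iota_\pi(0,\alpha) - \iota_\pi(CS,\alpha) - \iota_\pi(\varrho,0) = \iota_\pi(-CS-\varrho,0).
\]
This produces exactly the right-hand side $\widehat{CCS}_{\theta_1}(\lambda,u) - \iota_\pi(\varrho + CS,0)$, completing the argument.

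The computation is formal once these inputs are assembled, so I do not expect a genuine obstacle; the only step requiring care is the compatibility $\ti_\pi \circ \iota = \iota_\pi(0,\,\cdot\,)$ of the two topological trivialization maps under the mapping cone map. As a sanity check, differentiating the forms confirms consistency with Lemma~\ref{lem:q_hat_varrho}: from $d\varrho = CW_{\theta_0}(\lambda)$ and $d\,CS(\theta_0,\theta_1;\lambda) = CW_{\theta_1}(\lambda) - CW_{\theta_0}(\lambda)$ one obtains $d(\varrho + CS) = CW_{\theta_1}(\lambda)$, as required for a differential $u$-trivialization on $(P,\theta_1)$.
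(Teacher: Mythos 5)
Your proof is correct and follows essentially the same route as the paper: both rest on the defining identity \eqref{eq:def_q_hat}, the connection-dependence formula \eqref{eq:CCS_two_conn}, additivity of $\iota_\pi$, and the compatibility $\ti_\pi(\iota(\alpha)) = \iota_\pi(0,\alpha)$. The only difference is presentational (you start from the target identity and expand, while the paper rewrites $-\ti_\pi(\widehat q)$ forward), plus your harmless extra remarks on uniqueness of the form and the consistency check $d(\varrho+CS)=CW_{\theta_1}(\lambda)$.
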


\begin{proof}
By definition, we have
\begin{align*}
-\ti(\widehat q) 
&\stackrel{\eqref{eq:def_q_hat}}{=}
\widehat{CCS}_{\theta_0}(\lambda,u) - \iota_\pi(\varrho,0)  \\
&\stackrel{\eqref{eq:CCS_two_conn}}{=}
\widehat{CCS}_{\theta_1}(\lambda,u) - \iota_\pi(CS(\theta_0,\theta_1;\lambda),\alpha(\theta_0,\theta_1;\lambda)) - \iota_\pi(\varrho,0) \\
&=
\widehat{CCS}_{\theta_1}(\lambda,u) - \iota_\pi(\varrho+CS(\theta_0 ,\theta_1;\lambda),0) - \iota_\pi(0,\alpha(\theta_0,\theta_1;\lambda)) \\
&=
\widehat{CCS}_{\theta_1}(\lambda,u) - \iota_\pi(\varrho+CS(\theta_0 ,\theta_1;\lambda),0) - \ti(\iota(\alpha(\theta_0,\theta_1;\lambda))).
\end{align*}
Thus the differential character $\widehat q - \iota(\alpha(\theta_0,\theta_1;\lambda)) \in \widehat H^{2k-1}(P;\Z)$ and the differential form $\varrho + CS(\theta_0,\theta_1;\lambda) \in \Omega^{2k-1}(X)$ together satisfy condition~\eqref{eq:def_q_hat} on $(P,\theta_1)$. 
\end{proof}

%%%%%%%%%%%%%%%%%%%%%%%%%%%%%%%%%%%%%%%%%%%%%%%%%%%%%%%%%%%%%%%%%%%%%%%%%
\section{Differential String classes and (higher) Chern-Simons theories}\label{sec:diff_string}
%%%%%%%%%%%%%%%%%%%%%%%%%%%%%%%%%%%%%%%%%%%%%%%%%%%%%%%%%%%%%%%%%%%%%%%%%
In this section, we establish our notion of differential String classes on a principal $\Spin_n$-bundle with connection $\pi:(P,\theta) \to X$, where $n \geq 3$.
We obtain this notion by specializing the notion of differential $u$-trivialization to the case $u = \frac{1}{2} p_1 \in H^4(B\Spin_n;\Z)$.
This notion of corresponds to \emph{geometric String structures} from \cite{W13}, i.e.~trivializations of the Chern-Simons bundle $2$-gerbe, together with a compatible connection.

There are four essentially equivalent ways to define differential String classes:
the first due to Waldorf views differential String classes as stable isomorphism classes of geometric String structures \cite{W13,W14}.
The second is implicit in the work of Redden \cite{R11}: on a compact Riemannian manifold $(X,g)$ there is a canonical String class $q \in \widehat H^3(P;\Z)$, defined via Hodge theory and adiabiatic limits.
It has a canonical refinement to a differential String class $\widehat q \in \widehat H^3(P;\Z)$. 
Thirdly, one may define differential String classes analogously to String classes as characters on $P$ which restrict on any fiber to the so-called \emph{basic} differential character in $\widehat H^3(\Spin_n;\Z)$, the stable isomorphism class of the so-called \emph{basic gerbe} on $\Spin_n$.
Finally, our own notion regards differential String classes as differential $\frac{1}{2}p_1$-trivializations in the sense of Section~\ref{sec:diff_triv}.
None of the first three ways to regard differential String classes directly generalizes to higher order structures.
In contrast, our notion does: differential $u$-trivializations can be defined for any universal characteristic class $u \in \widehat H^*(BG;\Z)$ on any Lie group $G$ with finitely many components.

In this Section, we first recall the notion of String structures and String classes.
Next we introduce our notion of differential String classes by specializing differential $u$-trivializations to the case $u = \frac{1}{2} p_1 \in H^4(B\Spin_n;\Z)$.
Then we prove the equivalence of the four above mentioned notions of differential String classes.
Finally, we discuss (higher) Chern-Simons theories arising from differential String classes (and more generally, differential $u$-trivializations) and transgression to loop space.

\subsection{String structures and String classes}
The group $\String_n$ is by definition a $3$-connected cover of $\Spin_n$.
It is defined only up to homotopy.
As is well known, the homotopy type $\String_n$ cannot be represented a finite dimensional Lie group since any such group has non-vanishing $\pi_3$.
There exist several models of $\String_n$, either as a topological group \cite{S96,ST04}, as a Lie $2$-group1 \cite{BCSS07,H08,S11} or as an infinite dimensional Fr\'echet Lie group \cite{NSW13}.
In the latter case a String structure (in the Lie theoretic sense) is defined as a lift of the structure group of $\pi:P \to X$ from $\Spin_n$ to $\String_n$.

$\String_n$ is defined as the homotopy fiber of a classifying map $\lambda:B\Spin_n \to K(\Z;4)$ for the generator $\frac{1}{2} p_1 \in H^4(B\Spin_n;\Z) \cong H^3(\Spin_n;\Z) \cong \pi_3(\Spin_n) \cong \Z$. 
A String structure (in the homotopy theoretic sense) on a principal $\Spin_n$-bundle $\pi:P\to X$ is a homotopy class of lifts $\widetilde f$ of classifying maps $f$ for the bundle $\pi:P \to X$:
\begin{equation*}
\xymatrix{
&& B\String_n \ar[d] && \\
X \ar@{.>}[urr]^{\widetilde f} \ar_f[rr] && B\Spin_n \ar[rr]^{\lambda} && K(\Z;4). 
}
\end{equation*}
Isomorphism classes of String structures in the Lie theoretic sense correspond to String structures in the homotopy theoretic sense.   
Clearly, a principal $\Spin_n$-bundle $\pi:P \to X$ admits a String structure (in either sense) if and only if $\frac{1}{2} p_1(P)=0$.
Since $H^j(\Spin(n);\Z)=0$ for $j= 1,2$, by \cite{R11} there is a 1-1 correspondence between isomorphism classes of String structures on $P$ and $\frac{1}{2} p_1$-trivialization classes $q \in H^3(P;\Z)$.
These classes are called \emph{String classes}.

\subsection{Differential String classes}
We derive our notion of differential String classes by specializing the concept of differential trivializations of universal characteristic classes of principal $G$-bundles from Section~\ref{sec:diff_triv}.
For $G = \Spin_n$, $n \geq 3$, the Chern-Weil construction yields an isomorphism\footnote{Here $I^k_0(G)$ denotes the space of invariant polynomials of degree $k$, the Chern-Weil forms of which have integral periods.} $I^2_0(\Spin_n) \to H^4_{dR}(B\Spin_n;\Z) \cong H^4(B\Spin_n;\Z) \cong \Z$.
We thus write the elements of $K^4(\Spin_n;\Z)$ simply as $\lambda$ or $u$ instead of pairs $(\lambda,u)$. 

Let $(P,\theta) \to X$ be a principal $\Spin_n$-bundle with connection.
The invariant polynomial $\lambda \in I^2_0(\Spin_n) \cong H^4(B\Spin_n;\Z)$ yields the Chern-Weil form $CW_\theta(\lambda)\in \Omega^4_0(X)$ and the Cheeger-Simons character $\widehat{CW}_\theta(\lambda) \in \widehat H^4(X;Z)$ with curvature $\curv(\widehat{CW}_\theta(\lambda)) = CW_\theta(\lambda)$ and characteristic class $c(\widehat{CW}_\theta(\lambda)) = u$.
Moreover, we have the Chern-Simons form\footnote{For $\lambda = \frac{1}{2}p_1$ the Chern-Simons form $CS_\theta(\frac{1}{2}p_1)$ is the usual Chern-Simons $3$-form for $\Spin_n$.} $CS_\theta(\lambda) \in \Omega^3(P)$ and the Cheeger-Chern-Simons character $\widehat{CCS}_\theta(\lambda) \in \widehat H^4(\pi;\Z)$ with covariant derivative $\cov(\widehat{CCS}_\theta(\lambda)) = CS_\theta(\lambda)$.
Since $H^4(B\Spin_n;\Z) \cong I^2_0(\Spin_n) \cong K^4(\Spin_n;\Z) \cong \Z$ with generator $\frac{1}{2}p_1$, we may write $\lambda = \ell \cdot \frac{1}{2} p_1$ for some $\ell \in \Z$.
As customary in the physics literature, we call $\ell$ the \emph{level} of $\lambda$.

\begin{defn}\label{def:diff_string}
Let $\pi:(P,\theta) \to X$ be a principal $\Spin_n$-bundle with connection, where $n \geq 3$.
A \emph{differential String class} on $(P,\theta)$ is a differential $\frac{1}{2} p_1$-trivialization, i.e.~a differential character $\widehat q \in \widehat H^3(P;\Z)$ such that
\begin{equation}\label{eq:def_q_hat_Spin}
-\ti_\pi(\widehat q)
=
\widehat{CCS}_\theta(\frac{1}{2} p_1) - \iota_\pi(\varrho,0) 
\end{equation} 
for some $\varrho \in \Omega^3(X)$.
A \emph{differential String class at level $\ell$} is a differential $\ell \cdot \frac{1}{2} p_1$-trivialization, i.e.~a differential character $\widehat q \in \widehat H^3(P;\Z)$ such that
\begin{equation}\label{eq:def_q_hat_Spin_level}
-\ti_\pi(\widehat q)
=
\widehat{CCS}_\theta(\ell \cdot \frac{1}{2} p_1) - \iota_\pi(\varrho,0) 
\end{equation} 
for some $\varrho \in \Omega^3(X)$.
\end{defn}

\begin{prop}
Let $\widehat q \in \widehat H^3(P;\Z)$ be a differential String class on $\pi:(P,\theta) \to X$ with differential form $\varrho \in \Omega^3(X)$.
Let $x \in X$ be an arbitrary point.
Then we have
\begin{align}
\curv(\widehat q) 
&= 
CS_\theta(\frac{1}{2}p_1) - \pi^*\varrho \label{eq:curv_String} \\
-\ti_\pi(c(\widehat q)) 
&= 
\frac{1}{2} \widetilde{p_1}(P) \label{eq:c_String} \\
i_{P_x}^*\widehat q 
&=
\widehat T(\widehat{CW}_\Theta(\frac{1}{2} p_1)). \label{eq:transg_String}
\end{align}
Here $\frac{1}{2}\widetilde p_1 \in H^4(\pi_{E\Spin_n};\Z)$ denotes the mapping cone class corresponding to the universal cohomology class $\frac{1}{2} p_1 \in H^4(B\Spin_n;\Z)$ under the isomorphism $H^*(\pi_{E\Spin_n};\Z) \to H^*(B\Spin_n;\Z)$. 
\end{prop}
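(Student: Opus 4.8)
The plan is to read off all three identities from the general theory of differential $u$-trivializations established in Proposition~\ref{prop:diff_triv}, specialized to $G=\Spin_n$ and the pair corresponding to $u=\frac{1}{2}p_1 \in H^4(B\Spin_n;\Z)$, which we abbreviate simply as $\frac{1}{2}p_1$ since $K^4(\Spin_n;\Z)\cong\Z$. Two of the three equations are then immediate restatements. Equation~\eqref{eq:curv_String} is exactly the curvature formula \eqref{eq:curv_diff_triv} of Proposition~\ref{prop:diff_triv}(iv) with $\lambda=\frac{1}{2}p_1$. Equation~\eqref{eq:transg_String} is exactly \eqref{eq:iota_t_hat} of Proposition~\ref{prop:diff_triv}(v): the fiber restriction $i_{P_x}^*\widehat q$ equals the transgressed Cheeger-Simons character, and because the fiber over $x$ factors through the universal bundle and the Cheeger-Chern-Simons construction is natural, this transgression is the \emph{universal} one built from $\widehat{CCS}_\Theta$ via \eqref{def:T_hat}; hence the right-hand side is $\widehat T(\widehat{CW}_\Theta(\frac{1}{2}p_1))$ with the universal connection $\Theta$, exactly as stated.

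For the remaining identity~\eqref{eq:c_String}, I would apply the characteristic class homomorphism $c$ to the defining equation~\eqref{eq:def_q_hat_Spin} of a differential String class. Three inputs are needed. First, topological triviality of the image of $\iota_\pi$ gives $c(\iota_\pi(\varrho,0))=0$. Second, by \eqref{eq:CCS_CCS} the characteristic class of the Cheeger-Chern-Simons character is the mapping cone class, so $c(\widehat{CCS}_\theta(\frac{1}{2}p_1))=\frac{1}{2}\widetilde{p_1}(P)$. Third, the characteristic class map commutes with the mapping cone homomorphism $\ti_\pi$, that is $c\circ\ti_\pi=\ti_\pi\circ c$, where on the right $\ti_\pi\colon H^3(P;\Z)\to H^4(\pi;\Z)$ is the singular mapping cone map. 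Combining these, applying $c$ to \eqref{eq:def_q_hat_Spin} yields $-\ti_\pi(c(\widehat q))=\frac{1}{2}\widetilde{p_1}(P)$, which is precisely \eqref{eq:c_String}.

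The only point requiring genuine care — and what I regard as the main, if modest, obstacle — is the compatibility $c\circ\ti_\pi=\ti_\pi\circ c$ between the differential and the singular mapping cone long exact sequences, together with the correct bookkeeping under the isomorphism $H^4(\pi_{E\Spin_n};\Z)\cong H^4(B\Spin_n;\Z)$ identifying $\frac{1}{2}\widetilde{p_1}$ with $\frac{1}{2}p_1$. Both follow from the structure of the long exact sequence~\eqref{eq:long_ex_sequ} and the naturality of the maps $\ti$, $\iota$ and $c$ recorded in Appendix~\ref{app:characters} and \cite{BB13}; no argument beyond Proposition~\ref{prop:diff_triv} and the elementary computation above is required, so the proof is essentially a specialization and bookkeeping exercise.
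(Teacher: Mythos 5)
Your proof is correct and follows essentially the same route as the paper: the paper likewise obtains \eqref{eq:curv_String} and \eqref{eq:c_String} by applying $(\curv,\cov)$ and $c$ to the defining equation \eqref{eq:def_q_hat_Spin}, and \eqref{eq:transg_String} by restricting to the fiber and invoking the definition of $\widehat T$. Your only deviation is to cite Proposition~\ref{prop:diff_triv}(iv) and (v) for the first and third identities rather than redoing the computations in the special case $u=\frac{1}{2}p_1$, which is a legitimate shortcut since those parts were proved by exactly the arguments the paper repeats here.
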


\begin{proof}
By \eqref{eq:def_q_hat_Spin}, we have $\curv(\widehat q) = -\cov(\ti_\pi(\widehat q)) = CS_\theta(\frac{1}{2}p_1) - \pi^*\varrho$.
Similarly, we have $-\ti_\pi(\widehat q) = c(\widehat{CCS}_\theta(\frac{1}{2}p_1)) = \frac{1}{2} \widetilde{p_1}(P)$.
Finally, by restiction to the fiber over $x$, we obtain $-\ti_{\pi_x}(i_{P_x}^*\widehat q) = (i_x,i_{P_x})^*\widehat{CCS}_\theta(\frac{1}{2} p_1)$.
By Definition~\ref{def:T_hat}, this is equivalent to $i_{P_x}^*\widehat q = \widehat T(\widehat{CW}_\Theta(\frac{1}{2}p_1))$.
\end{proof}

From Proposition \ref{prop:diff_triv}, we conclude:

\begin{cor}\label{cor:String_prop}
Let $n \geq 3$.
A principal $\Spin_n$-bundle $\pi:(P,\theta) \to X$ with connection admits differential String classes if and only if it is String, i.e.~$\frac{1}{2}p_1(P)=0$.
The characteristic class $c(\widehat q) \in H^3(P;\Z)$ of a differential String class $\widehat q \in \widehat H^3(P;\Z)$ is a String class.
The differential form $\varrho$ in the notion of differential String classes is uniquely determined by the character $\widehat q$.
It satisfies $d\varrho = CW_\theta(\frac{1}{2}p_1)$.

Conversely, if $P$ is String, then for any String class $q \in H^3(P;\Z)$ and any differential form $\varrho \in \Omega^3(X)$ with $d\varrho = CW_\theta(\frac{1}{2}p_1)$, there exist differential String classes $\widehat q \in \widehat H^3(P;\Z)$ with differential form $\varrho$ and $c(\widehat q) = q$.
\end{cor}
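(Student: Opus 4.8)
The plan is to recognize the entire statement as the specialization of the general theory of differential $u$-trivializations to the universal class $u = \frac{1}{2}p_1 \in H^4(B\Spin_n;\Z)$; indeed, by Definition~\ref{def:diff_string} a differential String class on $(P,\theta)$ is by definition nothing but a differential $\frac{1}{2}p_1$-trivialization. Thus every assertion of the corollary is an instance of a statement already established in Section~\ref{sec:diff_triv}, and the proof consists in applying Proposition~\ref{prop:diff_triv} and Lemma~\ref{lem:q_hat_varrho} with $G = \Spin_n$ and reading off the conclusions in String-theoretic language.

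Concretely, I would proceed claim by claim. For the existence criterion I invoke Proposition~\ref{prop:diff_triv}(i): the bundle admits differential $\frac{1}{2}p_1$-trivializations if and only if $\frac{1}{2}p_1(P)=0$, which is precisely the condition that $P$ be String. For the statement that $c(\widehat q)$ is a String class I apply Proposition~\ref{prop:diff_triv}(ii), which yields that $c(\widehat q)$ is a $\frac{1}{2}p_1$-trivialization class; the only extra ingredient is the identification of $\frac{1}{2}p_1$-trivialization classes with String classes, recalled in the preceding subsection following Redden~\cite{R11}, which rests on $\widetilde H^j(\Spin_n;\Z)=0$ for $j=1,2$. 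For the uniqueness and the differential equation satisfied by $\varrho$ I appeal directly to Lemma~\ref{lem:q_hat_varrho}: the form $\varrho$ in \eqref{eq:def_q_hat_Spin} is uniquely determined by $\widehat q$, the relative character $\iota_{\id}(\varrho,0)$ is the global section of the Cheeger-Simons character $\widehat{CW}_\theta(\frac{1}{2}p_1)$ with covariant derivative $\varrho$, and in particular $d\varrho = CW_\theta(\frac{1}{2}p_1)$.

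For the converse I combine the two halves of the machinery. Given a String class $q$ and a form $\varrho \in \Omega^3(X)$ with $d\varrho = CW_\theta(\frac{1}{2}p_1)$, the converse part of Lemma~\ref{lem:q_hat_varrho} shows that $\varrho$ determines differential $\frac{1}{2}p_1$-trivializations, unique up to characters of the form $j(\pi^*w)$; Proposition~\ref{prop:diff_triv}(iii) then allows one to adjust the characteristic class so that $c(\widehat q)=q$, thereby producing a differential String class with the prescribed data. Since the argument is purely a matter of specializing already-proven statements, there is no serious obstacle here; the single non-formal point is the translation between $\frac{1}{2}p_1$-trivialization classes and String classes, which is exactly the low-degree vanishing $\widetilde H^j(\Spin_n;\Z)=0$ for $j=1,2$ and is already recorded in the text.
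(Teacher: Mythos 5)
Your proposal matches the paper exactly: the paper offers no separate argument for this corollary beyond the single line ``From Proposition~\ref{prop:diff_triv}, we conclude,'' i.e.\ it is obtained precisely by specializing Proposition~\ref{prop:diff_triv} and Lemma~\ref{lem:q_hat_varrho} to $u=\frac{1}{2}p_1$ and using Redden's identification of $\frac{1}{2}p_1$-trivialization classes with String classes via $\widetilde H^j(\Spin_n;\Z)=0$ for $j=1,2$. Your claim-by-claim reduction, including the combination of the converse of Lemma~\ref{lem:q_hat_varrho} with Proposition~\ref{prop:diff_triv}(iii) for the last assertion, is exactly the intended derivation.
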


The differential cohomology group $\widehat H^3(X;\Z)$ acts on differential String classes by $(\widehat q,h) \mapsto \widehat q + \pi^*h$.
In the general case of differential $u$-trivializations, this action of the differential cohomology of the base is not free. 
But in the case of differential String classes it is, and we have:

\begin{cor}\label{cor:String_torsor}
Let $\pi:(P,\theta) \to X$ be a principal $\Spin_n$-bundle with connection and $n \geq 3$.
Then the set of differential String classes $\widehat q \in \widehat H^3(P;\Z)$ is a torsor for the additive group $\widehat H^3(X;\Z)$. 
\end{cor}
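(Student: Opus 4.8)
The plan is to deduce Corollary~\ref{cor:String_torsor} from the general Proposition~\ref{prop:torsor} by checking that, in the special case $u = \frac{1}{2}p_1$ with $2k = 4$, the action of $\widehat H^3(X;\Z)$ on differential String classes is not merely transitive but also free. Proposition~\ref{prop:torsor} already supplies the action $(\widehat q, h) \mapsto \widehat q + \pi^*h$ and shows that the set of differential $u$-trivializations is a torsor for $\pi^*\widehat H^{2k-1}(X;\Z)$. Hence the only thing left to verify is that $\pi^*: \widehat H^3(X;\Z) \to \widehat H^3(P;\Z)$ is injective; for then $\pi^*\widehat H^3(X;\Z) \cong \widehat H^3(X;\Z)$ and the torsor statement upgrades to one for the full group $\widehat H^3(X;\Z)$.

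First I would recall from the proof of Proposition~\ref{prop:torsor} that the kernel of $\pi^*$ on $\widehat H^3(X;\Z)$ is contained in the image under $j: H^2(X;\Ul) \to \widehat H^3(X;\Z)$ of the subgroup $\ker\bigl(\pi^*: H^2(X;\Ul) \to H^2(P;\Ul)\bigr)$, and that $j$ is injective. Therefore freeness of the action reduces to the purely topological statement that $\pi^*: H^2(X;\Ul) \to H^2(P;\Ul)$ is injective.

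To establish this injectivity I would invoke the Serre spectral sequence of the fibration $\Spin_n \hookrightarrow P \xrightarrow{\pi} X$ with $\Ul$ coefficients, whose base-edge homomorphism in total degree $2$ is precisely $\pi^*$. For $n \geq 3$ the fiber $\Spin_n$ is $2$-connected, so that $H^1(\Spin_n;\Ul) = H^2(\Spin_n;\Ul) = 0$ (equivalently, $H^j(\Spin_n;\Z) = 0$ for $j = 1,2$ as already noted, together with divisibility of $\Ul$). Consequently the only differential that could have nonzero image in $E_\bullet^{2,0}$, namely $d_2: E_2^{0,1} \to E_2^{2,0}$ with $E_2^{0,1} = H^0\bigl(X; H^1(\Spin_n;\Ul)\bigr) = 0$, vanishes, while all higher differentials into $E_\bullet^{2,0}$ originate in negative filtration degree and hence vanish too. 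Thus $E_2^{2,0} = H^2(X;\Ul)$ survives to $E_\infty^{2,0}$ and injects into $H^2(P;\Ul)$, which is exactly injectivity of $\pi^*$ in degree $2$.

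With this in hand the conclusion is immediate: the action is transitive by Proposition~\ref{prop:torsor} and free by the injectivity just established, so the set of differential String classes is a torsor for $\widehat H^3(X;\Z)$. The only genuinely nontrivial point, which I expect to be the main obstacle, is the degree-$2$ injectivity of $\pi^*$ with $\Ul$ coefficients; once the low-degree connectivity of $\Spin_n$ is fed into the edge-map computation, everything else is bookkeeping carried over verbatim from Proposition~\ref{prop:torsor}.
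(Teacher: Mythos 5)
Your proposal is correct and follows essentially the same route as the paper: transitivity and the action come from Proposition~\ref{prop:torsor}, and freeness is reduced to injectivity of $\pi^*$ in low degree, which rests on the $2$-connectivity of $\Spin_n$. The paper quotes the Leray--Serre exact sequence in degree $3$ with $\Z$-coefficients and leaves the passage to $\widehat H^3$ terse, whereas you make explicit the step that is actually needed, namely injectivity of $\pi^*:H^2(X;\Ul)\to H^2(P;\Ul)$; this is a welcome clarification but not a different argument.
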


\begin{proof}
Since $\widetilde H^i(\Spin_n;\Z) = \{0\}$ for $i\leq 2$, the Leray-Serre sequence yields the following exact sequence \cite[Prop.~2.5]{R11}:
\begin{equation}\label{eq:Serre_sequ}
\xymatrix{
0 \ar[r] & H^3(X;\Z) \ar^{\pi^*}[r] & H^3(P;\Z) \ar[r]^(0.45){i_x^*} & H^3(\Spin_n;\Z) \ar^(0.55){T^{-1}}[r] & H^4(X;\Z). 
}
\end{equation}
From the long exact sequence \eqref{eq:short_ex_sequ} we conclude that the pull-back $\pi^*:\widehat H^3(X;\Z) \to \widehat H^3(P;\Z)$ is injective.
Thus the action of the additive group $\widehat H^3(X;\Z)$ on differential String classes is free.
Hence by Proposition~\ref{prop:diff_triv}, the set of differential String classes on $(P,\theta)$ is a torsor for the differential cohomology group $\widehat H^3(X;\Z)$.
\end{proof}

As discussed in Section~\ref{subsec:depend_conn_2}, differential $u$-trivializations depend upon the choice of connection.
Obviously, we find the same dependence of differential String classes and their differential forms on the connection:

\begin{cor}
Let $\theta_0,\theta_1 \in \mathcal A(P)$ be connections on the principal $\Spin_n$-bundle $\pi:P \to X$.
Define $CS(\theta_0,\theta_1;\frac{1}{2}p_1) \in \Omega^3(X)$ and $\alpha(\theta_0,\theta_1;\frac{1}{2}p_1) \in \Omega^2(P)$ be as in Section~\ref{subsec:depend_conn_1}.

Then the following holds: if $\widehat q $ is a differential String class on $(P,\theta_0)$ with differential form $\varrho$, then $\widehat q - \iota(\alpha(\theta_0,\theta_1;\frac{1}{2}p_1))$ is a differential String class on $(P,\theta_1)$ with differential form $\varrho + CS(\theta_0,\theta_1;\frac{1}{2}p_1)$. 
\end{cor}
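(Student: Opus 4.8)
The plan is to recognize this corollary as the specialization of Proposition~\ref{prop:depend_conn_diff_triv} to the group $G = \Spin_n$ and the universal class $u = \frac{1}{2}p_1 \in H^4(B\Spin_n;\Z)$. By Definition~\ref{def:diff_string}, a differential String class on $(P,\theta)$ is precisely a differential $\frac{1}{2}p_1$-trivialization in the sense of Definition~\ref{def:q_hat}, in the degree $2k = 4$, i.e.~$k = 2$. Thus the statement to be proved is nothing but the content of Proposition~\ref{prop:depend_conn_diff_triv} read in this degree.

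First I would verify that the hypotheses of Proposition~\ref{prop:depend_conn_diff_triv} apply. The group $\Spin_n$ is compact, hence a Lie group with finitely many components. As recorded at the start of Section~\ref{sec:diff_string}, the Chern-Weil construction yields an isomorphism $I^2_0(\Spin_n) \to H^4(B\Spin_n;\Z)$, so that $\frac{1}{2}p_1$ determines a well-defined pair in $K^4(\Spin_n;\Z)$ — which is exactly why we are entitled to write the class simply as $\frac{1}{2}p_1$ rather than as a pair $(\lambda,u)$. With these identifications the forms $CS(\theta_0,\theta_1;\frac{1}{2}p_1) \in \Omega^3(X)$ and $\alpha(\theta_0,\theta_1;\frac{1}{2}p_1) \in \Omega^2(P)$ are precisely the instances, in degrees $2k-1 = 3$ and $2k-2 = 2$, of the forms constructed in Section~\ref{subsec:depend_conn_1}.

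Then I would simply invoke Proposition~\ref{prop:depend_conn_diff_triv}: given a differential String class $\widehat q$ on $(P,\theta_0)$ with differential form $\varrho$, the proposition asserts that $\widehat q - \iota(\alpha(\theta_0,\theta_1;\frac{1}{2}p_1))$ is a differential $\frac{1}{2}p_1$-trivialization — that is, by Definition~\ref{def:diff_string}, a differential String class — on $(P,\theta_1)$ with differential form $\varrho + CS(\theta_0,\theta_1;\frac{1}{2}p_1)$. This is exactly the assertion of the corollary. There is no genuine obstacle here, since all the work was carried out in establishing the general dependence-upon-the-connection formula \eqref{eq:CCS_two_conn} for Cheeger-Chern-Simons characters and in the proof of Proposition~\ref{prop:depend_conn_diff_triv}; the only point meriting explicit mention is the identification of the relevant invariant polynomial, which is precisely the observation that $K^4(\Spin_n;\Z) \cong \Z$ is generated by $\frac{1}{2}p_1$.
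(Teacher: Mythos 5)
Your proposal is correct and matches the paper's intent exactly: the corollary is stated as an immediate consequence of Proposition~\ref{prop:depend_conn_diff_triv} specialized to $G=\Spin_n$ and $u=\frac{1}{2}p_1$, using the identification $K^4(\Spin_n;\Z)\cong\Z$ generated by $\frac{1}{2}p_1$, and the paper offers no further argument beyond this observation.
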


The transgression map $T:H^4(B\Spin_n;\Z) \to H^3(\Spin_n;\Z)$ is an isomorphism.
The restriction of the Chern-Simons form $CS_\theta(\frac{1}{2}p_1)$ to any fiber $P_x$ represents the class $T(\frac{1}{2}p_1) \in H^3(\Spin_n;\Z)$ in de Rham cohomology.
The pull-back of the Chern-Simons form to any fiber $i_{P_x}^*CS_\theta(\frac{1}{2}p_1) \in \Omega^3(\Spin_n)$ can be expressed purely in terms of the Maurer-Cartan form of $\Spin_n$, see \cite{CS74}.
In particular, it does not dependend upon the connection $\theta$ on $P$.

Since $b_2(\Spin_n)=0$, by Remark~\ref{rem:T_hat}, $\widehat T(\widehat{CW}_\theta(\frac{1}{2}p_1))$ is the unique differential character in $\widehat H^3(\Spin_n;\Z)$ with curvature $i_{P_x}^*CS_\theta(\frac{1}{2}p_1)$ and characteristic class $T(\frac{1}{2}p_1)$.
We call $T(\widehat{CW}_\theta(\frac{1}{2}p_1))$ the \emph{basic} $3$-character on $\Spin_n$, since it coincides with the stable isomorphism class of the so-called \emph{basic} gerbe on $\Spin_n$, see \cite{M03}.

Next we obtain equivalent characterizations for a character $\widehat q \in \widehat H^3(P;\Z)$ to be a differential String class:
\begin{prop}\label{prop:diff_string}
Let $\pi:(P,\theta) \to X$ be a principal $\Spin_n$-bundle with connection.
Let $\widehat q \in \widehat H^3(P;\Z)$ and $\varrho \in \Omega^3(X)$.
Then the following conditions are equivalent:
\begin{itemize}
\item[(i)] The characteristic class $c(\widehat q) \in H^3(P;\Z)$ is a String class and the curvature satisfies $\curv(\widehat q) = CS_\theta(\frac{1}{2} p_1) - \pi^*\varrho$.
\item[(ii)] The curvature satisfies $\curv(\widehat q) = CS(\frac{1}{2} p_1) - \pi^*\varrho$ and for any $x \in X$, we have $i_{P_x}^*\widehat q = \widehat T(\widehat{CW}_\Theta(\frac{1}{2} p_1))$.
\item[(iii)] The character $\widehat q$ is a differential String class with differential form $\varrho$.
\end{itemize}
\end{prop}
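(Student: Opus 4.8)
The plan is to prove the cyclic chain of implications (iii)$\Rightarrow$(i), (iii)$\Rightarrow$(ii), (ii)$\Rightarrow$(i) and (i)$\Rightarrow$(iii), which together give (i)$\Leftrightarrow$(ii)$\Leftrightarrow$(iii). Of these, only the final implication carries genuine content: the first three follow by merely reading off data from results already in hand, and the whole point is that the comparatively weak hypotheses of (i) already force $\widehat q$ to be a bona fide differential String class. This is precisely the phenomenon that \emph{fails} for a general universal class (compare the two remarks following Proposition~\ref{prop:diff_triv}), so the argument must invoke the special cohomological structure of $\Spin_n$.

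For the easy directions I would argue as follows. Since a differential String class is by Definition~\ref{def:diff_string} a differential $\tfrac12 p_1$-trivialization, the implications (iii)$\Rightarrow$(i) and (iii)$\Rightarrow$(ii) are read off from \eqref{eq:curv_String} and \eqref{eq:transg_String} together with Corollary~\ref{cor:String_prop}, which already records that $\curv(\widehat q)=CS_\theta(\tfrac12 p_1)-\pi^*\varrho$, that $i_{P_x}^*\widehat q=\widehat T(\widehat{CW}_\Theta(\tfrac12 p_1))$, and that $c(\widehat q)$ is a String class. For (ii)$\Rightarrow$(i) the curvature hypothesis is literally identical in the two conditions, so it only remains to supply the characteristic-class hypothesis of (i): applying $c$ to the fiberwise identity $i_{P_x}^*\widehat q=\widehat T(\widehat{CW}_\Theta(\tfrac12 p_1))$ and using \eqref{eq:cT} yields $i_{P_x}^*c(\widehat q)=T(\tfrac12 p_1)$ for every $x$, which is exactly the defining property \eqref{eq:def_triv} of a String class.

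The heart of the matter is (i)$\Rightarrow$(iii). From $\curv(\widehat q)=CS_\theta(\tfrac12 p_1)-\pi^*\varrho$, applying $d$ and using injectivity of $\pi^*$ on forms gives $d\varrho=CW_\theta(\tfrac12 p_1)$; moreover, $c(\widehat q)$ being a String class forces $\tfrac12 p_1(P)=0$, so $P$ is String. Corollary~\ref{cor:String_prop} then furnishes a genuine differential String class $\widehat q_0\in\widehat H^3(P;\Z)$ with the \emph{same} characteristic class $c(\widehat q_0)=c(\widehat q)$ and the \emph{same} differential form $\varrho$. By part~(iv) of Proposition~\ref{prop:diff_triv} applied to $\widehat q_0$ we have $\curv(\widehat q_0)=CS_\theta(\tfrac12 p_1)-\pi^*\varrho=\curv(\widehat q)$, so $\widehat q-\widehat q_0$ has vanishing curvature and vanishing characteristic class; that is, it is a flat character belonging to the torus $\tfrac{H^2(P;\R)}{H^2(P;\Z)_\R}\subset\widehat H^3(P;\Z)$.

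The single place where the geometry of $\Spin_n$ enters — and the step I expect to be the main obstacle — is identifying this flat remainder as a pull-back from the base. Since $\widetilde H^i(\Spin_n;\Z)=0$ for $i\le 2$, the edge homomorphism of the Leray-Serre spectral sequence (the low-degree analogue of \eqref{eq:Serre_sequ}) makes $\pi^*\colon H^2(X;\R)\to H^2(P;\R)$ an isomorphism; naturality of the coefficient map $H^2(\,\cdot\,;\R)\to H^2(\,\cdot\,;\Ul)$ and of the inclusion of flat characters $j$ then lets me write $\widehat q-\widehat q_0=\pi^*h$ for a flat character $h\in\widehat H^3(X;\Z)$. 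Finally, the torsor property of Corollary~\ref{cor:String_torsor} shows that $\widehat q=\widehat q_0+\pi^*h$ is again a differential String class, completing (i)$\Rightarrow$(iii) and hence the full equivalence. It is exactly the isomorphism $\pi^*\colon H^2(X)\xrightarrow{\cong}H^2(P)$ that is unavailable in general degree, which is why the weaker conditions suffice here but not for arbitrary $u$.
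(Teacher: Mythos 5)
Your proof is correct, but it is organized around a different cycle of implications than the paper's, and the "hard" step is placed differently. The paper proves (i)$\Rightarrow$(ii)$\Rightarrow$(iii)$\Rightarrow$(i), so its essential content is (ii)$\Rightarrow$(iii): from the fiberwise identity $i_{P_x}^*\widehat q=\widehat T(\widehat{CW}_\Theta(\tfrac12 p_1))$ it produces a comparison differential String class $\widehat q'$ with the same characteristic class but possibly different form $\varrho'$, shows $\varrho-\varrho'=d\eta$ is exact, identifies the remaining flat ambiguity as $\iota(\pi^*\nu)$ for a closed $\nu\in\Omega^2(X)$, and then verifies \eqref{eq:def_q_hat_Spin} by an explicit computation of $-\ti_\pi(\widehat q)$. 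You instead make (i)$\Rightarrow$(iii) the essential step and reach (ii) by the easy observation that applying $c$ to the fiberwise identity recovers the String-class condition via \eqref{eq:cT}. Your comparison character $\widehat q_0$ is normalized to have the same form $\varrho$ from the outset, so $\widehat q-\widehat q_0$ is honestly flat with trivial class, and you identify it as $\pi^*h$ through the flat torus $H^2(P;\R)/H^2(P;\Z)_\R$ rather than through closed $2$-forms --- the same special input ($\widetilde H^{\le 2}(\Spin_n;\Z)=0$, hence $\pi^*\colon H^2(X;\R)\xrightarrow{\cong}H^2(P;\R)$) that the paper uses, just packaged differently; you then conclude by the torsor/action statement instead of recomputing $\ti_\pi$. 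Both routes deliver the same result; yours makes it more transparent that (i) is already the minimal hypothesis. One caveat: you lean on Corollary~\ref{cor:String_prop} to prescribe $c(\widehat q_0)$ and $\varrho$ \emph{simultaneously}, which is only possible when $[CS_\theta(\tfrac12 p_1)-\pi^*\varrho]_{dR}=c(\widehat q)_\R$; in your situation this compatibility is automatic because $(\curv,c)(\widehat q)\in R^3(P;\Z)$, but it is worth stating explicitly, since the corollary as written does not record that condition.
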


\begin{proof}
Before we prove the claim let us recall that $b_1(\Spin_n) = b_2(\Spin_n) =0$ implies that degree $3$ characters on $\Spin_n$ are uniquely determined by their characteristic class and curvature.
Moreover, the bundle projection provides an isomorphism $\pi^*:H^2(X;\R) \to H^2(P;\R)$.
Thus differential characters in $\widehat H^3(P;\Z)$ are uniquely determined by their curvature and characteristic class up to characters of the form $\pi^*\iota(\nu)$ for some closed form $\nu \in \Omega^2(X)$.

We first prove the implication (i) $\implies$ (ii):
By the curvature condition, we have $\curv(i_{P_x}^*\widehat q) = i_{P_x}^*CS_\theta(\frac{1}{2}p_1) = \curv(\widehat T(\widehat{CW}_\Theta(\frac{1}{2}p_1)))$.
By assumption, $c(\widehat q)$ is a String class, thus $c(i_{P_x}^*(\widehat q)) = T(\frac{1}{2}p_1) = \curv(\widehat T(\widehat{CW}_\Theta(\frac{1}{2}p_1)))$.
Hence $i_{P_x}^*\widehat q = \widehat T(\widehat{CW}_\Theta(\frac{1}{2}p_1))$.

Now we prove the implication (ii) $\implies$ (iii):
Since $i_{P_x}^*\widehat q = \widehat T(\widehat{CW}_\theta(\frac{1}{2}p_1))$, the characteristic class $c(\widehat q)$ is a String class.
By Lemma~\ref{lem:q_hat_varrho} there exists a differential String class $\widehat q' \in \widehat H^3(P;\Z)$ with characteristic class $c(\widehat q)$.
Let $\varrho' \in \Omega^3(X)$ be the differential form of the differential String class $\widehat q'$.
By \eqref{eq:curv_String} for $\widehat q'$ and the curvature condition for $\widehat q$, we have $\curv(\widehat q) - \curv(\widehat q') = \pi^*(\varrho-\varrho')$.
Since the pull-back $\pi^*:\Omega^*(X) \to \Omega^*(P)$ is injective, the form $\varrho-\varrho'$ is closed.
On the other hand, $\curv(\widehat q) - \curv(\widehat q')$ is exact, since both forms represent the cohomology class $c(\widehat q)_\R$. 
By the exact sequence \eqref{eq:Serre_sequ} the pull-back $\pi^*:H^3(X;\Z) \to H^3(P;\Z)$ is injective.
Thus we find a form $\eta \in \Omega^2(X)$ such that $\varrho-\varrho' = d\eta$.

Put $\widehat q'':= \widehat q' + \iota(\pi^*\eta)$.
Then we have $\curv(\widehat q'') = \curv(\widehat q') + d\pi^*\eta = \curv(\widehat q)$ and $c(\widehat q'')=c(\widehat q')=c(\widehat q)$.
Thus there exists a closed form $\nu \in \Omega^2(X)$ such that $\widehat q=\widehat q'' + \iota(\pi^*\nu) = \widehat q' + \iota(\pi^*(\eta+\nu))$.
Since $\widehat q'$ is a differential String class, we have:
\allowdisplaybreaks{
\begin{align*}
-\ti_\pi(\widehat q) 
&= 
-\ti_\pi(\widehat q' + \iota(\pi^*(\eta+\nu))) \\
&= 
\widehat{CCS}_\theta(\frac{1}{2}p_1) - \iota_\pi(\varrho',0) + \iota_\pi(0,\pi^*(\eta + \nu)) \\
&=
\widehat{CCS}_\theta(\frac{1}{2}p_1) - \iota_\pi(\varrho'-d\eta,0) + \iota_\pi(d_\pi(\eta+\nu,0)) \\
&=
\widehat{CCS}_\theta(\frac{1}{2}p_1) - \iota_\pi(\varrho,0) . 
\end{align*}
}
Thus $\widehat q$ is a differential String class with differential form $\varrho$.

Finally, the implication (iii) $\implies$ (i) follows from \eqref{eq:curv_String} and Corollary~\ref{cor:String_prop}.
\end{proof}

\subsection{Related concepts}
In this section we relate differential String classes as in Definition~\ref{def:diff_string} to canonical trivializations of the class $\frac{1}{2}p_1 \in H^4(X;\Z)$ \cite{R11} and to geometric String structures \cite{W13}.
The former are certain $3$-forms on a compact Riemannian manifold $(X,g)$, while the latter are certain bundle gerbes with connection on $P$.
We show that the stable isomorphism class of a geometric String structure is a differential String class and vice versa, any differential String class is the stable isomorphism class of a geometric String structure.

Another approach to differential (twisted) String structures has appeared in \cite{SSS08}, where the authors describe those structures in terms of $\infty$-connections on $G$-principal $\infty$-bundles.
Their approach also applies to higher lifts in the Whitehead tower of $B\OO_n$, which account for higher geometric structures, described in homotopy theoretic terms.
The authors also discuss how these higher geometric structures are related to anomaly cancellation in String theory.
As the lift in the Whitehead tower from $B\Spin_n$ to $B\String_n$ accounts for anomaly cancellation of supersymmetric Strings on the target space $X$, the next lift is related to anomaly cancellation for fivebranes on $X$. 
That's why the geometric structure corresponding to such a lift is called a \emph{fivebrane structure} in \cite{SSS08}.

\subsubsection{Canonical differential refinements}
Let $(X,g)$ be a compact Riemannian manifold and $q \in H^3(P;\Z)$ a fixed String class.
Let $\varrho_0$ be the unique co-exact $3$-form satisfying $d\varrho_0 = CW_\theta(\frac{1}{2}p_1)$.
Denote by $\mathcal H^3_g(X)$ the space of harmonic $3$-forms on $X$ with respect to the metric $g$.
By \cite[Thm.~3.7]{R11}, there is a unique harmonic form with integral periods $\varrho \in \mathcal H^3_g(X;\Z)$ such that 
\begin{equation}\label{eq:CS_q}
H^3_{dR}(P;\Z) \ni \big[CS_\theta(\frac{1}{2}p_1) - \pi^*(\varrho_0+\varrho)\big]_{dR} = q_\R \in H^3(P;\Z)_\R. 
\end{equation}
Thus the form $CS_\theta(\frac{1}{2}p_1) - \pi^*(\varrho_0+\varrho)$ is a canonical representative of the String class $q$ in de Rham cohomology.
In particular, it has integral periods.
Taking differential characters on $P$ with characteristic class a String class and curvature the associated Redden form $CS_\theta(\frac{1}{2}p_1) - \pi^*(\varrho_0+\varrho)$ yields another equivalent characterization of differential String classes:

\begin{prop}
Let $(X,g)$ be a compact Riemannian manifold and $\pi:(P,\theta) \to X$ a principal $\Spin_n$-bundle with connection, for $n \geq 3$.
Let $q \in H^3(P;\Z)$ be a fixed String class.
Let $\varrho_0$ be the unique co-exact $3$-form satisfying $d\varrho_0 = CW_\theta(\frac{1}{2}p_1)$.
Let $\varrho \in \mathcal H^3_g(X;\Z)$ such that $CS_\theta(\frac{1}{2}p_1) - \pi^*(\varrho_0+\varrho)$ represents $q_\R$.
Then for a character $\widehat q \in \widehat H^3(P;\Z)$, the following are equivalent:
\begin{enumerate}
\item[(i)] The character $\widehat q$ has characteristic class $c(\widehat q)=q$ and curvature $\curv(\widehat q) = CS_\theta(\lambda) - \pi^*(\varrho_0+\varrho)$.
\item[(ii)] The character $\widehat q$ is a differential String class with differential form $\varrho_0+\varrho$ and $c(\widehat q)=q$. 
\end{enumerate}
\end{prop}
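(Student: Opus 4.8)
The plan is to deduce the equivalence directly from Proposition~\ref{prop:diff_string}, applied with the form $\varrho_0+\varrho$ playing the role of the differential form denoted $\varrho$ there. First I would verify that $\varrho_0+\varrho$ is an admissible differential-String-class datum: by construction $d\varrho_0 = CW_\theta(\tfrac{1}{2}p_1)$, and $\varrho \in \mathcal H^3_g(X;\Z)$ is harmonic, hence closed, so $d(\varrho_0+\varrho) = CW_\theta(\tfrac{1}{2}p_1)$. In particular the candidate curvature $CS_\theta(\tfrac{1}{2}p_1) - \pi^*(\varrho_0+\varrho)$ is closed, and by Redden's result \eqref{eq:CS_q} it has integral periods and represents $q_\R$, so that characters $\widehat q \in \widehat H^3(P;\Z)$ with this curvature and characteristic class $q$ actually exist; this guarantees that the two conditions describe a nonempty set of characters. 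Note also that in this specialization $\lambda = \tfrac{1}{2}p_1$, so the form $CS_\theta(\lambda)$ appearing in condition (i) is the Chern-Simons $3$-form $CS_\theta(\tfrac{1}{2}p_1)$.

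For the implication (i) $\Rightarrow$ (ii) I would argue as follows. Condition (i) asserts $c(\widehat q)=q$ together with $\curv(\widehat q) = CS_\theta(\tfrac{1}{2}p_1) - \pi^*(\varrho_0+\varrho)$. Since $q$ is a String class by hypothesis, this is precisely condition (i) of Proposition~\ref{prop:diff_string} read with differential form $\varrho_0+\varrho$. The equivalence (i) $\Leftrightarrow$ (iii) established there then shows that $\widehat q$ is a differential String class with differential form $\varrho_0+\varrho$, and retaining the assumption $c(\widehat q)=q$ yields condition (ii).

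The converse (ii) $\Rightarrow$ (i) is the same reduction read backwards. Condition (ii) is exactly condition (iii) of Proposition~\ref{prop:diff_string} (with $\varrho_0+\varrho$) supplemented by $c(\widehat q)=q$. The implication (iii) $\Rightarrow$ (i) there, which rests on the curvature identity \eqref{eq:curv_String} and Corollary~\ref{cor:String_prop}, returns $\curv(\widehat q) = CS_\theta(\tfrac{1}{2}p_1) - \pi^*(\varrho_0+\varrho)$ and the fact that $c(\widehat q)$ is a String class; combined with the assumed $c(\widehat q)=q$, this is precisely condition (i).

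I do not expect a genuine obstacle here: the proposition is essentially a corollary of Proposition~\ref{prop:diff_string}, its actual content being the identification of Redden's canonical representative $CS_\theta(\tfrac{1}{2}p_1) - \pi^*(\varrho_0+\varrho)$ as the curvature of a differential String class with differential form $\varrho_0+\varrho$. The only steps requiring care are the elementary check $d(\varrho_0+\varrho)=CW_\theta(\tfrac{1}{2}p_1)$ and the invocation of \eqref{eq:CS_q} to see that the two conditions are jointly realizable; both are immediate once the harmonicity of $\varrho$ and Redden's theorem are in hand.
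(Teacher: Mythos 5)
Your proposal is correct and follows essentially the same route as the paper: the implication (i) $\Rightarrow$ (ii) is deduced from Proposition~\ref{prop:diff_string} applied with the form $\varrho_0+\varrho$, and (ii) $\Rightarrow$ (i) from the curvature identity \eqref{eq:curv_String}. Your preliminary checks that $d(\varrho_0+\varrho)=CW_\theta(\tfrac{1}{2}p_1)$ and that such characters exist via \eqref{eq:CS_q} are consistent with, and slightly more explicit than, the paper's two-line argument.
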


\begin{proof}
The implication (i) $\implies$ (ii) follows from Proposition~\ref{prop:diff_string}.
The implication (iI) $\implies$ (i) follows from ~\eqref{eq:curv_String}.
\end{proof}

Since the $3$-form $\varrho_0$ is a canonical trivialization of $\frac{1}{2}p_1(P) \in H^4(X;\Z)$, one might consider any differential character $\widehat q \in \widehat H^3(P;\Z)$ with characteristic class a String class $q$ and curvature the associated Redden form $CS_\theta(\frac{1}{2}p_1) - \pi^*(\varrho_0+\varrho)$ as a canonical differential refinement of the String class $q$.
In this sense our notion of differential String classes recovers the notion of canonical differential refinements of String classes that is implicit in \cite{R11}. 

\subsubsection{Geometric String structures}\label{subsubsec:geom_string}
A geometric String structure in the sense of \cite{W13} is a certain bundle gerbe with connection on $P$.
More precisely, it is a trivialization of the so-called Chern-Simons bundle $2$-gerbe $\mathcal{C\!S}$ with compatible connection.
In order to relate differential String classes in the sense of Definition~\ref{def:diff_string}, we first relate Cheeger-Chern-Simons characters to the Chern-Simons bundle $2$-gerbe:

\begin{prop}
Let $\pi:(P;\theta) \to X$ be a principal $\Spin_n$-bundle with connection, and $n \geq 3$.
Let $\mathcal{C\!S}$ be the bundle $2$-gerbe on $X$, defined over the submersion $\pi:P \to X$, as in \cite{CJMSW05,W13}.
Let $h_{\mathcal{C\!S}} \in \widehat H^3(\pi;\Z)$ be the relative character associated with $\mathcal{C\!S}$, as in Appendix~\ref{app:bundle_gerbes}.
Then we have $h_\mathcal{C\!S} = \widehat{CCS}_\theta(\frac{1}{2}p_1)$. 
\end{prop}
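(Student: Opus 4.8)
The plan is to show that the relative character $h_{\mathcal{C\!S}} \in \widehat H^4(\pi;\Z)$ satisfies the three properties that characterize $\widehat{CCS}_\theta(\frac{1}{2}p_1)$ in Theorem~\ref{thm:CCS}, and then to invoke the uniqueness asserted there. Recall that Theorem~\ref{thm:CCS} identifies $\widehat{CCS}_\theta(\frac{1}{2}p_1)$ as the unique \emph{natural} relative character whose curvature and covariant derivative are $(CW_\theta(\frac{1}{2}p_1),CS_\theta(\frac{1}{2}p_1))$, as in \eqref{eq:CCS_comm_1}, and whose image under $\vds_\pi$ is the Cheeger-Simons character $\widehat{CW}_\theta(\frac{1}{2}p_1)$, as in \eqref{eq:CCS_comm_2}; the naturality \eqref{eq:CCS_comm_3} is what lets uniqueness bite, by reducing the comparison to the universal bundle (or to a finite-dimensional $n$-classifying bundle, which suffices in degree $4$). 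Thus it is enough to verify \eqref{eq:CCS_comm_1}, \eqref{eq:CCS_comm_2} and \eqref{eq:CCS_comm_3} for $h_{\mathcal{C\!S}}$.

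First I would read off the curvature and covariant derivative of $h_{\mathcal{C\!S}}$ from the general correspondence between bundle $2$-gerbes with connection and relative characters recalled in Appendix~\ref{app:bundle_gerbes} (see also \cite{BB13}): the curvature of the relative character attached to a bundle $2$-gerbe over the submersion $\pi:P\to X$ is the $4$-curvature of the $2$-gerbe, and its covariant derivative is the curving $B\in\Omega^3(P)$. By the construction of the Chern-Simons bundle $2$-gerbe in \cite{CJMSW05,W13}, the $4$-curvature is the Chern-Weil (Pontryagin) form $CW_\theta(\frac{1}{2}p_1)$ and the curving is the Chern-Simons $3$-form $CS_\theta(\frac{1}{2}p_1)$. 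This gives \eqref{eq:CCS_comm_1} for $h_{\mathcal{C\!S}}$; the only real work here is matching the orientation and sign conventions of the two presentations of the Chern-Simons form.

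Next I would identify $\vds_\pi(h_{\mathcal{C\!S}})$. By the same correspondence, $\vds_\pi(h_{\mathcal{C\!S}})$ is the stable isomorphism class of $\mathcal{C\!S}$ in $\widehat H^4(X;\Z)$; its characteristic class is $\frac{1}{2}p_1(P)=c(\widehat{CW}_\theta(\frac{1}{2}p_1))$ and its curvature is $CW_\theta(\frac{1}{2}p_1)=\curv(\widehat{CW}_\theta(\frac{1}{2}p_1))$. On a general base these two invariants do not determine an absolute character, because of the torus of flat characters, but the assignment $(P,\theta)\mapsto\vds_\pi(h_{\mathcal{C\!S}})$ is natural in the bundle with connection, naturality and choice-independence of $\mathcal{C\!S}$ and of its stable class being established in \cite{W13}. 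Since the Cheeger-Simons construction is, by \cite{CS83}, the unique natural lift of $CW_\theta\times cl_\Z$ (cf.\ diagram~\eqref{eq:diag_def_CWhat}), and naturality forces the comparison back to $B\Spin_n$, where $H^3(B\Spin_n;\R)=0$ makes $(\curv,c)$ injective, we conclude $\vds_\pi(h_{\mathcal{C\!S}})=\widehat{CW}_\theta(\frac{1}{2}p_1)$, which is \eqref{eq:CCS_comm_2}. Together with the naturality of $h_{\mathcal{C\!S}}$, the uniqueness in Theorem~\ref{thm:CCS} then yields $h_{\mathcal{C\!S}}=\widehat{CCS}_\theta(\frac{1}{2}p_1)$.

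The main obstacle I expect is the identification in the second step: reconciling the differential-geometric data of the Chern-Simons bundle $2$-gerbe of \cite{CJMSW05} --- built from the basic gerbe on $\Spin_n$, the fiber products $P^{[\bullet]}$, and auxiliary connection and curving choices --- with the intrinsic forms $CW_\theta(\frac{1}{2}p_1)$ and $CS_\theta(\frac{1}{2}p_1)$, together with the check that the resulting relative character in $\widehat H^4(\pi;\Z)$ is independent of all auxiliary choices and natural in $\theta$. I would handle this by leaning on Waldorf's results \cite{W13} that $\mathcal{C\!S}$ is well defined up to canonical stable isomorphism and natural in the connection, so that the induced relative character inherits well-definedness and naturality; what then remains is purely the convention-matching of the $3$- and $4$-form data, after which the two uniqueness statements, Theorem~\ref{thm:CCS} and \cite{CS83}, do the rest.
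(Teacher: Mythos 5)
Your proposal is correct and follows essentially the same route as the paper: reduce by naturality to the universal bundle with universal connection, match the curvature, covariant derivative and characteristic class of $h_{\mathcal{C\!S}}$ with those of $\widehat{CCS}_\Theta(\frac{1}{2}p_1)$, and conclude from the vanishing of $H^3(B\Spin_n;\R)\cong H^3(\pi_{E\Spin_n};\R)$ that $(\curv,\cov,c)$ is injective there. Your detour through $\vds_\pi(h_{\mathcal{C\!S}})=\widehat{CW}_\theta(\frac{1}{2}p_1)$ and the uniqueness clause of Theorem~\ref{thm:CCS} is just a repackaging of the same injectivity argument the paper applies directly.
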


\begin{proof}
Both the Chern-Simons bundle $2$-gerbe $\mathcal{C\!S}$ and the Cheeger-Simons and Cheeger-Chern-Simons characters $\widehat{CW}_\theta(\frac{1}{2}p_1)$, $\widehat{CCS}_\theta(\frac{1}{2}p_1)$ are natural with respect to connection preserving bundle maps.
Thus it suffices to compare them on a universal principal $\Spin_n$-bundle $\pi_{E\Spin_n}:E\Spin_n \to B\Spin_n$ with fixed universal connection $\Theta$.
The curvature of the character $h_{\mathcal{C\!S}}$ equals the $4$-form curvature $H \in \Omega^4(X)$ of the bundle $2$-gerbe $\mathcal{C\!S}$, which in turn equals the Chern-Weil-form $CW_\Theta(\frac{1}{2}p_1) = \curv(\widehat{CCS}_\Theta(\frac{1}{2}p_1))$.
Similarly, the covariant derivative of the character $h_{\mathcal{C\!S}}$ equals the curving $B \in \Omega^3(P)$ of the bundle $2$-gerbe $\mathcal{C\!S}$, which equals the Chern-Simons-form $CS_\Theta(\frac{1}{2}p_1) = \cov(\widehat{CCS}_\Theta(\frac{1}{2}p_1))$.
The characteristic class of $h_{\mathcal{C\!S}}$ maps under the isomorphism $H^4(\pi_{E\Spin_n};\Z) \to H^4(B\Spin_n;\Z)$ to the characteristic class of $\mathcal{C\!S}$, which equals $\frac{1}{2}p_1 \in H^4(B\Spin_n;\Z)$. 
Thus the relative characters $h_\mathcal{C\!S} , \widehat{CCS}_\Theta(\frac{1}{2}p_1) \in \widehat H^4(\pi_{E\Spin_n};\Z)$ have the same curvature, covariant derivative and characteristic class.
Since $H^3(\pi_{E\Spin_n};\R) \cong H^3(B\Spin_n;\R) = \{0\}$, we conclude $h_\mathcal{C\!S} = \widehat{CCS}_\theta(\frac{1}{2}p_1)$.
\end{proof}

A geometric String structure on $(P,\theta)$ is by definition a trivialization of the Cheeger-Simons bundle $2$-gerbe $\mathcal{C\!S}$ together with a compatible connection.
In particular, it is a bundle gerbe with connection $\mathcal S$ on $P$.
Its Dixmier-Douady class is a String class.
Moreover, there is a uniquely determined $3$-form $\varrho \in \Omega^3(X)$ such that $\curv(\mathcal G) = CS_\theta(\frac{1}{2} p_1) - \pi^*\varrho$.
Thus we obtain a 1-1 correspondence between differential String classes and isomorphism classes of geometric String structures.
In particular, the notion of differential String classes from Definition~\ref{def:diff_string} is compatible with the one in \cite{W14}.

\begin{prop}
Let $\pi:(P;\theta) \to X$ be a principal $\Spin_n$-bundle with connection, and $n \geq 3$.
The the following holds:
\begin{enumerate}
\item[(i)] If  $\mathcal S$ is a geometric String structure with differential form $\varrho$, then its isomorphism class $[\mathcal S] \in \widehat H^3(P;\Z)$ is a differential String class. 
Their differential forms coincide.
\item[(ii)] For any differential String class $\widehat q \in \widehat H^3(P;\Z)$ with differential form $\varrho$, there exists a geometric String structure $\mathcal S$ such that $[\mathcal S] = \widehat q$.  
\end{enumerate}
In other words, the set of isomorphism classes of geometric String structures on $(P,\theta)$ coincides with the set of differential String classes on $(P,\theta)$.
\end{prop}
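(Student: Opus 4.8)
The plan is to pass everything into relative differential cohomology and combine two inputs: the identity $h_{\mathcal{C\!S}} = \widehat{CCS}_\theta(\frac{1}{2}p_1)$ established in the preceding proposition, and the description of bundle gerbes and bundle $2$-gerbes with connection as relative differential characters from Appendix~\ref{app:bundle_gerbes}. Recall from there that a bundle gerbe with connection on $P$ has a stable isomorphism class in $\widehat H^3(P;\Z)$, whose characteristic class is the Dixmier-Douady class and whose curvature is the $3$-curvature, and that $\widehat H^3(P;\Z)$ classifies bundle gerbes with connection on $P$ up to stable isomorphism. A geometric String structure $\mathcal S$ is in particular a bundle gerbe with connection on $P$, so its isomorphism class $[\mathcal S]$ is a well-defined element of $\widehat H^3(P;\Z)$.

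The crucial and substantive step is to encode the trivialization data in differential cohomology. Concretely, I claim that a bundle gerbe with connection $\mathcal S$ on $P$ underlies a trivialization of the Chern-Simons bundle $2$-gerbe $\mathcal{C\!S}$ with compatible connection if and only if there is a form $\varrho \in \Omega^3(X)$ --- the form induced by the compatible connection --- such that
$$
-\ti_\pi([\mathcal S]) = h_{\mathcal{C\!S}} - \iota_\pi(\varrho,0),
$$
in which case $\curv([\mathcal S]) = CS_\theta(\frac{1}{2}p_1) - \pi^*\varrho$. This is where the explicit connective structure of $\mathcal{C\!S}$ enters, via the gerbe-to-character dictionary of Appendix~\ref{app:bundle_gerbes} applied both to $\mathcal S$ and to $\mathcal{C\!S}$; it is precisely the compatibility of the connection on the trivialization with that on $\mathcal{C\!S}$ which forces the relative correction term to be $\iota_\pi(\varrho,0)$ with $\varrho$ pulled back from the base, rather than a more general $\iota_\pi(\varrho,\eta)$.

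Granting this, both assertions are short. For (i), substituting $h_{\mathcal{C\!S}} = \widehat{CCS}_\theta(\frac{1}{2}p_1)$ into the displayed identity turns it into condition~\eqref{eq:def_q_hat_Spin} of Definition~\ref{def:diff_string}, so $[\mathcal S]$ is a differential String class, and the curvature identity shows that the differential form $\varrho$ of $\mathcal S$ coincides with that of $[\mathcal S]$. For (ii), I start from a differential String class $\widehat q$ with differential form $\varrho$; by \eqref{eq:def_q_hat_Spin} and the preceding proposition one has $-\ti_\pi(\widehat q) = h_{\mathcal{C\!S}} - \iota_\pi(\varrho,0)$, and by \eqref{eq:curv_String} its curvature is $CS_\theta(\frac{1}{2}p_1) - \pi^*\varrho$. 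Choosing, via the classification recalled above, a bundle gerbe with connection $\mathcal S$ on $P$ with $[\mathcal S] = \widehat q$, the same displayed identity read through the dictionary in the other direction exhibits $\mathcal S$ as a trivialization of $\mathcal{C\!S}$ with compatible connection, so that $[\mathcal S] = \widehat q$ as desired.

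The main obstacle is establishing the equivalence of the second paragraph, i.e.\ matching Waldorf's gerbe-theoretic notion of a trivialization-with-compatible-connection against the relative-character relation, and in particular showing that the compatibility condition is exactly what produces the base form $\iota_\pi(\varrho,0)$. That this compatibility cannot be weakened is shown by Remark~\ref{rem:weak} and the remark following Proposition~\ref{prop:torsor}: the curvature condition \eqref{eq:curv_String} and the fiberwise condition \eqref{eq:transg_String} together are strictly weaker than the defining relation \eqref{eq:def_q_hat_Spin}. Hence the full strength of the compatible connection must be used, and carrying out this verification in terms of the explicit curving and connective structure of $\mathcal{C\!S}$ from \cite{CJMSW05,W13} is where the real work lies.
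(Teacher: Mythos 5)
Your proof hinges on an equivalence you never establish: that a bundle gerbe with connection $\mathcal S$ on $P$ underlies a trivialization of $\mathcal{C\!S}$ with compatible connection if and only if $-\ti_\pi([\mathcal S]) = h_{\mathcal{C\!S}} - \iota_\pi(\varrho,0)$ for some $\varrho \in \Omega^3(X)$. You flag this yourself as ``where the real work lies'' and then defer it, but without it neither (i) nor (ii) is proved; in particular the direction needed for (ii) would require reconstructing compatible connection data on a trivializing gerbe from a relative-character identity, a substantial construction that is nowhere carried out. This is a genuine gap, not a routine verification.

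Moreover, the reason you give for why this hard route is unavoidable is incorrect. You invoke Remark~\ref{rem:weak} and the remark following Proposition~\ref{prop:torsor} to argue that the curvature condition \eqref{eq:curv_String} together with the fiberwise condition \eqref{eq:transg_String} is strictly weaker than the defining relation \eqref{eq:def_q_hat_Spin}. Those remarks concern the general degree-$2k$ situation; in the String case Proposition~\ref{prop:diff_string} proves exactly the opposite: because $b_1(\Spin_n)=b_2(\Spin_n)=0$ and $\pi^*$ is injective on $H^3(\cdot;\Z)$ by the Serre sequence, the condition ``characteristic class a String class and curvature $CS_\theta(\frac{1}{2}p_1)-\pi^*\varrho$'' already implies \eqref{eq:def_q_hat_Spin}. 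The paper's proof exploits precisely this: for (i) one only needs the standard facts that $[\mathcal S]$ has Dixmier--Douady class a String class and curvature $CS_\theta(\frac{1}{2}p_1)-\pi^*\varrho$, and Proposition~\ref{prop:diff_string} does the rest; for (ii) one compares torsors --- the set of isomorphism classes of geometric String structures is a torsor for $\widehat H^3(X;\Z)$ by \cite[Cor.~2.11]{W13}, the set of differential String classes is one by Corollary~\ref{cor:String_torsor}, and by (i) they intersect, hence coincide. No gerbe-theoretic reconstruction is needed. If you want to salvage your approach, you must actually prove the displayed equivalence; as written, the argument is incomplete and its motivating claim contradicts Proposition~\ref{prop:diff_string}.
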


\begin{proof}
The isomorphism class of a geometric String structure on $(P,\theta)$ is a differential cohomology class $[\mathcal S] \in \widehat H^3(P;\Z)$ with characteristic class a String class and curvature form $\curv([\mathcal S]) = CS_\theta(\frac{1}{2}p_1) - \pi^*\varrho$.
Thus by Proposition~\ref{prop:diff_string}, $[\mathcal S]$ is a differential String class with differential form $\varrho$.

By \cite[Cor.~2.11]{W13}, the set of isomorphism classes of geometric String structures on $(P,\theta)$ is a torsor for the differential cohomology group $\widehat H^3(X;\Z)$.
By Corollary~\ref{cor:String_torsor}, the same holds for the space of geometric String structures.
Since these torsors have a nonempty intersection, they coincide.
\end{proof}

\subsection{(Higher) Chern-Simons theories}
In this section, we briefly explain, how differential String classes at level $\ell$ lead to trivializations of a version of extended Chern-Simons theory.\footnote{Note that we do not claim to have a rigorous notion of extended field theory. In the literature, fully extended field theories are expected to be functors from geometrically defined higher categories to algebraically defined higher categories, like topological field theories in the sense of Atiyah are functors from the cobordism category to the category of vector spaces. We only introduce a notion of how to evaluate differential characters on lower dimensional closed manifolds. In a fully extended field theory one would also expect to have evluations on manifolds with cornes, which e.g.~appear as boundaries of higher degree.}
We describe this Chern-Simons theory by its evaluations on closed oriented manifolds of dimension $0,1,2,3$.
These evaluations are constructed by transgression of the Cheeger-Simons character $\widehat{CW}_\theta(\lambda)$.
The trivializations are given by transgression of the differential String class and its differential form.\footnote{There are several ways to construct transgression of differential cohomology to loop spaces and higher mapping spaces. For a nice geometric method, see \cite{BB13}.}

As above, let $\lambda = \ell \cdot \frac{1}{2}p_1 \in H^4(B\Spin_n;\Z) \cong K^4(\Spin_n;\Z) \cong \Z$.
The Cheeger-Simons character $\widehat{CW}_\theta(\lambda) \in \widehat H^4(X;\Z)$  defines a version of Chern-Simons theory at level $\ell$ for the group $\Spin_n$ with target space $X$, extended down to points:
For a point $*$, the mapping space $\mathcal C^\infty(*,X)$ is canonically identified with $X$ itself.
The evaluation of $\widehat{CW}_\theta(\lambda)$ on a point $*$ is the character $\widehat{CW}_\theta(\lambda)$ itself.
Its classical action is given by the $\Ul$-valued holonomy around closed oriented $3$-manifolds mapped to $X$. 
The evaluation of $\widehat{CW}_\theta(\lambda)$ on the closed oriented $1$-manifold $S^1$ is the transgression $\tau(\widehat{CW}_\theta(\lambda)) \in \widehat H^3(C^\infty(S,X);\Z)$ to the free loop space $\LL X$.
Its classical action is the corresponding surface holonomy on the free loop space $\LL X$. 
Similarly, evaluation of $\widehat{CW}_\theta(\lambda)$ on a closed oriented surface $\Sigma^2$ is the transgression $\tau_{\Sigma}(\widehat{CW}_\theta(\lambda)) \in \widehat H^2(C^\infty(\Sigma^2,X);\Z)$ to the space of smooth maps $f:\Sigma^2 \to X$.
It yields an isomorphism class of hermitean line bundles with connection over the mapping space $C^\infty(\Sigma^2,X)$, and the classical action is the holonomy of this bundle around oriented loops in $C^\infty(\Sigma^2,X)$.
The evaluation of $\widehat{CW}_\theta(\lambda)$ on a closed oriented $3$-manifold $M^3$ is the transgression $\tau_{M}(\widehat{CW}_\theta(\lambda)) \in \widehat H^1(C^\infty(M^3,X);\Z) \cong C^\infty(C^\infty(M^3,X),\Ul)$ to the space of smooth maps $f: M^3\to X$.
Its classical action on a point $f \in C^\infty(M^3,X)$ coincides with the evaluation of the three manifold holonomy $\mathrm{hol}_{M^3}(\widehat{CW}_\theta(\lambda)):C^\infty(M^3,X) \to \Ul$. 

As we observed in Section~\ref{sec:Chern-Simons}, replacing the Cheeger-Simons character $\widehat{CW}_\theta(\lambda)$ by the associated Cheeger-Chern-Simons character $\widehat{CCS}_\theta(\lambda)$ allows us to extend the Chern-Simons action to oriented manifolds with boundary, together with sections of the bundle $\pi:P \to X$ along the boundary.
Similarly, we obtain the extended Chern-Simons theory by transgression to loop spaces and higher mapping spaces.\footnote{For transgression of relative differential cohomology, see \cite{BB13}.}

By \cite{ST04}, we expect a notion of geometric String structure on $(P,\theta)$ to provide a trivialization of the corresponding extended Chern-Simons theory.
In the case of the Chern-Simons theory defined by the characters $\widehat{CW}_\theta(\lambda)$, $\widehat{CCS}_\theta(\lambda)$ and its transgressions, a trivialization is given by a differential String class and its differential form, together with their transgressions:
Any differential String class $\widehat q$ on $(P,\theta)$ with differential form $\varrho \in \Omega^3(X)$ provides a real lift of the $3$-manifold holonomy of $\widehat{CW}_\theta(\frac{1}{2}p_1))$.
By Lemma~\ref{lem:q_hat_varrho}, we have:
$$
\mathrm{hol}_M(\widehat{CW}_\theta(\frac{1}{2}p_1)))(f)
:=
\big(f^*\mathrm{hol}_M(\widehat{CW}_\theta(\frac{1}{2}p_1)))\big)[M]
=
\exp \Big( 2\pi i \int_M f^*\varrho \Big).
$$
Since transgression for absolute and relative differential cohomology commutes with the exact sequence \eqref{eq:long_ex_sequ}, transgression of the pair $(\widehat q,\varrho)$ yields differential trivializations of the transgressions of $\widehat{CW}_\theta(\frac{1}{2}p_1)$.
Thus the pair $(\widehat q,\varrho)$ consisting of a differential String class and its associated differential form provides us with a trivialization of the Chern-Simons theory of $\widehat{CW}_\theta(\frac{1}{2}p_1)$, extended down to points. 
Thus for an oriented closed surface $\Sigma^2$ the transgressed differential String class $\tau_\Sigma(\widehat q) \in \widehat H^2(C^\infty(\Sigma,X);\Z)$ satisfies:
$$
-\ti_\pi(\tau_\Sigma(\widehat q))
=
\tau_{\Sigma}(\widehat{CCS}_\theta(\frac{1}{2}p_1)) - \iota_{\pi}(\tau_\Sigma(\varrho),0).
$$
In particular, $\tau_\Sigma(\varrho) \in \Omega^1(C^\infty(\Sigma,X))$ defines an isomorphism class of sections of the line bundle over $C^\infty(\Sigma,X)$ associated with the transgressed Cheeger-Simons character $\tau_\Sigma(\widehat{CW}_\theta(\frac{1}{2}p_1))$.
Likewise, transgression of $\widehat q$ along $S^1$ satisfies
$$
-\ti_\pi(\tau_S(\widehat q))
=
\tau_S(\widehat{CCS}_\theta(\frac{1}{2}p_1)) - \iota_{\pi}(\tau_S(\varrho),0).
$$
Again $\tau_S(\varrho) \in \Omega^2(\mathcal C^\infty(S,X))$ yields a global section of the transgressed character  $\tau_S(\widehat{CW}_\theta(\frac{1}{2}p_1))$. 

Summarizing, we obtain the following table of objects in our version of extended Chern-Simons theory, the components of the classical action, and its trivializations:

\begin{center}
\begin{tabular}{|c|c|c|c|c|}
\hline
mfd. & dim. & Chern-Simons obj. & action &  trivialization \\ \hline
* & 0 & $\widehat{CCS}_\theta(\lambda)$  & $3$-mfd. hol.~on $X$ & $(\widehat q,\varrho)$ \\ \hline
$S^1$ & 1 & $\tau(\widehat{CCS}_\theta(\lambda))$ & surface hol.~on $\LL X$ & $\tau(\widehat q,\varrho)$ \\ \hline
$\Sigma^2$ & 2 & $\tau_\Sigma(\widehat{CCS}_\theta(\lambda))$ & hol.~on $C^\infty(\Sigma,X)$ & $\tau_\Sigma(\widehat q,\varrho)$ \\ \hline
$M^3$ & 3 & $\tau_M(\widehat{CCS}_\theta(\lambda))$ & Chern-Simons action & $\tau_M(\widehat q,\varrho)$ \\ \hline
\end{tabular}
\end{center}

\begin{rem}[Higher order Chern-Simons theories]
In the same way as for differential String classes, we can define trivilizations of higher degree Chern-Simons theories defined by higher degree Cheeger-Chern-Simons characters:
Let $G$ be a Lie group with finitely many components and $\pi:(P,\theta) \to X$ a principal $G$-bundle with connection.
Let $(\lambda,u) \in K^{2k}(G;\Z)$ and $\widehat{CW}_\theta(\lambda,u)$, $\widehat{CCS}_\theta(\lambda,u)$ the associated Cheeger-Simons and Cheeger-Chern-Simons characters.
As explained in Section~\ref{sec:action}, the Cheeger-Simons character defines a classical actions on smooth maps $f:M^{2k-1} \to X$ from a closed oriented $(2k-1)$-manifold $M^{2k-1}$.
Likewise, the Cheeger-Chern-Simons character $\widehat{CCS}_\theta(\lambda,u)$ defines a classical actions on pairs $(f,\sigma)$, consisting of a smooth map $f:M \to X$ from an oriented $(2k-1)$-manifold $M$ with boundary and a section $\sigma: \partial M \to P|_{\partial M}$, see Section~\ref{subsec:CS_action_bound}.
Transgression to loop space and higher mapping spaces yields a higher degree Chern-Simons theory, extended down to points, in the sense explained above.
A differential $u$-trivialization $\widehat q$ with differential form yields a trivialization of the $(2k-1)$-manifold holonomy of $\widehat{CW}_\theta(\lambda,u)$.
Similarly, transgression of the pair $(\widehat q,\varrho)$ to loop space and higher mapping spaces yields a trivialization of the extended higher order Chern-Simons theory.
\end{rem}

\begin{rem}[Dijkgraaf-Witten correspondence]
For a compact Lie group $G$, it is well-known that topological charges of $3$-dimensional Chern-Simons theories correspond to cohomology classes in $H^4(BG;\Z)$.
Likewise, topological charges of the Wess-Zumino model correspond to classes in $H^3(G;\Z)$.
By work of Dijkgraaf and Witten \cite{DW90}, the correspondence between $3$-dimensional Chern Simons theories and Wess-Zumino models is realized by the cohomology transgression $T:H^4(BG;\Z) \to H^3(G;\Z)$, applied to the topological charges.
The differential cohomology transgression $\widehat T: \widehat H^*(BG;\Z) \to H^*(G;\Z)$ from Definition~\ref{def:T_hat} generalizes the Dijkgraaf-Witten correspondence to fully extended Chern-Simons theories of arbitrary degree. 
\end{rem}

\subsection{String geometry}
Let $\pi:P \to X$ be a principal $\Spin_n$-bundle.
Applying the loop space functor yields a principal $\LL \Spin_n$-bundle $\pi:\LL P \to \LL X$.
In analogy to the construction of the spinor bundle on $X$ one aims at a construction of vector bundles associated with the loop group bundle $\LL P \to \LL X$ by nice representations of the loop group $\LL \Spin_n$.
However, the positive energy representations of $\LL \Spin_n$ are all projective, \cite{PS86}.
Therefor, one needs to lift the structure group of the loop bundle $\pi:\LL P \to \LL X$ from $\LL \Spin_n$ to its universal central extension
\begin{equation}\label{eq:centr_ext}
1 \to \Ul \to \widehat{\LL \Spin_n} \to \LL \Spin_n \to 1.
\end{equation}
The obstruction to such lifts is precisely the transgression to loop space of the class $\frac{1}{2}p_1(P) \in H^4(X;\Z)$, \cite{K87}.
Such lifts are sometimes called \emph{Spin structures} on the loop bundle $\pi:\LL P \to \LL X$.
A manifold $X$ is called \emph{String} if it admits a Spin structure $\pi:P \to X$ and $\frac{1}{2}p_1(P)=0$.
More generally, one may call a principal $\Spin_n$-bundle \emph{String} if $\frac{1}{2}p_1(P)=0$.

If $X$ is a String manifold, then it is possible to lift the structure group of the loop bundle $\pi:\LL P \to \LL X$ from $\LL \Spin_n$ to its universal central extension.
In a next step one may want to construct associated vector bundles and interesting operators on sections of those.
However, there remain serious analytical difficulties when dealing with differential operators on the infinite dimensional loop space $\LL X$. 
A famous conjecture of Witten says that the $S^1$-equivariant index of a hypothetical Dirac operator on $\LL X$ should be given by the so-called Witten genus \cite{W88}.
So far, construction of Dirac operators on loop space is far beyond reach, let alone analytical features like the Fredholm property, which are required to have a well-defined index. 
A related conjecture due to H\"ohn and Stolz \cite{S96} (which can be formulated without using those hypothetical Dirac operators) expects the Witten genus on a String manifold to be an obstruction against positive Ricci curvature.

Instead of struggling with the analysis on the free loop space $\LL X$, one may also study the obstruction $\frac{1}{2}p_1(P)$ and its trivializations on the manifold $X$ itself.
This is the program of \emph{String geometry}:
The universal characteristic class $\frac{1}{2}p_1$ generates $H^4(B\Spin_n;\Z) \cong H^3(\Spin_n;\Z) \cong \pi_3(\Spin_n) \cong \Z$.
Therefor, $\frac{1}{2}p_1(P) \in H^4(X;\Z)$ is the obstruction to lift the structure group of a principal $\Spin_n$-bundle $\pi:P \to X$ to its $3$-connected cover $\String_n \to \Spin_n$.

As explained in Section~\ref{subsec:triv}, homotopy classes of lifts of classifying maps 
\begin{equation*}
\xymatrix{
&& \widetilde{BG}_u \ar[d] && \\
X \ar@{.>}[urr]^{\widetilde f} \ar_f[rr] && BG \ar[rr]^{u} && K(\Z;n).  
}
\end{equation*}
give rise to so-called $u$-trivialization classes $q \in H^{n-1}(P;\Z)$.
In the special case of $G = \Spin_n$, $n \geq 3$, and $u = \frac{1}{2}p_1 \in H^4(B\Spin_n;\Z)$, such lifts are called \emph{String structures} (in the homotopy theoretic sense).
By \cite{R11}, there is a 1-1 correspondence between String structures in the homotopy theoretic sense and String classes $q \in H^3(P;\Z)$.
Similarly, one may consider differential String classes on a principal $\Spin_n$-bundle with connection $(P,\theta)$ as isomorphism classes of String structures with additional geometric structure.
As explained in Section~\ref{subsubsec:geom_string}, String classes on $(P,\theta)$ are precisely the stable isomorphism classes of so-called \emph{geometric String structures}.

In the String geometry program one may regard (geometric) String structures on $(P,\theta)\to X$ as replacements of (geometric) Spin structures on the loop bundle $\pi:\LL P \to \LL X$. 
More explicitly, transgression to loop space can be applied not only to the obstruction class $\frac{1}{2}p_1$ but also to its trivializations:
(geometric) String structures on $\pi:P \to X$ are transgressed to (geometric) Spin structures on the loop bundle $\pi:\LL P \to \LL X$.
For more details, see \cite{W12,W14}.

Here we notice that transgression of relative and absolute differential characters fits into that picture:

\begin{prop}
Let $\pi:(P,\theta) \to X$ be a principal $\Spin_n$-bundle and $\widehat q$ a differential String class with differential form $\varrho$.
Then the transgressed character $\tau(\widehat q) \in \widehat H^2(\LL P;\Z)$ represents a line bundle with connection on $\LL P$, which over any fiber of the loop bundle $\pi:\LL P \to \LL X$ represents the universal central extension $\widehat{\LL \Spin_n} \to \LL \Spin_n$.
In other words, transgression to loop space maps differential String classes on $(P,\theta)$ to differential Spin classes on the loop bundle $\LL P \to \LL X$.
\end{prop}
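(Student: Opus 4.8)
The plan is to identify $\tau(\widehat q)$ along the fibers of the loop bundle by combining naturality of transgression to loop space with the fiber restriction formula \eqref{eq:transg_String}, and then to invoke the classical identification of the transgressed basic gerbe on $\Spin_n$ with the universal central extension of $\LL\Spin_n$. First I would observe that, under the standard isomorphism between $\widehat H^2(\LL P;\Z)$ and isomorphism classes of hermitian line bundles with connection, the character $\tau(\widehat q) = \widehat\pi_!(\ev^*\widehat q)\in\widehat H^2(\LL P;\Z)$ is represented by a line bundle with connection on $\LL P$; this yields the first assertion and reduces the statement to the behaviour of this line bundle with connection along the fibers of $\pi:\LL P\to\LL X$.

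Next I would exploit naturality of transgression. For $x\in X$ the fiber of $\LL P\to\LL X$ over the constant loop $\gamma_x$ is $\LL P_x\cong\LL\Spin_n$, included by $\LL i_{P_x}$. Since $i_{P_x}\circ\ev_{P_x} = \ev_P\circ(\LL i_{P_x}\times\id_{S^1})$ and fiber integration is natural \cite{BB13}, transgression commutes with pull-back along $i_{P_x}$, so that $(\LL i_{P_x})^*\tau(\widehat q) = \tau(i_{P_x}^*\widehat q) = \tau\big(\widehat T(\widehat{CW}_\Theta(\tfrac12 p_1))\big)$, the last equality being \eqref{eq:transg_String}. Thus the restriction to this fiber is the loop transgression of the basic $3$-character on $\Spin_n$. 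As a consistency check I would verify curvature and characteristic class directly: for the curvature, \eqref{eq:curv_String} together with $\pi\circ i_{P_x}=\mathrm{const}$ gives $(\LL i_{P_x})^*\curv(\tau(\widehat q)) = \tau(i_{P_x}^*CS_\theta(\tfrac12 p_1))$, the loop transgression of the canonical (Maurer--Cartan) $3$-form on $\Spin_n$, while the characteristic class restricts to the loop transgression of the generator $T(\tfrac12 p_1)\in H^3(\Spin_n;\Z)$.

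The main obstacle is the geometric identification of $\tau\big(\widehat T(\widehat{CW}_\Theta(\tfrac12 p_1))\big)$ with the line bundle with connection underlying the universal central extension $\widehat{\LL\Spin_n}\to\LL\Spin_n$. Here I would invoke the classical result (see \cite{PS86,K87,CP89}, and in the gerbe-theoretic language \cite{W13,W14}) that transgression to the loop group carries the basic gerbe with connection on a compact simple Lie group to the basic central extension of its loop group; its curvature is exactly the left-invariant $2$-form computed above and its Chern class generates $H^2(\LL\Spin_n;\Z)$. Finally, to treat an arbitrary fiber over $\gamma\in\LL X$ --- which is only a torsor for $\LL\Spin_n$ --- I would note that the $\pi^*\varrho$ contribution to $\curv(\tau(\widehat q))$ restricts on the fiber to $\gamma^*\varrho = 0$ for dimensional reasons, so that the restricted curvature is again the canonical left-invariant $2$-form, while the restricted characteristic class is the central extension class by the cited topology. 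Together these identify $\tau(\widehat q)$ with the central extension, with its connection, over every fiber, exhibiting it as a differential Spin class on $\LL P\to\LL X$.
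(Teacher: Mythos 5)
Your proposal is correct and follows essentially the same route as the paper: both arguments restrict $\tau(\widehat q)$ to the fibers of $\LL P\to\LL X$ via naturality of transgression together with the fiber restriction property of differential String classes, and then identify the restricted class with the universal central extension of $\LL\Spin_n$ (the paper does this purely cohomologically, via the generator of $H^2(\LL\Spin_n;\Z)$ under the transgression isomorphism $H^3(\Spin_n;\Z)\to H^2(\LL\Spin_n;\Z)$, while you additionally cite the classical geometric identification of the transgressed basic gerbe with the central extension). Your extra care with non-constant loops and the vanishing of the $\pi^*\varrho$ contribution on fibers is a welcome refinement of a point the paper treats implicitly.
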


\begin{proof}
As above we notice that the transgressed differential String class satisfies 
$$
-\ti_\pi(\tau) 
= 
\tau(\widehat{CCS}_\theta(\frac{1}{2}p_1)) - \iota_{\pi}(\tau(\varrho),0).
$$
In particular, the transgressed differential form $\tau(\varrho)$ yields a global section of the transgressed character $\tau(\widehat{CW}_\theta(\frac{1}{2}p_1))$.
Moreover, by naturality of the transgression, the transgressed character $\tau(\widehat q) \in \widehat H^2(\LL P;\Z)$ satisfies $i_{\LL P_\gamma}^*(\tau) = \tau(i_{P_x}^*\widehat q) \in \widehat H^2(\LL \Spin_n;\Z)$.
This follows from the commutative diagram of evaluation maps
\begin{equation*}
\xymatrix{
\LL \Spin_n \times S^1 \ar[r] \ar[d]^{\widehat\pi_!} \ar`u`[rr]^{\ev}[rr] & \LL P \times S^1 \ar^(0.65){\ev}[r] \ar[d]^{\widehat\pi_!} & P \\  
\LL \Spin_n \ar[r] & \LL P &   
}
\end{equation*}
and the identification of the fiber $\LL P_\gamma = \pi^{-1}(\gamma)$ over $\gamma \in \LL X$ with the loop group $\LL \Spin_n$.

Note that transgression to loop space induces an isomorphism on cohomology $\tau:H^3(\Spin_n;\Z) \to H^2(\LL \Spin_n;\Z)$.
By definition, the class $c(\widehat q)$ represents the generator of $H^3(\Spin_n;\Z)$ over any fiber $P_x \approx G$.
Thus $c(\tau(\widehat q))$ represents the generator of $H^2(\LL \Spin_n;\Z)$ over any fiber of the loop bundle.
Hence $\tau(\widehat q)$ represents fiberwise the universal central extension.

In this sense, String classes transgress to Spin classes on the loop bundle, and differential String classes transgress to differential refinements of those. 
\end{proof}

\appendix
\section{Differential characters}\label{app:characters}
In this section we briefly recall the notion of (absolute and relative) differential characters as introduced in \cite{CS83} and \cite{BT06}.
We recall some facts on the relation between absolute and relative characters from \cite{BB13}. 

Let $\varphi:A\to X$ be a smooth map.
Let $Z_*(\varphi;\Z)$ be the group of smooth singular cycles of the mapping cone complex.
Denote the differential of the mapping cone complex by $\partial_\varphi(v,w):=(\partial v + \varphi_*w,-\partial w)$.
Similarly, denote by $\Omega^*(\varphi)$ the mapping cone de Rham complex with differential $d_\varphi(\omega,\vartheta) = (d\omega,\varphi^*\omega - d\vartheta)$.

Let $k \geq 2$.
The group of degree-$k$ \emph{relative} (or mapping cone) \emph{differential charac\-ters} $\widehat H^k(\varphi;\Z)$ is defined as:
\begin{equation*}
\widehat H^k(\varphi;\Z)
:=
\big\{ h \in \Hom(Z_{k-1}(\varphi;\Z),\Ul) \;\big|\; h \circ \partial_\varphi \in \Omega^k(\varphi)\big\}.
\end{equation*}
The notation $h \circ \partial_\varphi \in \Omega^k(\varphi)$ means that there exists a pair of differential forms $(\omega,\vartheta) \in \Omega^k(\varphi)$ such that for any chain $(v,w) \in C_k(\varphi;\Z)$ we have 
\begin{align*}
h(\partial_\varphi(v,w)) 
&= \exp \Big( 2\pi i \int_{(v,w)} (\omega,\vartheta)\Big) \\
&= \exp \Big( 2\pi i \int_v \omega + \int_w \vartheta \Big).
\end{align*}

It turns out that the pair of forms $(\omega,\vartheta) \in \Omega^k(\varphi):=\Omega^k(X) \times \Omega^{k-1}(A)$ is uniquely determined by the character $h$.
Moreover, it is $d_\varphi$-closed and has integral periods.
We call $\omega =: \curv(h)$ the \emph{curvature} of the character $h$ and $\vartheta =: \cov(h)$ its \emph{covariant derivative}.
We also have a homomorphism $c:\widehat H^k(\varphi;\Z) \to H^k(\varphi;\Z)$, called \emph{characteristic class}.

The group $\widehat H^k(X;\Z)$ of absolute differential characters on $X$ is obtained as above by replacing the mapping cone complexes by the smooth singular and the de Rham complex of $X$.
A character $h \in \widehat H^k(X;\Z)$ then has a characteristic class in $c(h) \in \widehat H^k(X;\Z)$ in integral cohomology and a curvature $\curv(h) \in \Omega^k_0(X)$ in the space of closed $k$-forms with integral periods.

In \cite[Ch.~8]{BB13} we establish a 1-1 correspondence between $\widehat H^2(\varphi;\Z)$ and the group of isomorphism classes of hermitean line bundles with connection and section along $\varphi$.
By a  section we mean a nowhere vanishing section or a section of the associated $\Ul$-bundle.
The curvature of a character corresponds to the (normalized) curvature form of the line bundle.
Its covariant derivative corresponds to the covariant derivative of the section.
Hence the name.
The characteristic class corresponds to the first Chern class of the line bundle. 
Similarly, $\widehat H^2(X;\Z)$ corresponds to the group of isomorphism classes of hermitean line bundles with connection.

The group $\widehat H^k(\varphi;\Z)$ fits into the following commutative diagram of short exact sequences: 

\begin{equation}
\xymatrix@C=30pt{
& 0 \ar[d] & 0 \ar[d] & 0 \ar[d] & \\
0 \ar[r] & \frac{H^{k-1}(\varphi;\R)_{\phantom{\R}}}{H^{k-1}(\varphi;\Z)_\R} \ar[d] \ar[r] 
  & \frac{\Omega^{k-1}(\varphi)}{\Omega^{k-1}_0(\varphi)} \ar[d]^{\iota} \ar[r]^d & d\Omega^{k-1}(\varphi) \ar[d] \ar[r] & 0 \\
0 \ar[r] & H^{k-1}(\varphi;\Ul) \ar_p[d] \ar[r]^(0.55)j & \widehat H^k(\varphi;\Z) \ar[d]^c \ar[r]^{(\curv,\cov)} 
  & \Omega^k_0(\varphi) \ar[d] \ar[r] & 0 \\
0 \ar[r] & \Tor(H^k(\varphi;\Z)) \ar[d] \ar[r]_(0.55){j_{\Tor}} & H^k(\varphi;\Z) \ar[d] \ar[r] 
  & H^k(\varphi;\Z)_{\R} \ar[d] \ar[r] & 0 \\
& 0 & 0 & 0 &
}
\label{eq:short_ex_sequ}
\end{equation}
Here $\Omega^k_0(\varphi)$ denotes the space of pairs of forms which are $d_\varphi$-closed and have integral periods.
The homomorphism 
$$
\iota:\Omega^{k-1}(\varphi) \to \widehat H^k(\varphi;\Z),
\quad 
(\mu,\nu) \mapsto \Big(\,(s,t) \mapsto \exp\big(2\pi i \int_{(s,t)}(\mu,\nu)\,\big)\,\Big),
$$ 
is called \emph{topological trivialization}.
The Kronecker product defines the homomorphism 
$$
j:H^{k-1}(\varphi;\Z) \to \widehat H^k(\varphi;\Z), \quad w \mapsto \big(\,(s,t) \mapsto \langle w,[s,t] \rangle \,\big), 
$$ 
which we call \emph{inclusion of flat characters}.
Finally, $j_{\Tor}$ denotes the inclusion of the torsion subgroup of $H^k(\varphi;\Z)$.
The left column is built from the coefficient sequence $\Z \hookrightarrow \R \twoheadrightarrow \Ul$ and the canonical identification $\Tor(H^k(\varphi;\Z)) \cong \Ext(H_{k-1}(\varphi;\Z)\Z)$.
It is easy to see that the rows of the above diagram split algebraically.   
In \cite[Ch.~A]{BSS14} we show that they also have (non-canonical) topological splittings, if the real cohomology of $X$ is finite dimensional.

The mapping cone de Rham cohomology class of the curvature and covariant derivative coincides with the image of the characteristic class in real cohomology.
Thus the two middle sequences of \eqref{eq:short_ex_sequ} may be joined to the following exact sequence:
\begin{equation}\label{eq:sequ_R}
\xymatrix{
0 \ar[r] & \frac{H^{k-1}(\varphi;\R)_{\phantom{\R}}}{H^{k-1}(\varphi;\Z)_\R} \ar[r] & \widehat H^k(\varphi;\Z) \ar^{(\curv,\cov,c)}[rr] && R^k(\varphi;\Z) \ar[r] & 0.
}
\end{equation}
Here 
$$
R^k(\varphi;\Z):=\big\{\;((\omega,\vartheta),\tilde u) \in \Omega^k_0(\varphi) \times H^k(\varphi;\Z)\,\big|\, [(\omega,\varphi)]_{dR} = \tilde u_\R \in H^k(\varphi;\R) \;\big\}
$$ 
denotes the set of pairs of differential forms and integral cohomology classes that match in real cohomology.
We also obtain the corresponding short exact sequences for the group of absolute characters.
For details, see \cite{BT06} and \cite[Ch.~8]{BB13}.

Moreover, we have natural homomorphisms between absolute and relative characters.
These fit into the following long exact sequence \cite[Ch.~8]{BB13}: 
\begin{equation}\label{eq:long_ex_sequ}
\xymatrix@C=20pt{
H^{k-2}(X;\Ul) \ar^{\varphi^* \circ j}[r] &
\widehat H^{k-1}(A;\Z) \ar^{\ti_\varphi}[r] & \widehat H^k(\varphi;\Z) \ar^{\vds_{\varphi}}[r] & \widehat H^k(X;\Z) \ar^{\varphi^* \circ c}[r] & H^k(A;\Z) 
} 
\end{equation}
The sequence proceeds by the long exact sequences for smooth singular mapping cone cohomology with $\Ul$-coefficients on the left and with integer coefficients on the right.
In degree $2$, the homomorphism $\vds_\varphi$ corresponds to the forgetful map that ignores sections.

A relative character $h' \in \widehat H^k(\varphi;\Z)$ is called a \emph{section along $\varphi$} of the absolute character $h = \vds_\varphi(h')$.
The sequence \eqref{eq:long_ex_sequ} in particular tells us that a character $h \in \widehat H^k(X;\Z)$ admits sections along $\varphi$ if and only if it is topologically trivial along $\varphi$, i.e.~$\varphi^*c(h)=0$.
Moreover, it is shown in \cite[Ch.~8]{BB13} that global sections are uniquely determined by their covariant derivative, i.e.~we have an isomorphism
\begin{equation}\label{eq:glob_sect}
\cov: \widehat H^k(\id_X;\Z) \xrightarrow{\cong} \Omega^{2k-1}(X).  
\end{equation}
The inverse is given by $\cov^{-1}(\varrho) = \iota_{\id}(\varrho,0)$.

Differential cohomology is not homotopy invariant.
Instead, there is the following homotopy formula \cite[Ex.~56]{BB13}:
for a homotopy $f:[0,1] \times X \to Y$ between smooth maps $f_0,f_1:X \to Y$ and a differential character $h \in \widehat H^k(Y;\Z)$, we have:
\begin{equation}\label{eq:homotopy_abs}
f_1^*h - f_0^*h 
=
\iota \Big( \int_0^1 f_s^*\curv(h) ds \,\Big) \,. 
\end{equation}
Likewise, for a homotopy $(f,g):[0,1] \times (X,A) \to (Y,B)$ between smooth maps $(f_0,g_0),(f_1,g_1): (X,A) \to (Y,B)$, and a relative differential character $h \in \widehat H^k(\psi;\Z)$, where $\psi:B \to Y$, we have \cite[II, Cor.~40]{BB13}:
\begin{equation}\label{eq:homotopy_rel}
(f_1,g_1)^*h - (f_0,g_0)^*h 
=
\iota_\varphi \Big( \int_0^1 f_s^*\curv(h) ds,-\int_0^1 g_s^*\cov(h) ds \Big) \,.
\end{equation}

The graded abelian group $\widehat H^*(X;\Z)$ of absolute differential characters carries a ring structure compatible with the exact sequences in \eqref{eq:short_ex_sequ} and with the wedge product of differential forms and the cup product on singular cohomology.
We derive a nice characterization of the ring structure in \cite[II, Ch.~6]{BB13}. 
Moreover, the graded abelian group $\widehat H^*(\varphi;\Z)$ of relative carries the structure of a right module over the ring $\widehat H^*(X;\Z)$, compatible with the module structures on mapping cone differential forms and mapping cone cohomology, see \cite[II, Ch.~4]{BB13}.

In \cite[I, Ch.~7]{BB13} and \cite[II, Ch.~5]{BB13} we construct fiber integration and transgression maps for absolute and relative differential characters.
Thus on any fiber bundle $\pi:E \to X$ with compact oriented fibers $F$ we obtain fiber integration homomorphisms
\begin{align*}
\widehat\pi_!: \widehat H^k(E;\Z) &\to \widehat H^{k-\dim F}(X;\Z) \\ 
\widehat\pi_!: \widehat H^k(\Phi;\Z) &\to \widehat H^{k-\dim F}(\varphi;\Z).
\end{align*}
Here $\varphi:A \to X$ denotes a smooth map and $\Phi:\varphi^*E \to E$ the induced bundle map.
The fiber integration maps commute with the usual fiber integrations on differential forms and cohomology and with the homomorphisms in the short exact sequences \eqref{eq:short_ex_sequ} and the long exact sequence \eqref{eq:long_ex_sequ}.

Transgression to the free loop space $\LL X:= \{\mbox{$\gamma: S^1 \to X$ smooth} \}$ is defined via pull-back by the evaluation map $\ev:\LL X \times S^1 \to X$, $(\gamma,t) \mapsto \gamma(t)$, and fiber integration in the trivial bundle:
$$
\tau: \widehat H^k(X;\Z) \to \widehat H^{k-1}(\LL X;\Z), \quad h \mapsto \widehat\pi_!(\ev^*h).
$$ 
Likewise, transgression to the free loop space is defined for relative or mapping cone characters:
For a smooth map $\varphi:A \to X$ let $\overline{\varphi}: \LL A \to \LL X$, $\gamma \mapsto \varphi \circ \gamma$, be the induced map of loop spaces.
Then we have the transgression $\tau: \widehat H^k(\varphi;\Z) \to \widehat H^{k-1}(\overline{\varphi};\Z)$, $h \mapsto \widehat\pi_!(\ev^*h)$.
Here $\LL(X,A)$ denotes the set of pairs of smooth maps $(f,g):S^1 \to (X,A)$ such that $\varphi \circ g = f$ and $\ev: \LL(X,A) \times S^1 \to (X,A)$ and $\LL(X,A)$, $((f,g),t) \mapsto (f(t),g(t))$ denotes the evaluation map.
Moreover, the transgression maps for absolute and relative characters commute with the maps in the exact sequence \eqref{eq:long_ex_sequ}.
For details, see \cite[II, Ch.~5]{BB13}.

\section{Bundle $2$-gerbes}\label{app:bundle_gerbes}
In \cite[II, Sect.~3.2.2]{BB13} we show that a bundle gerbe with connection $\mathcal G$, represented by a submersion $\pi:Y \to X$, defines a relative differential character $h_{\mathcal G} \in \widehat H^3(\pi;\Z)$.
Moreover, we have $(\curv,\cov)(h_{\mathcal G}) = (H,B)(\mathcal G)$, i.e.~the curvature and covariant derivative of the character $h_{\mathcal G}$ coincide with the curvature and curving of the bundle gerbe $\mathcal G$.
The image of the relative character under the map $\vds:\widehat H^3(\pi;\Z) \to \widehat H^3(X;\Z)$ coincides with the stable isomorphism class of the bundle gerbe.

In this section, we describe the analogous statement for bundle $2$-gerbes with connection.\footnote{For more details on bundle $2$-gerbes with connection and their trivializations see \cite{S04,W13}.}
As a particular instance of this fact, we conclude that for any principal $\Spin_n$-bundle with connection $\pi:(P,\theta) \to X$ the Cheeger-Simons bundle $2$-gerbe with respect to the connection $\theta$ represents the Cheeger-Chern-Simons character $\widehat{CCS}_\theta(\frac{1}{2} p_1) \in \widehat H^4(\pi;\Z)$.
This in turn implies that differential $\String$ structures in the sense of \cite{W13} represent differential $\String$ classes in our sense.

Recall that a bundle $2$-gerbe with connection $\mathcal G$, represented by a submersion $\pi:Y \to X$, consists of a bundle gerbe with connection $\mathcal P \to Y^{[2]}$ and a $3$-form $B \in \Omega^3(Y)$, subject to several compatibility conditions for tensor products of pull-backs to the various higher fiber products.
The $3$-form $B$ is called the \emph{curving} of the connection on $\mathcal G$. 
Moreover, the  connection of the bundle $2$-gerbe has a \emph{curvature} $4$-form $H \in \Omega_0^4(X)$.
The curvature and curving are related by $\pi^*H = dB$.
The \emph{characteristic class} of a bundle $2$-gerbe is a cohomology class $CC(\mathcal G) \in H^4(X;\Z)$.

A \emph{trivialization with connection} of a bundle $2$-gerbe with connection is a bundle gerbe with connection $\mathcal S$ over $Y$, subject to several compatibility conditions for tensor products of pull-backs to the various higher fiber products.
In particular, any trivialization with connection comes together with a uniquely determined $3$-form $\varrho \in \Omega^3(X)$ such that $\pi^*\varrho = \curv(\mathcal S) + B$ and $d\varrho = H$. 
A bundle $2$-gerbe admits trivializations if and only if its characteristic class vanishes, and any trivialization admits compatible connections.

Now let $\mathcal G$ be a bundle $2$-gerbe with connection, represented by a submersion $\pi:Y \to X$.
We define the differential character $h_{\mathcal G} \in \widehat H^4(\pi;\Z)$ as follows:\footnote{We use the notations from \cite{BB13}.}
For a cycle $(s,t) \in Z_3(\pi;\Z)$ choose a geometric relative cycle $(\zeta,\tau) \in \ZZ_3(\pi)$ that represents the homology class of $(s,t)$.
Let $(\zeta,\tau)$ be represented by a smooth map $(S,T) \xrightarrow{(f,g)} (X,Y)$.
Choose a chain $(a,b) \in C_4(\pi;\Z)$ such that $[(s,t)-\partial_\pi(a,b)]_{\partial_\pi S_4} = [\zeta,\tau]_{\partial_\pi S_4}$.
Since $\dim(S)=3$, the pull-back bundle $2$-gerbe $f^*\mathcal G$ is trivial.
Choose a trivialization $\mathcal S$ with compatible connection and $3$-form $\omega \in \Omega^3(S)$.
The map $g:T \to Y$ factors through the induced map $f^*Y \xrightarrow{F}Y$ and thus induces a map $g:T \to f^*Y$. 
Since $\dim(T)=2$, the pull-back bundle gerbe $g^*\mathcal S$ is trivial.
Choose a trivialization with compatible connection and $2$-form $\vartheta \in \Omega^2(T)$. 

Now put:
\begin{align}
h_{\mathcal G}(s,t)
:=
\exp \Big( 2\pi i \Big( \int_{(S,T)} (\varrho,\vartheta) + \int_{(a,b)} (H,B) \Big) \Big).
\end{align}
In the same way as for bundle gerbes with connection \cite[II, Ch.~3]{BB13}, one can show that $h_{\mathcal G}$ is indeed a differential character in $\widehat H^4(\pi;\Z)$ with 
$$
(\curv,\cov)(h_{\mathcal G}) 
=
(H,B).
$$

\section{Transgression}\label{app:transgression}
In this section, we discuss the relation of transgression to loop space with transgression in universal bundles.
We show that the two transgressions agree in singular cohomology.
This is of course well-known.
For convenience of the reader, we give the argument here. 
Most of the material can be found e.g.~in \cite[Ch.~10.3]{DK01}. 

Let $X$ be a smooth manifold and $x_0 \in X$ an arbitrary base point. 
Transgression to the based loop space $\Omega X:= \{\gamma \in \LL X \,|\, \gamma(1) = x_0 \}$ is defined as for the free loop space by pull-back via the evaluation map $\ev:\Omega X \times S^1 \to X$, $(\gamma,t) \mapsto \gamma(t)$ and fiber integration in the trivial bundle:
$$
\tau^0: H^*(X;\Z) \to H^{*}(\Omega X;\Z), \quad u \mapsto \pi_!(\ev^*u).
$$
On the other hand, for any fibration $P \xrightarrow{\pi} X$ with fiber $F$ and contractible total space, we have the transgression defined by
\begin{equation}\label{eq:def_transgr}
\xymatrix@C=20pt{
T_P:H^*(X;\Z) \ar[r]^{\cong} & H^*(X,x_0;\Z) \ar[r]^{\pi^*} & H^*(P,P_{_0};\Z) \ar[r]^{\delta^{-1}} & H^{*}(P_{x_0};\Z). 
}
\end{equation}
The connecting homomorphism $\delta: H^{*}(P_{x_0};\Z) \to H^*(P,P_{x_0};\Z)$ is an isomorphism since $P$ is assumed to be contractible. 

For a topological space $Y$ let $CY:= Y \times [0,1] / Y \times \{0\}$ denote the cone over $Y$ and $SY:= CY/Y \times \{1\}$ the suspension.
Let $p:CY \to SY$ denote the projection. 
The suspension isomorphism $S^*:H^*(SY;\Z) \to H^{*}(Y;\Z)$ is the concatenation of isomorphisms
\begin{equation}\label{eq:susp_iso}
\xymatrix{
H^*(SY;\Z) 
\ar[r]^{p^*} &
H^*(CY,Y;\Z)
\ar[r]^{\delta^{-1}} &
H^{*}(Y;\Z).
}
\end{equation}
The connecting homomorphism $\delta: H^{*}(Y;\Z) \to H^*(CY,Y;\Z)$ is an isomorphism since the cone $CY$ is contractible.

Now let $Y=\Omega X$.
The suspension is a quotient of the trivial fiber bundle with fiber $S^1$.
In other words, $S\Omega X = \Omega X \times S^1/ \Omega X \times \{1\}$.
Let $\pr:\Omega X \times S^1 \to S \Omega X$ denote the projection.
Define a map $\ell:\Omega X \to X$ by $[s,\gamma] \mapsto \gamma(s)$.
Then we have the commutative diagram:
\begin{equation*}
\xymatrix{
\Omega X \times S^1 \ar^\ev[rr] \ar_\pr[d] && X \\
S\Omega X. \ar[urr]_\ell && 
} 
\end{equation*}
Fiber integration $\pi_!$ in smooth singular cohomology is defined by the Leray-Serre spectral sequence, see \cite[p.~482f.]{BH53}.
It is realized by pre-composition of cocycles with the transfer map on smooth singular chains, see \cite[Ch.~4]{BB13}.
This identification yields the commutative diagram:
\begin{equation*}
\xymatrix{
H^*(S\Omega X;\Z) \ar^{S^*}[rr] \ar[d]_{\pr^*} && H^{*}(\Omega X;\Z) \\
H^*(\Omega X \times S^1;\Z). \ar_{\pi_!}[urr]
} 
\end{equation*}
Thus we obtain 
\begin{equation}\label{eq:tauSell}
\tau^0
=
\pi_! \circ \ev^* 
=
\pi_! \circ \pr^* \circ \ell^*
=
S^* \circ \ell^*: H^*(X;\Z) \to H^{*}(\Omega X;\Z). 
\end{equation}
Now let $\PP_0 X := \{\mbox{$\gamma:[0,1] \to X$ smooth, $\gamma(0)=x_0$ } \}$ be the based path space of $X$.
Let $e: \PP_0 X \to X$, $\gamma \mapsto \gamma(1)$, be the path fibration with fiber $\Omega X$.
Since $\PP_0 X$ is contractible we obtain from \eqref{eq:def_transgr} the transgression $T_{\PP_0}:H^*(X;\Z) \to H^{*}(\Omega X;\Z)$ for the path fibration.

We have a map of pairs $g:(C\Omega X,\Omega X) \to (\PP_0 X,\Omega X)$ induced by the map $g:C\Omega X \to \PP_0 X$, $(t,\gamma) \mapsto (s \mapsto \gamma(st))$.
Since $C\Omega X$ and $\PP_0 X$ are contractible total spaces of fibrations with the same fiber, the induced map on cohomology is an isomorphism due to the five lemma.
This yields the commutative diagram of isomorphisms:
\begin{equation}\label{eq:pelleg}
\xymatrix{
H^*(\PP_0 X,\Omega X;\Z) \ar^{g^*}[rr] &&  H^*(C\Omega X,\Omega X;\Z) \\
& H^{*}(\Omega X;\Z) \ar[ur]_{\delta} \ar[ul]^{\delta} & 
}
\end{equation}
Moreover, we have the commutative diagram
\begin{equation}\label{eq:egpell}
\xymatrix{
C\Omega X \ar^{e \circ g}[rr] \ar[dr]_p && X \\
& S\Omega X \ar[ur]_\ell &
}
\end{equation}
This yields:
$$
\tau^0
\stackrel{\eqref{eq:tauSell}}{=}
S^* \circ \ell^*
\stackrel{\eqref{eq:susp_iso}}{=}
\delta^{-1} \circ p^* \circ \ell^*
\stackrel{\eqref{eq:pelleg}}{=}
\delta^{-1} \circ (e \circ g)^*
\stackrel{\eqref{eq:egpell}}{=}
\delta^{-1} \circ e^*
=
T_{\PP_0}.   
$$
As above, let $\pi_{EG}:EG \to BG$ be a universal principal $G$-bundle, i.e.~a principal $G$-bundle with contractible total space $EG$.
A contraction of $EG$ to $y_0 \in EG$ associates to any point $y \in EG$ a path $\gamma_y:[0,1] \to EG$ with $\gamma_y(0) = y_0$ and $\gamma_y(1) = y$.
Thus $\gamma_y \in \PP_0(EG)$.

Let $x_0:= \pi_{EG}(y_0)$.
Then we obtain a map $H:EG \to \PP_0 BG$, $y \mapsto \pi_{EG} \circ \gamma_y$.
Since $\ev(\pi_{EG} \circ \gamma_y) = \pi_{EG}(\gamma_y(1)) = \pi_{EG}(y)$, the map preserves the fibers.
It is in fact a homotopy equivalence of fibrations from the universal principal $G$-bundle $\pi_{EG}:EG \to BG$ to the path fibration $e:\PP_0 BG \to BG$ over the classifying space $BG$.
In particular, it yields a homotopy equivalence of the fibers and thus isomorphisms $H^*:H^*(\LL_0(BG);\Z) \to H^*(G;\Z)$.
Moreover, as a homotopy equivalence of fibrations with contractible total spaces the map $H$ identifies the transgressions 
$H^* \circ T_{\PP_0} = T$.

Summarizing, we obtain the following identifications of transgressions:
\begin{align*}
\tau^0 
&= S^* \circ \ell^* = T_{\PP_0}: H^*(X;\Z) \to H^{*}(\Omega X;\Z) \\
H^* \circ T_{\PP_0} 
&= T: H^*(BG;\Z) \to H^{*}(G;\Z).  
\end{align*}

%%%%%%%%%%%%%%%%%%%%%%%%%%%%%%%%%%%%%%%%%%%%%%%%%%%%%%%%%%%%%%%%%%%%%%%%%

\end{document}